\theoremstyle{definition}
\newtheorem{theorem}{Theorem}
\newtheorem{example}{Example}
\newtheorem{corollary}{Corollary}
\newtheorem{proposition}{Proposition}
\newtheorem{remark}{Remark}
\newtheorem{definition}{Definition}
\newtheorem{thm-def}{Theorem-Definition}
\newtheorem{lemma}{Lemma}
\pgfplotsset{compat=1.18}
\newcommand{\ceil}[1]{\left\lceil#1\right\rceil}
\DeclareMathOperator{\len}{len}
\newcommand{\ur}{\underline{r}}
\newcommand{\ud}{\underline{d}}
\newcommand{\cA}{\mathcal{A}}
\newcommand{\cV}{\mathcal{V}}
\newcommand{\cE}{\mathcal{E}}
\newcommand{\cF}{\mathcal{F}}
\newcommand{\cG}{\mathcal{G}}
\newcommand{\cO}{\mathcal{O}}
\newcommand{\cQ}{\mathcal{Q}}
\newcommand{\cS}{\mathcal{S}}
\newcommand{\cB}{\mathcal{B}}
\newcommand{\cR}{\mathcal{R}}
\newcommand{\cK}{\mathcal{K}}
\newcommand{\cW}{\mathcal{W}}
\newcommand{\bP}{\mathbb{P}}
\newcommand{\bU}{\mathbf{U}}
\newcommand{\bfS}{\mathbf{S}}
\newcommand{\Z}{\mathbb{Z}}
\newcommand{\T}{\mathbb{T}}
\newcommand{\Quot}{\operatorname{Quot}}
\newcommand{\Bal}{\operatorname{Bal}}
\newcommand{\FQuot}{\operatorname{FQuot}}
\newcommand{\univ}{\operatorname{univ}}
\newcommand{\Quott}{\Quot^{r,d}_{\bP^1}(\cO^{\oplus N})}
\newcommand{\Spec}{\operatorname{Spec}}
\newcommand{\Pic}{\operatorname{Pic}}
\newcommand{\HN}{\operatorname{HN}}
\newcommand{\Sec}{\operatorname{Sec}}
\newcommand{\Ext}{\operatorname{Ext}}
\newcommand{\End}{\operatorname{End}}
\newcommand{\Hom}{\operatorname{Hom}}
\newcommand{\id}{\operatorname{id}}
\newcommand{\rk}{\operatorname{rk}}
\newcommand{\Gr}{\operatorname{Gr}}
\newcommand{\Subsheaves}{\operatorname{Subsheaves}}
\newcommand{\codim}{\operatorname{codim}}
\newcommand{\pvec}[1]{\vec{#1}\mkern2mu\vphantom{#1}}
\newcommand{\splitt}{\overline{\Sigma}_{\vec{e}}}
\newcommand{\nyesplitt}{\nye{\Sigma}_{\vec{e}}}
\newcommand{\Splitt}{\mathbf{\overline{\Sigma}}_{\vec{e}}}
\newcommand{\SplittOpenF}{\mathbf{\Sigma}_{\vec{f}}}
\newcommand{\SplittNye}{\mathbf{\nye{\Sigma}}_{\vec{e}}}
\newcommand{\SplittNyeI}{\mathbf{\nye{\Sigma}}_{\vec{e}, I}}
\newcommand{\SplittOpen}{\mathbf{\Sigma}_{\vec{e}}}
\newcommand{\SplittNyeHN}{\mathbf{\nye{\Sigma}}_{\vec{e},\HN}}
\newcommand{\vbst}{\cB_{r,d}}
\newcommand{\nye}[1]{\widetilde{#1}}
\newcommand{\Aut}{\operatorname{Aut}}
\newcommand{\vectwo}[2]
{\begin{bmatrix} #1 \\ #2 \end{bmatrix}}
\newcommand{\cEnd}{\mathcal{E}nd}
\newcommand{\cHom}{\mathcal{H}om}
      \string\usetikzlibrary{decorations.markings} to use arrows with markings}{}}{}%
\newsavebox{\pullback}
\sbox\pullback{%
\begin{tikzpicture}%
\draw (0,0) -- (1ex,0ex);%
\draw (1ex,0ex) -- (1ex,1ex);%
\end{tikzpicture}}
\title{Resolving the singularities of splitting loci} 
\author{Feiyang Lin}
\begin{document}

\begin{abstract}
We construct modular resolutions of singularities for splitting loci, and use them to show that tame splitting loci have rational singularities. As a corollary of our results and Hurwitz-Brill-Noether theory, we prove that if $C$ is a general $k$-gonal curve, the components of $W^r_d(C)$ have rational singularities. We also recover the classical Gieseker-Petri theorem. Along the way, we prove a cohomology vanishing statement for certain tautological vector bundles on $\Quott$, which may be of independent interest.
\end{abstract}

\maketitle

\tableofcontents

\section{Introduction} 
A vector bundle on a scheme $B \times \bP^1$ leads to a stratification of the base $B$ according to how the vector bundle splits when it is restricted to each fiber. The strata that arise are called splitting loci. The aim of this paper is to produce a modular resolution of singularities for these strata and use the resolution to study the singularities of splitting loci in the case of the universal family. 

Let $\vbst$ be the moduli stack of rank $r$, degree $d$ vector bundles on $\bP^1$. The data of how a vector bundle of rank $r$ and degree $d$ splits is captured by a splitting type $\vec{e} = (e_1, \dots, e_r)$, where $e_1 \leq e_2 \leq \cdots \leq e_r$ and $\sum_{i=1}^r e_i = d$. We define $\SplittOpen$ to be the locally closed substack in $\vbst$ parametrizing families of vector bundles each with splitting type $\vec{e}$, and define $\Splitt$ as the closure of $\SplittOpen$ in $\vbst$. For convenience, we write $\cO(\vec{e}) \coloneq \cO(e_1) \oplus \cdots \oplus \cO(e_r)$.

\begin{definition}
A splitting type $\vec{e} = (e_1, \dots, e_r)$ is balanced if $|e_r - e_1| \leq 1$, or equivalently if $h^1 \cEnd(\cO(\vec{e})) = 0$. If there exists some $1 \leq k \leq r$ such that $\vec{f} = (e_1, \dots, e_k)$ and $\vec{g} = (e_{k+1}, \dots, e_r)$ are both balanced, then  we say that the splitting type $\vec{e}$ is tame.
\end{definition}

For example, all rank two splitting types are tame. $(1,2,5,6)$ is tame, but $(0,2,4)$ is not. Recall that a scheme $X$ has rational singularities if there exists a resolution of singularities $\rho: \nye{X} \to X$ such that $R^{> 0} \rho_* \cO_{\nye{X}} = 0$ and the natural map $\cO_X \to \rho_* \cO_{\nye{X}}$ is an isomorphism. One checks that having rational singularities is a smooth-local property, in the sense of \cite[\href{https://stacks.math.columbia.edu/tag/0348}{Tag 0348}]{stacks-project}. So it makes sense to say that an algebraic stack has rational singularities.

Our main theorem is the following.
\begin{theorem} \label{thm:rational} Let $k$ be an algebraically closed field of characteristic $0$. Over $\Spec k$, for tame splitting types $\vec{e}$, $\Splitt$ has rational singularities.
\end{theorem}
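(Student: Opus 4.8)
The plan is to produce a modular resolution $\rho\colon\SplittNye\to\Splitt$ and to prove it is a \emph{rational} resolution, that is, that $R^{>0}\rho_*\cO_{\SplittNye}=0$ and that the natural map $\cO_{\Splitt}\to\rho_*\cO_{\SplittNye}$ is an isomorphism; in characteristic $0$ these two conditions are equivalent to $\Splitt$ having rational singularities. Write $\vec e=\vec f\sqcup\vec g$ with $\vec f=(e_1,\dots,e_k)$ and $\vec g=(e_{k+1},\dots,e_r)$ both balanced. If $\vec e$ is itself balanced then $\Splitt=\vbst$ is smooth and there is nothing to prove; otherwise, after re-choosing the tame split if necessary, we may assume $e_k<e_{k+1}$, so that $\Hom(\cO(\vec g),\cO(\vec f))=0$. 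The resolution should record, beyond $\cE$, the subbundle $\cG\subseteq\cE$ of fiberwise splitting type $\vec g$, with quotient of type $\preceq\vec f$; for a generic $\cE$ of type $\vec e$ this subbundle is unique and rigid by the $\Hom$-vanishing, so $\rho$ is birational onto $\Splitt$. Tameness is exactly what makes this a single ``one-step'' incidence construction rather than an iterated flag, and lets us arrange $\SplittNye$ to be smooth --- both graded pieces $\cO(\vec f),\cO(\vec g)$ being rigid --- once the construction is suitably compactified so that $\rho$ is proper.

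Since having rational singularities is smooth-local, it is enough to verify the two vanishings after a smooth base change, and for this we pass to the Quot scheme. For $N\gg0$ there is a smooth open substack $\bU\subseteq\Quott$ --- the locus where the universal quotient $\cE^{\univ}$ is a vector bundle, globally generated and with fiberwise $h^1=0$ --- and $\bU\to\vbst$ is smooth and surjective. Let $\overline{\bU}_{\vec e}\subseteq\bU$ be the preimage of $\Splitt$ and $\nye{\bU}_{\vec e}$ the preimage of $\SplittNye$; then $\rho$ base-changes to a proper birational morphism of quasi-projective schemes $\nye{\bU}_{\vec e}\to\overline{\bU}_{\vec e}$ with smooth source, and it suffices to show $R^{>0}\rho_*\cO_{\nye{\bU}_{\vec e}}=0$ and $\rho_*\cO_{\nye{\bU}_{\vec e}}=\cO_{\overline{\bU}_{\vec e}}$. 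On $\bU$, the balanced subbundle $\cG$ is globally generated with $H^1=0$ and is therefore recovered from the subspace $H^0(\cG)\subseteq H^0(\cE^{\univ})$; hence $\nye{\bU}_{\vec e}$ embeds into a relative Grassmannian bundle $\pi\colon\mathbf{Gr}\to\bU$ parametrizing, over $u\in\bU$, the subspaces of $H^0(\cE^{\univ}_u)$ of dimension $h^0(\cO(\vec g))$, cut out by the condition that the tautological subspace generate a subbundle of $\cE^{\univ}$ of type $\vec g$.

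One then computes $R\rho_*\cO_{\nye{\bU}_{\vec e}}$ by pushing forward along $\nye{\bU}_{\vec e}\hookrightarrow\mathbf{Gr}\xrightarrow{\ \pi\ }\bU$: resolve the structure sheaf of $\nye{\bU}_{\vec e}$ inside $\mathbf{Gr}$ by a Koszul-type (Eagon--Northcott-type) complex built from pullbacks of tautological bundles, and push that complex down. This is Kempf's method for proving that degeneracy loci have rational singularities, but here the base $\bU$ is not an affine or projective space: after pushing the complex down along the Grassmannian-bundle direction $\pi$, which by the Borel--Weil--Bott theorem (valid in characteristic $0$) again produces tautological bundles, one is left with a complex of Schur functors of tautological bundles on $\Quott$, and the cohomological input needed to conclude --- that these bundles have vanishing cohomology in the relevant range, and, relatedly, that the ambient degeneracy model has the expected codimension so that the complex really is a resolution --- is precisely the cohomology-vanishing statement we establish separately. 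Granting it, the complex collapses to yield $R^{>0}\rho_*\cO_{\nye{\bU}_{\vec e}}=0$ and $\rho_*\cO_{\nye{\bU}_{\vec e}}=\cO_{\overline{\bU}_{\vec e}}$, which shows that $\overline{\bU}_{\vec e}$, and hence $\Splitt$, has rational singularities (in particular is normal, and Cohen--Macaulay since we are over a field of characteristic $0$), completing the proof.

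I expect the decisive obstacle to be the cohomology vanishing for tautological bundles on $\Quott$: it has no classical analogue, and the tameness hypothesis must be used to keep the finite list of bundles appearing in the Eagon--Northcott-type resolution under control --- for a non-tame type one would instead face an intersection of several degeneracy conditions and the single-complex argument would break down. A secondary, more bookkeeping-heavy difficulty is constructing the right proper birational smooth model $\SplittNye$ in the first place, and verifying that $\nye{\bU}_{\vec e}$ is cut out in $\mathbf{Gr}$ in the expected codimension so that the Koszul-type complex is exact.
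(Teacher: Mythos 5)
The overall architecture you propose --- a one--step Quot--type resolution for tame $\vec e$, reduction to a smooth cover, a Kempf--style Koszul resolution in an ambient bundle, Borel--Weil--Bott, and a cohomology--vanishing input for tautological bundles on Quot schemes --- is the paper's strategy, and your identification of the Quot--scheme vanishing as the decisive obstacle is correct. But the middle of your argument, where the ambient model is chosen, has a genuine gap. You embed $\nye{\bU}_{\vec e}$ into a \emph{single} relative Grassmannian bundle $\mathbf{Gr}\to\bU$ of subspaces of $H^0(\cE^{\univ}_u)$ and hope it is cut out by a Koszul/Eagon--Northcott complex. First, the condition ``the tautological subspace generates a subbundle of type $\vec g$'' is only locally closed, and at boundary points of the resolution the subsheaf degenerates to a worse splitting type and is no longer recovered from its sections, so the map to $\mathbf{Gr}$ does not extend to a closed embedding without further twisting. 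More seriously, even after twisting (Grothendieck's embedding of a Quot scheme into a Grassmannian), the image is \emph{not} the zero locus of a section of a vector bundle: Str{\o}mme needs a product of \emph{two} Grassmannians to realize $\Quott$ itself as such a zero locus, and a single Grassmannian factor gives no Koszul resolution to push down. The paper avoids this by taking as ambient space the relative Quot bundle $F=\Quot^{r_1,d_1}(\pi^*\pi_*\cE)$ over the smooth base $B$, where $\nyesplitt$ genuinely is the vanishing locus of a section of a vector bundle of the right rank (this is the content of the local--complete--intersection proposition, which rests on the stratified dimension count), and reserves the two--Grassmannian embedding for the separate, fiberwise task of proving the vanishing theorem on $\Quott$.

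This architectural slip also explains why your final step does not cohere: in your setup the pushforward along $\pi:\mathbf{Gr}\to\bU$ is governed by \emph{relative} Borel--Weil--Bott fiber by fiber over $\bU$, so what you would need is a degree bound on which Weyl chamber each weight lands in --- the global cohomology $H^*(\Quott,\bfS_\lambda\cE_m)$ never enters. In the paper's setup it enters precisely because the fibers of $F\to B$ are $\Quott$ and $R^k\gamma_*$ of each Koszul term is computed by cohomology and base change from $H^k$ of those fibers; that is where $H^{>|\mu|}(\Quott,\bfS_\mu\cE_1)=0$ is used. Two further points: the identification $\rho_*\cO_{\nyesplitt}=\cO_{\splitt}$ does not fall out of termwise vanishing of the complex --- the paper obtains it from the normality of splitting loci (Larson--Larson--Vogt), which you would need to quote or reprove; and tameness is not what ``keeps the list of bundles in the resolution under control'' --- the vanishing theorem holds for arbitrary Schur functors --- rather, tameness is exactly what guarantees a one--step ($|I|\le 1$) admissible flag, hence a smooth one--step Quot resolution to which the Kempf argument can be applied at all.
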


Combined with Hurwitz-Brill-Noether theory \cite{Larson2021}, \cite{LLV}, our result implies the following corollary.
\begin{corollary} Let $C$ be a general $k$-gonal curve. Then components of $W^r_d(C)$ have rational singularities.
\end{corollary}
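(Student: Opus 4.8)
The plan is to pull the statement back to the moduli stack along a pushforward map and then invoke Theorem~\ref{thm:rational}. Fix a general $k$-gonal curve $C$ of genus $g$ together with its degree-$k$ map $f\colon C\to\bP^1$, and set $n=d+1-g-k$, so that $f_*L$ has rank $k$ and degree $n$ for every $L\in\Pic^d(C)$. Pushing the Poincar\'e bundle forward along $\id\times f$ yields a family of rank-$k$, degree-$n$ bundles on $\bP^1$ over $\Pic^d(C)$, hence a morphism $\mu\colon\Pic^d(C)\to\cB_{k,n}$. From Hurwitz--Brill--Noether theory \cite{Larson2021},\cite{LLV} I would import the following: $\mu$ is smooth (pushing forward a deformation of a line bundle along the finite map $f$ is unobstructed and surjective on tangent spaces); $\mu^{-1}(\SplittOpen)=W^{\vec e}(C)$, which, as $\mu$ is flat, is dense in $\overline{W^{\vec e}}(C):=\mu^{-1}(\Splitt)$; and for $C$ general $\overline{W^{\vec e}}(C)$ is irreducible. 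Since $f$ is finite, $h^0(C,L)=h^0(\bP^1,f_*L)=h^0(\cO(\vec e))$ for $[L]\in W^{\vec e}(C)$, so we obtain a set-theoretic decomposition $W^r_d(C)=\bigcup_{\vec e\in S}\overline{W^{\vec e}}(C)$, where $S$ is the set of rank-$k$, degree-$n$ splitting types $\vec e$ with $h^0(\cO(\vec e))\ge r+1$.

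Next I would identify the components. Order splitting types by $\pvec{e}'\le\vec e$ iff $\mathbf{\Sigma}_{\pvec{e}'}\subseteq\Splitt$, equivalently iff every partial sum of $\pvec{e}'$ is at most the corresponding partial sum of $\vec e$; then $\overline{W^{\pvec{e}'}}(C)\subseteq\overline{W^{\vec e}}(C)$ for $\pvec{e}'\le\vec e$, and $h^0$ is nondecreasing along specialization. A standard argument using these facts together with the irreducibility above and the disjointness of the open strata $W^{\vec e}(C)$ then shows that the irreducible components of $W^r_d(C)$ are exactly the nonempty $\overline{W^{\vec e}}(C)$ for $\vec e$ maximal in $S$; alternatively one may simply quote the explicit list of components from \cite{LLV}.

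The crux is then the combinatorial claim that every $\vec e=(e_1\le\cdots\le e_k)$ maximal in $S$ is tame. Let $a=\#\{i:e_i<0\}$. If $(e_1,\dots,e_a)$ were not balanced, then replacing it by its balancing (and leaving $e_{a+1},\dots,e_k$ fixed) would give a strictly larger splitting type in the order — rebalancing a contiguous block only raises partial sums, and the result is still sorted since the rebalanced negative entries stay $\le-1$ — while leaving every one of the first $a$ entries $\le-1$, so that $h^0(\cO(\vec e))$ is unchanged; this contradicts maximality, so $(e_1,\dots,e_a)$ is balanced. Similarly $(e_{a+1},\dots,e_k)$ must be balanced, since rebalancing nonnegative entries keeps them nonnegative and does not change $h^0$. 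Thus $\vec e$ is a concatenation of two balanced blocks, split at position $a$, so it is tame; the edge cases $a=0$ (where $\vec e$ is itself balanced, hence tame) and $a=k$ (impossible, since then $h^0(\cO(\vec e))=0<r+1$) cause no trouble.

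Finally, for these tame $\vec e$, $\Splitt$ has rational singularities by Theorem~\ref{thm:rational}. Since $\mu$ is smooth, the base change $\overline{W^{\vec e}}(C)=\mu^{-1}(\Splitt)\to\Splitt$ is smooth, and as having rational singularities is a smooth-local property \cite[\href{https://stacks.math.columbia.edu/tag/0348}{Tag 0348}]{stacks-project}, $\overline{W^{\vec e}}(C)$ — and hence every component of $W^r_d(C)$ — has rational singularities. I expect the principal obstacle to be assembling the Hurwitz--Brill--Noether inputs in precisely the form needed, in particular the smoothness of $\mu$ and the identification $\mu^{-1}(\Splitt)=\overline{W^{\vec e}}(C)$ together with the description of the components of $W^r_d(C)$; the tameness verification is elementary, but it is exactly where the hypothesis of Theorem~\ref{thm:rational} enters, so the combinatorics must be done with care.
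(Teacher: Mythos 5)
Your proposal is correct and follows essentially the same route as the paper: pull everything back along the smooth pushforward morphism $\Pic_d(C)\to\cB_{k,d-g+1-k}$ from Hurwitz--Brill--Noether theory, identify the components of $W^r_d(C)$ with the maximal splitting types having at least $r+1$ sections, observe these are tame, and conclude via Theorem~\ref{thm:rational} and the smooth-locality of rational singularities. The only substantive addition is your explicit rebalancing argument for tameness of the maximal splitting types (splitting at the number of negative entries), which the paper asserts without proof by citing the references; that verification is sound.
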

\begin{proof}
\cite{Larson2021} proves that for a general degree $k$ cover $f: C \to \bP^1$, the morphism $\Pic_d(C) \to \cB_{k, d-g+1-k}$ induced by pushing forward line bundles along $f$ is smooth. The same paper plus the work of \cite{Coppens2025} prove that the components of $W^r_d(C)$ correspond to the maximal splitting types with at least $r+1$ sections, which are tame.
\end{proof}

The construction of a modular resolution of singularities for $\Splitt$ is essential to the proof of Theorem \ref{thm:rational}. To describe the resolution, for $\ur = (r_1, \dots, r_m), \ud = (d_1, \dots, d_m)$, let us define the hyper-Quot stack
$\FQuot(\vbst, \cE_{\univ}; \ur, \ud) \to \vbst$ to be the algebraic stack representing the following functor: 
\begin{align*}
  \left (T \to \Spec k \right ) 
  & \mapsto 
  \left \{
    \begin{tabular}{lll}
      \text{A vector bundle $\cE$ of rank $r$ and fiberwise degree $d$  } \\
      \text{and a flag of subsheaves $\cE_1 \hookrightarrow \cE_2 \hookrightarrow \dots \hookrightarrow \cE_{m+1} = \cE$ on $T \times \bP^1$ } \\
      \text{such that $\cE_{i}$ is locally free of rank $r-r_i$ and fiberwise} \\
      \text{degree $d-d_i$, and each quotient $\cE/\cE_i$ is flat over $T$}
    \end{tabular}
  \right\} / \sim. 
\end{align*}

\begin{theorem} 
\label{thm:smooth1}
Let $E = \cO(\vec{e})$ be a vector bundle on $\bP^1$, and let 
\[
  E_1 \subset E_2 \subset \cdots \subset E_{m+1} = E
\]
be a sequence of subbundles such that for all $1 \leq i \leq m$, each quotient sheaf $E_{i+1}/E_i$ is a nonzero balanced vector bundle, and up to automorphisms of $E_i$, the inclusion $E_i \subset E_{i+1}$ is the unique injective map $E_i \hookrightarrow E_{i+1}$. Let $\ur = (r_1, \dots, r_m), \ud = (d_1, \dots, d_m)$ be the ranks and degrees of $E/E_i$. Then $\FQuot(\vbst,\cE_{\univ}; \ur, \ud)$ is a smooth and irreducible algebraic stack proper over $\vbst$. It maps surjectively onto $\Splitt$ and it is an isomorphism over the dense open $\SplittOpen$.
\end{theorem}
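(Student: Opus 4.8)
The plan is to establish each claimed property of $\FQuot(\vbst, \cE_{\univ}; \ur, \ud)$ in turn, working relative to $\vbst$ wherever possible. First I would record that the hyper-Quot stack is an algebraic stack proper over $\vbst$: for a fixed base scheme $T$, the subsheaf data is parametrized by an iterated Quot scheme construction (a nested/flag Quot scheme), and Grothendieck's existence and projectivity results for Quot schemes of the trivial family give properness; since this is compatible with base change, we get a proper morphism $\FQuot \to \vbst$, hence $\FQuot$ is algebraic. The subtle point is that we want the $\cE_i$ to be \emph{subbundles} with \emph{locally free} quotients of the prescribed rank and degree, which is handled by imposing the flatness of $\cE/\cE_i$ over $T$ in the moduli functor, so this is built into the definition.

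Next I would address smoothness, which I expect to be the technical heart of the argument. The strategy is deformation theory: I would compute the tangent--obstruction theory of $\FQuot \to \vbst$ at a $k$-point corresponding to a flag $E_1 \subset \cdots \subset E_{m+1} = E$ as in the hypothesis. The relative deformations are governed by $\Hom$ and $\Ext^1$ of the flag data; concretely, deforming the inclusion $E_i \hookrightarrow E_{i+1}$ fixing $E_{i+1}$ amounts to sections and obstructions in $\cHom(E_i, E_{i+1}/E_i)$ summed appropriately over the flag, i.e. the obstruction space is built from $H^1(\bP^1, \cHom(E_i, E_{i+1}/E_i))$ and related $H^1$'s of the graded pieces. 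Here is where the hypotheses do all the work: each quotient $E_{i+1}/E_i$ is balanced, and the inclusion $E_i \hookrightarrow E_{i+1}$ is unique up to automorphism --- this rigidity should force the relevant $H^1$'s to vanish, or at least force the tangent space to have the expected dimension, so $\FQuot$ is smooth over $\vbst$, and since $\vbst$ is smooth over $\Spec k$, $\FQuot$ is smooth. I would want to set up the deformation computation carefully as an iterated extension, peeling off one step of the flag at a time and using that $\cHom(E_i, E_{i+1}/E_i)$ has vanishing $H^1$ because of the balanced/uniqueness conditions.

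For irreducibility, having established smoothness, it suffices to show $\FQuot$ is connected, and for this I would argue that the open locus lying over $\SplittOpen$ is dense and irreducible: over $\SplittOpen$ the flag is rigid (the hypotheses are precisely engineered so that over a bundle of splitting type $\vec e$ there is a unique such flag up to the automorphisms already accounted for), so the map $\FQuot \to \vbst$ restricted there is an isomorphism onto $\SplittOpen$, which is irreducible since $\vbst$ is and $\SplittOpen$ is a single splitting stratum. Then properness of $\FQuot \to \vbst$ shows the image is closed and contains $\SplittOpen$, hence contains $\Splitt = \overline{\SplittOpen}$; combined with smoothness and the dense irreducible open, $\FQuot$ is irreducible and its image is exactly $\Splitt$, giving surjectivity onto $\Splitt$. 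The main obstacle, as noted, is the smoothness computation: getting the deformation theory of nested Quot data exactly right and verifying that the balancedness of the successive quotients together with the rigidity of the inclusions kills all obstructions --- I would expect to need the cohomology vanishing $h^1 \cHom(E_i, E_{i+1}/E_i) = 0$, which should follow from the numerical balanced conditions, and to check that the map on deformation spaces $\FQuot \to \vbst$ is smooth by exhibiting surjectivity of the relevant obstruction map rather than just counting dimensions.
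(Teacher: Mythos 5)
Your outline of properness and of the isomorphism over $\SplittOpen$ matches the paper, but the two central steps --- smoothness and irreducibility --- both have genuine gaps, and the first rests on a false claim.

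The claim that $\FQuot(\vbst,\cE_{\univ};\ur,\ud) \to \vbst$ is a \emph{smooth morphism} cannot be right: its image is $\Splitt$, a proper closed substack of $\vbst$ whenever $\vec{e}$ is unbalanced, and the paper's Section 3.3 exhibits fibers over boundary points that are reducible with components of different dimensions, so the map is not even flat onto its image. Concretely, the groups $H^1\cHom(E_i,E_{i+1}/E_i)$ that you hope to kill do \emph{not} vanish under the hypotheses; the balancedness of $E_{i+1}/E_i$ gives $h^1\cEnd(E_{i+1}/E_i)=0$ and the uniqueness of the inclusion gives $\Hom(E_{i+1}/E_i,E_i)=0$, but one has $\sum_i h^1\cHom(E_i,E_{i+1}/E_i) = u(\vec{e})$, which is exactly the (positive) codimension of $\Splitt$. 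For instance with $\vec{e}=(0,2)$ and $E_1=\cO(2)\subset\cO(0,2)$ one gets $\cHom(E_1,E_2/E_1)=\cO(-2)$ with $h^1=1$. What is actually true, and what the paper proves, is that $\FQuot$ is smooth over $\Spec k$: working on the explicit atlas $U_{\pvec{e}'}=H^1\cEnd(\cO(\pvec{e}'))$, one writes a five-term exact sequence computing the \emph{absolute} tangent space at a flag $[\alpha]$ in the fiber over $0$ and checks that its dimension equals $u(\pvec{e}')-u(\vec{e})$, which is the dimension of the (already known to be irreducible) pullback $\nye{U}_{\vec{e},\pvec{e}'}$. This comparison requires knowing the dimension of $\FQuot$ in advance, which is where irreducibility enters.

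Your irreducibility argument is also incomplete for the same structural reason: you assert that the open locus over $\SplittOpen$ is dense in $\FQuot$, but this is precisely the nontrivial point. A priori the hyper-Quot stack could have extra components supported entirely over the boundary $\Splitt\setminus\SplittOpen$, since the fibers there are large. The paper rules this out by embedding each one-step relative Quot into a fiber bundle of Quot schemes $S_{s,d'}(\pi^*\cF(c))$ where it is cut out by a section of a vector bundle $\cR$ (so every component has codimension at most $\rk\cR$), and then stratifying by the pairs of splitting types $(\vec{a},\vec{e})$ of subsheaf and ambient bundle and showing via Lemmas \ref{lem:eb} and \ref{lem:ue-ext-inequality} that exactly one stratum attains codimension $\rk\cR$ while all others exceed it. This codimension count, iterated up the tower of Quot schemes, is what gives irreducibility (and, as a byproduct, the local-complete-intersection statement used later for rational singularities); the logical order is irreducibility first, then the tangent-space computation for smoothness, the reverse of what you propose.
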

Note that the Harder-Narasimhan flag of subbundles of $\cO(\vec{e})$ always satisfies the conditions of Theorem \ref{thm:smooth1}, but for any given $\vec{e}$, there may be more than one choice (see Example \ref{ex:e-admissible-sets}). 

Combined with Hurwitz-Brill-Noether theory, Theorem \ref{thm:smooth1} recovers the classical Gieseker-Petri theorem.

\begin{corollary}[Gieseker-Petri] Let $C$ be a general curve of genus $g$. Then $G^r_d(C)$ is smooth. 
\end{corollary}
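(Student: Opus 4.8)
The plan is to deduce this from Theorem \ref{thm:smooth1} and Hurwitz--Brill--Noether theory, by realizing $G^r_d(C)$ as the pullback of a smooth stack along a smooth morphism. \emph{Reduction to the $k$-gonal case:} a general curve $C$ of genus $g$ has gonality $\gamma = \lfloor (g+3)/2\rfloor$, and the Hurwitz space $\mathcal{H}_{\gamma,g}$ of degree-$\gamma$ covers of $\bP^1$ dominates $M_g$ (compare dimensions: $2g+2\gamma-5 \geq 3g-3$). Thus a general curve of genus $g$ is the source of a general degree-$k$ cover $f\colon C\to\bP^1$ with $k=\gamma$, and it suffices to prove $G^r_d(C)$ is smooth for such $(C,f)$ and all $d,r$. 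Set $\ell = d-g+1-k$; by \cite{Larson2021}, for general $f$ the pushforward morphism $\phi\colon \Pic_d(C)\to\cB_{k,\ell}$, $L\mapsto f_*L$, is smooth.

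Next I would promote the identification ``$W^r_d(C)=\phi^{-1}(\{\cE : h^0(\cE)\geq r+1\})$'' underlying the previous corollary to the level of linear series. Let $\mathbf{G}^r\to\cB_{k,\ell}$ be the stack parametrizing pairs $(\cE,V)$, where $\cE$ is a family of rank $k$, degree $\ell$ bundles on $\bP^1$ and $V$ is an $(r+1)$-dimensional family of subspaces of its spaces of sections --- constructed, via a relative Grassmannian inside the pushforward of $\cE_{\univ}(n)$ to $\cB_{k,\ell}$ for $n\gg0$, exactly as $G^r_d(C)$ is constructed over $\Pic_d(C)$. Since $H^0(C,L)=H^0(\bP^1,f_*L)$ compatibly in families, pullback along $\phi$ identifies $G^r_d(C)$ with the fiber product $\Pic_d(C)\times_{\cB_{k,\ell}}\mathbf{G}^r$; in particular $G^r_d(C)$ is empty (hence smooth) when $\rho = g-(r+1)(g-d+r)<0$. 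As $\phi$ is smooth, it now suffices to show $\mathbf{G}^r$ is smooth.

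For this, given $(\cE,V)$ I would factor the evaluation $V\otimes\cO_{\bP^1}\to\cE$ through the saturation $\cE_1\subseteq\cE$ of its image; the assignment $(\cE,V)\mapsto(\cE_1\subseteq\cE)$ exhibits $\mathbf{G}^r$ as fibered, in relative Grassmannians of sections and Quot spaces of torsion quotients, over $\FQuot(\cB_{k,\ell},\cE_{\univ};\ur,\ud)$ for the rank/degree vector and flag dictated --- via Hurwitz--Brill--Noether theory, \cite{LLV}, \cite{Coppens2025} --- by the maximal splitting type carrying at least $r+1$ sections. The base is smooth by Theorem \ref{thm:smooth1}, and the fibers are smooth by the standard smoothness of the relevant Quot schemes on $\bP^1$ (torsion quotients of a bundle, and quotients of a trivial bundle); hence $\mathbf{G}^r$, and therefore $G^r_d(C)$, is smooth. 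I expect the main obstacle to be this last step: making the dictionary between sub-linear-series on $C$ and flagged bundles on $\bP^1$ into a genuine morphism of stacks, correctly accommodating linear series with base points and bundles $\cE$ that are not globally generated --- the phenomena that force one to remember the saturation flag and torsion data rather than a naive Grassmannian bundle of sections --- while tracking the numerics so that every component of $G^r_d(C)$ comes out smooth of the expected dimension $\rho$.
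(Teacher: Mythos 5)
Your overall strategy---pull back a smooth Quot-type construction over $\cB_{k,\ell}$ along the smooth pushforward map $\Pic_d(C)\to\cB_{k,\ell}$ and invoke Theorem~\ref{thm:smooth1}---is the right one, and the first two reductions (dominance of the Hurwitz space over $M_g$, and the fiber-product description of $G^r_d(C)$ via $H^0(C,L)=H^0(\bP^1,f_*L)$) are sound. But the step you yourself flag as ``the main obstacle'' is a genuine gap, not a technicality, and it is created by your choice $k=\gamma=\lfloor(g+3)/2\rfloor$. The assignment $(\cE,V)\mapsto(\cE_1\subseteq\cE)$ sending $V$ to the saturation of the image of $V\otimes\cO\to\cE$ does not define a morphism to any single $\FQuot(\cB_{k,\ell},\cE_{\univ};\ur,\ud)$: neither the rank nor the degree of that image (nor of its saturation) is locally constant on $\mathbf{G}^r$---on $\bP^1$ linearly independent sections can generate a subsheaf of strictly smaller rank, and the degree depends on the splitting type of $\cE$ and on base points of $V$---so at best you obtain a stratification of $\mathbf{G}^r$ by such flags. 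Smoothness of the strata-wise bases and fibers does not yield smoothness of $\mathbf{G}^r$; you would need the map itself to be smooth, which is exactly what breaks where the image degenerates. Moreover, Theorem~\ref{thm:smooth1} applies only to flags whose successive quotients are balanced and whose inclusions are rigid up to automorphism, hypotheses you never verify for the flags your construction would produce.

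The paper sidesteps all of this by taking $k$ \emph{sufficiently large} rather than equal to the gonality. Then every $L\in W^r_d(C)$ has $f_*L=\cO(\vec{e})$ with all entries of $\vec{e}$ in $\{-2,-1,0\}$ and exactly $h^0(L)$ zero entries. For any $(r+1)$-dimensional $V\subseteq H^0(L)=H^0(f_*L)$, the image of $V\otimes\cO_{\bP^1}\to f_*L$ is then a globally generated subsheaf of rank $r+1$ whose degree is both $\geq 0$ (as a quotient of $\cO^{\oplus(r+1)}$) and $\leq 0$ (as a subsheaf of a bundle with nonpositive splitting type), hence is exactly $\cO^{\oplus(r+1)}$: no saturation, no torsion, no base-point bookkeeping. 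So $G^r_d(C)$ is precisely the base change to $\Pic_d(C)$ of the single one-step Quot construction for $\cO^{\oplus(r+1)}\subseteq\cO(\vec{e}_{\max,r})$, whose quotient $\cO(-2)^{\oplus a}\oplus\cO(-1)^{\oplus b}$ is balanced and whose inclusion is unique up to automorphism because $\Hom(\cO,\cO(-1))=\Hom(\cO,\cO(-2))=0$; Theorem~\ref{thm:smooth1} then applies directly. If you replace your gonality pencil by a pencil of sufficiently large degree, your argument collapses to the paper's and the obstacle you identify disappears.
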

\begin{proof}
Let $k$ be sufficiently large and let $f: C \to \bP^1$ be a general cover of degree $k$ and genus $g$. Then by Hurwitz-Brill-Noether theory, $W^r_d(C) = W^{\vec{e}_{\max, r}}(C)$, where \[\vec{e}_{\max, r} = ((-2)^{g-d+r}, (-1)^{k-g+d-2r-1}, 0^{r+1}),\] and exponents mean repeated entry. Moreover, for any line bundle $L \in W^r_d(C)$, we know that $f_* L = \cO(\vec{e})$, where 
\[\vec{e} = ((-2)^{h^1(L)}, (-1)^{k-h^1(L)-h^0(L)}, 0^{h^0(L)}),\] and $h^0(L) \geq r+1$. So to give an $(r+1)$-dimensional subspace $V \subseteq H^0(C, L)$ is the same as giving an inclusion $\cO^{\oplus (r+1)} \hookrightarrow f_*L$. Therefore, the relative Quot construction corresponding to the one-step inclusion
$
  \cO^{\oplus (r+1)} \subseteq \cO(\vec{e}_{\max, r}),
$
when base-changed to $\Pic_d(C)$, is precisely $G^r_d(C)$. Hence Theorem \ref{thm:smooth1} implies that $G^r_d(C)$ is smooth.
\end{proof}

Tame splitting loci admit a resolution of singularities by a relative Quot construction, and do not require a hyper-Quot construction, which is necessary in general. To show that the higher derived pushforward of the structure sheaf vanishes (and thus $\Splitt$ has rational singularities), we embed this resolution inside a fiber bundle of Quot schemes $\Quott$, where it is cut out by the section of a vector bundle. This reduces the vanishing of higher pushforwards to a cohomology vanishing statement for certain tautological bundles on $\Quott$. While this approach parallels the classical proof that determinantal varieties have rational singularities, the lack of a Borel-Weil-Bott type statement for $\Quot$ schemes presents substantial challenges at this step. To access the desired cohomology vanishing statement, we appeal to Str{\o}mme's embedding of Quot schemes on $\bP^1$ into a product of Grassmannians and further reduce the problem to a cohomology vanishing statement for certain tautological vector bundles on a product of Grassmannians. The desired statement then follows from an application of Borel-Weil-Bott and an involved combinatorial analysis.

Quot schemes on $\bP^1$ were first systematically studied by Str{\o}mme \cite{Stromme1987}, where they showed that $\Quott$ is a nonsingular, irreducible, projective, rational variety. More recently, motivated by analogies to the Hilbert scheme of points on a surface and computations of the quantum cohomology of the Grassmannian, there has been a lot of research towards understanding the cohomology of tautological bundles on Quot schemes on curves, such as \cite{Oprea_Sinha_2022}, \cite{Marian_Oprea_Sam_2023}, \cite{Marian_Negut_2024}, \cite{Sinha_Zhang_2024}. However, most results in the literature focus on punctual Quot schemes (the case where $r = 0$), whereas our result applies to Quot schemes parametrizing higher rank quotients. 

Here is the cohomology vanishing theorem we prove. Let $d > 0$ and $N \geq r \geq 0$. Let $\cQ$ be the universal quotient on $\Quott \times \bP^1$, and let $p: \Quott \times \bP^1 \to \Quott$ be the projection to the first factor. Then for $m \geq -1$, $p_* \cQ(m)$ is a vector bundle on $\Quott$. Let $\cE_m = (p_* \cQ(m))^{\vee}$. Given a partition $\lambda$, we write $\bfS_{\lambda}$ to denote the Schur functor associated to $\lambda$. When $\lambda = (1^k)$, $\bfS_{\lambda} \cE = \wedge^k \cE$; and when $\lambda = (k)$, $\bfS_{\lambda} \cE = \text{Sym}^k \cE$.

\begin{theorem} \label{thm:general_vanishing}
Let $m \geq -1$ be an integer. Let $\lambda_{-1}, \lambda_0, \lambda_1, \dots, \lambda_m$ be partitions, at least one of which is nonempty. Let $D = |\lambda_{-1}| + \cdots + |\lambda_m|$. Then 
\[
  H^{> D}(\Quott, 
  \otimes_{i = -1}^m \bfS_{\lambda_i} \cE_i) = 0.
\]
\end{theorem}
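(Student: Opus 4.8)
The strategy is to reduce the cohomology vanishing on $\Quott$ to a concrete vanishing statement on a product of Grassmannians via Str{\o}mme's embedding. Recall from \cite{Stromme1987} that $\Quott$ embeds as a locally closed subscheme — in fact it can be realized via a monad or as a degeneracy locus — inside a product $\Gr(a, \cO^N \otimes H^0(\cO_{\bP^1}(1))) \times \Gr(b, \cO^N)$ (for suitable $a, b$ depending on $r, d, N$), in such a way that the tautological bundles $p_* \cQ(m)$ on $\Quott$ are expressed, via the Beilinson-type resolution of $\cQ$ on $\bP^1$, as the cohomology of a two-term complex built from the tautological sub/quotient bundles on the two Grassmannian factors. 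Concretely, writing $\cQ$ in terms of its display over $\bP^1$, one gets for each $m \geq -1$ a short exact sequence (or a slightly longer complex that one controls) relating $p_*\cQ(m)$, $p_*\cQ(m-1)$, and pullbacks of the tautological bundles; iterating down to $m = -1$ and $m=0$ expresses every $\cE_m$ as an extension assembled from the two basic tautological bundles on the Grassmannians. The first step, then, is to make this dictionary precise and to rewrite $\otimes_i \bfS_{\lambda_i}\cE_i$ as (a subquotient/filtered piece of) a tensor product of Schur functors applied to the two Grassmannian tautological bundles, tracking how the degree $D = \sum |\lambda_i|$ bounds the total ``size'' of the Schur data that appears.

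Next I would pass from $\Quott$ to the ambient product of Grassmannians $Y = \Gr_1 \times \Gr_2$. Since $\Quott \hookrightarrow Y$ is cut out by the vanishing of a section of an explicit vector bundle $\cW$ on $Y$ (this is exactly Str{\o}mme's description of $\Quott$ as a zero locus, the feature the introduction flags as the analogue of the determinantal-variety argument), there is a Koszul resolution of $\cO_{\Quott}$ by $\wedge^\bullet \cW^\vee$ on $Y$. Tensoring with (the pullback of) our bundle and pushing forward, the cohomology on $\Quott$ is computed by a spectral sequence whose $E_1$-page is $H^*\!\big(Y, \wedge^j \cW^\vee \otimes (\text{Schur functors of tautological bundles})\big)$. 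The plan is to show each such term vanishes in cohomological degree exceeding $D$ — so that the abutment vanishes above $D$ as well. Here Borel-Weil-Bott on each Grassmannian factor is available: the bundles in question are (sums of) $\bfS_\mu(\text{tautological sub}) \otimes \bfS_\nu(\text{tautological quotient})$, and BWB says their cohomology is concentrated in a single degree determined by whether the associated weight is dominant after the $\rho$-shift, with the degree equal to the length of the Weyl group element needed to make it dominant (or the cohomology vanishes entirely).

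The main obstacle — and where the real work lies — is the combinatorial bookkeeping: one must show that for every term surviving in the Koszul/spectral-sequence computation, the sum over the two factors of the Bott-theoretic cohomological degrees is at most $D$. This means bounding, for each Grassmannian factor, the length of the relevant Weyl element in terms of how many boxes of the $\lambda_i$ and of the exterior power $\wedge^j \cW^\vee$ have been ``used.'' The cleanest route is probably an inductive argument on $m$ using the short exact sequences relating consecutive $\cE_m$'s: each time one increments $m$, a new tautological factor enters carrying at most $|\lambda_m|$ boxes, and one checks the cohomological-degree budget increases by at most $|\lambda_m|$. One has to be careful that the Schur functors of the pushforwards $p_*\cQ(m)$ do not behave multiplicatively on the nose — tensoring exact sequences and applying Schur functors only yields filtrations (via the Cauchy/Pieri formulas) — so the argument must be phrased in terms of the associated graded pieces, each of which is a product of Schur functors whose total box count is still $D$. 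I would set up the induction so that the hypothesis is precisely the sharp statement ``$H^{>D'} = 0$ for any product of Schur functors of $\cE_{-1}, \dots, \cE_{m'}$ with total degree $D'$,'' for $m' \le m$, and then feed the $m' = m$ case through the Koszul complex on $Y$ together with Pieri's rule and the explicit Bott-length estimates on the two Grassmannians to get the $m'=m+1$ case. The base case $m = -1$ (respectively the pair $m \in \{-1, 0\}$) is a direct Borel-Weil-Bott computation on a single Grassmannian, essentially the known statement that Schur functors of the tautological quotient bundle on a Grassmannian have cohomology concentrated in degree $0$, with the degree bound $D$ then being automatic.
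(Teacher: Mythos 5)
Your toolkit matches the paper's at a high level (Str{\o}mme's embedding into a product of two Grassmannians, the Koszul resolution, Cauchy/Littlewood--Richardson, Borel--Weil--Bott, and an induction driven by the exact sequences $0 \to \cE_{n+2} \to \cE_{n+1}^{\oplus 2} \to \cE_n \to 0$), but two of your key steps would fail as described. First, your induction runs in the wrong direction. That sequence exhibits $\cE_n$ --- the \emph{smaller} index --- as a cokernel, so the Schur complex on $\Phi_{n+1}\colon \cE_{n+2}\to \cE_{n+1}^{\oplus 2}$ gives a finite \emph{left} resolution of $\bfS_{\lambda}\cE_n$ by Schur functors of the higher-index bundles with total box count preserved, and a left resolution propagates $H^{>D}=0$ from the terms to the resolved bundle because the deeper terms only need vanishing in \emph{higher} degrees. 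Incrementing $m$ as you propose would require a right co-resolution of $\bfS_{\lambda}\cE_{m+1}$ (a sub-object), whose deeper terms must then vanish in degrees \emph{below} $D$ --- strictly stronger than your inductive hypothesis supplies. The paper accordingly takes $m$ large, proves the two-factor case $\bfS_{\alpha}\cE_{m-1}\otimes\bfS_{\beta}\cE_m$ as the base case, and inducts \emph{downward} on the index of the smallest nonempty partition.

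Second, the base case is not ``a direct Borel--Weil--Bott computation with cohomology concentrated in degree $0$.'' The $\cE_i$ are \emph{duals} of pushforwards, so on the Grassmannian factors they restrict to Schur functors of the duals of the tautological quotient bundles, which have abundant higher cohomology; the paper's example with $H^{12}(\cV_{\mu,\alpha,\beta})\neq 0$ on $\Gr(3,9)\times\Gr(5,12)$ shows the bound $D$ is sharp, not automatic. The heart of the proof is the combinatorial comparison --- after pushing Cauchy's formula and Littlewood--Richardson through the Koszul resolution --- between the Bott cohomological degree $jk_2 + \sum_{\ell}(i_\ell - i_{\ell-1})(n_1-k_1-\ell+1)$ and the box count $|\mu|+|\alpha|+|\beta|$ (Propositions \ref{prop:upperboundforcohdeg} and \ref{prop:lowerboundforsumofparts}), which also requires choosing $m$ large enough that $\alpha_{n_1-k_1}=0$. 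Your proposal defers exactly this step to ``bookkeeping,'' so as written it contains the scaffolding but not the proof.
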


\noindent
{\textbf{Acknowledgements.}} 
I thank David Eisenbud and Hannah Larson for many fruitful discussions and insights about this work. Thanks also to  Alina Marian, Martin Olsson, Claudiu Raicu and Shubham Sinha for helpful conversations. Part of this work was conducted while the author was supported by the Jane Street Graduate Research Fellowship, and NSF Grant DMS 2001649. 

\section{Splitting loci and vector bundles on \texorpdfstring{$\bP^1$}{P1}} \label{sec:preliminaries_on_splitting_loci}
In this section, we review some facts about splitting loci that will be useful later.

Recall that $\vbst$ is defined by the following functor:
\[
  \left (T \to \Spec k \right ) \mapsto 
  \left \{
    \begin{tabular}{lll}
      \text{Vector bundle $\cE$ on $T \times \bP^1$, } \\
      \text{such that $\cE_t$ has rank $r$ and} \\
      \text{degree $d$ for all $t \in T$} \\
    \end{tabular}
  \right\}. 
\] 
When we have a scheme $B$ equipped with a vector bundle $\cE$ on $B \times \bP^1$ of rank $r$ and fiberwise degree $d$, this data gives rise to a morphism $\phi_{\cE}: B \to \vbst$. We write $\Sigma_{\vec{e}}(\cE) \subset \splitt(\cE) \subset B$ for the pullback of $\Splitt$ and $\SplittOpen$ along $\phi_{\cE}$. In general, $\splitt(\cE)$ will no longer be the closure of $\Sigma_{\vec{e}}(\cE)$ in $B$, but it still has a nice description in terms of the dominance order between splitting types. Given two splitting types $\vec{e}, \vec{f}$, we say that $\vec{e} \geq \vec{f}$ if $e_1 + \cdots + e_j \geq f_1 + \cdots + f_j$ for each $1 \leq j \leq r$. Equivalently, $\vec{e} \geq \vec{f}$ if $e_j + \cdots + e_r \leq f_j + \cdots + f_r$ for each $1 \leq j \leq r$.

This partial order, also called the dominance order, turns out to capture when $\Splitt \supset \mathbf{\Sigma}_{\vec{f}}$ in $\vbst$, as will be explained in Proposition \ref{prop:equiv_leq}. In terms of this partial order, we have $\splitt(\cE) = \bigcup_{\vec{f} \leq \vec{e}} \Sigma_{\vec{f}}(\cE)$. We write $u(\vec{e}) = h^1 \cEnd(\cO(\vec{e}))$, which is the expected codimension of $\splitt$. Note also that given a fixed rank and degree, there is a unique balanced splitting type, which is maximal with respect to dominance. When the degree is divisible by rank, the unique balanced vector bundle of that rank and degree is called perfectly balanced. This corresponds to a splitting type $\vec{e} = (e_1, \dots, e_r)$ where $e_1 = e_r$. Given a vector bundle $E$ on $\bP^1$, we will write $\mu(E) \coloneq \frac{\deg E}{\rk E}$ for its slope.

\subsection{Closure order on splitting types}
We collect several equivalent conditions describing when one splitting locus lies in the closure of another.
\begin{definition} Let $\vec{e}$, $\vec{f}$ be two splitting types of the same rank and degree. Let the Harder-Narasimhan flag of subbundles of $\cO(\vec{e})$ be $E_1 \subset E_2 \subset \cdots \subset E_m \subset E_{m+1} = \cO(\vec{e})$, and choose a splitting $\cO(\vec{f}) = Q_1 \oplus Q_2 \oplus \cdots \oplus Q_{n+1}$, where each $Q_i$ is a perfectly balanced vector bundle and $\mu(Q_i) < \mu(Q_{i+1})$. We say that $\cO(\vec{f})$ admits a flag of subbundles of type $\vec{e}$ if there exists a flag of subbundles $E_1' \subset E_2' \subset \cdots \subset E_m' \subset E_{m+1}' = \cO(\vec{f})$ such that $\rk E_i' = \rk E_i$ and $\deg E_i' = \deg E_i$. 
An extension of type $\vec{f}$ is a sequence of extensions
$0 \to K_i \to K_{i+1} \to Q_{i+1} \to 0$,
where $1 \leq i \leq n$, and $K_1 = Q_{1}$. We say that $\cO(\vec{e})$ can be realized as an extension of type $\vec{f}$ if there exists an extension of type $\vec{f}$ where $K_{n+1} \cong \cO(\vec{e})$.
\end{definition}
\begin{example}
Let $\vec{f} = (-2,2)$. Then $Q_1 = \cO(2)$, $Q_2 = \cO(-2)$. For $\cO(\vec{e})$ to be realizable as an extension of type $\vec{f}$ means there exists a short exact sequence
  $0 \to \cO(-2) \to \cO(\vec{e}) \to \cO(2) \to 0.$
This occurs if and only if $\vec{e} = (0,0), (-1,1), (-2,2)$.
\end{example}

\begin{lemma} 
\label{lem:subsheaf} 
Let $\vec{e}$, $\vec{f}$ be two splitting types of the same rank and degree. Let $\vec{e} = (\text{Bal}, \vec{a})$, where $\Bal$ is a balanced splitting type with largest summand $m_o$ and $a_1 > m_o$. Then $\vec{f} \leq \vec{e}$ if and only if there exists $\pvec{a}' \leq \vec{a}$ and an inclusion $\cO(\pvec{a}') \hookrightarrow \cO(\vec{f})$.
\end{lemma}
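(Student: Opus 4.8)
The plan is to prove both directions directly, working with the explicit decomposition $\vec{e} = (\Bal, \vec{a})$. Write $\cO(\vec{e}) = \cO(\Bal) \oplus \cO(\vec{a})$, where $\cO(\Bal)$ is the sub-bundle generated by the "low-degree" summands and $\cO(\vec{a}) = \cO(a_1) \oplus \cdots \oplus \cO(a_s)$ collects the summands of degree strictly greater than $m_o$. Note the key cohomological consequences of the hypothesis $a_1 > m_o$: there are no nonzero maps $\cO(a_i) \to \cO(m)$ for any summand $\cO(m)$ of $\cO(\Bal)$ (since $m \le m_o < a_1 \le a_i$), and conversely $\Hom(\cO(m), \cO(a_i)) \ne 0$. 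This asymmetry is what makes $\cO(\Bal)$ behave like a canonical piece.

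**The ``if'' direction.** Suppose $\pvec{a}' \le \vec{a}$ and there is an inclusion $\iota\colon \cO(\pvec{a}') \hookrightarrow \cO(\vec{f})$. I want to conclude $\vec{f} \le \vec{e}$. First I would observe that since $\cO(\Bal)$ has degree $d - |\vec{a}|$ and rank $r - s$ and is balanced, and $\pvec{a}' \le \vec{a}$ means $|\pvec a'| = |\vec a|$ with the partial-sum inequalities, one can form $\cO(\Bal) \oplus \cO(\pvec a')$, which dominates... no — rather, the cleaner route is: from the inclusion $\cO(\pvec a') \hookrightarrow \cO(\vec f)$ we get (taking degrees of saturations / using that a subsheaf of a bundle on $\bP^1$ has a saturation of the same or larger degree with the partial-sum order behaving monotonically) that $\cO(\vec f) \ge \cO(\pvec a') \oplus (\text{quotient part})$. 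I would instead argue more robustly via cohomology: $\vec f \le \vec e$ is equivalent to $h^0(\cO(\vec f) \otimes \cO(\ell)) \le h^0(\cO(\vec e)\otimes \cO(\ell))$ for all $\ell$ (this is the standard Hom-count characterization of dominance, which I would invoke or quickly reprove). Then for each $\ell$, I need $h^0(\cO(\vec f)(\ell)) \le h^0(\cO(\Bal)(\ell)) + h^0(\cO(\vec a)(\ell))$. The inclusion $\cO(\pvec a')\hookrightarrow \cO(\vec f)$ bounds $h^0(\cO(\vec f)(\ell))$ from below by $h^0(\cO(\pvec a')(\ell))$; I'd need to turn the hypothesis around to get an upper bound, so this approach needs the full strength of $\pvec a' \le \vec a$ plus balancedness of $\Bal$ feeding into a direct partial-sum computation. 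Concretely: decompose $\vec f$ and compare partial sums of $\vec e = (\Bal, \vec a)$ reordered into nonincreasing order against those of $\vec f$, using that $\cO(\pvec a')\hookrightarrow \cO(\vec f)$ forces $\cO(\vec f)$ to contain summands that ``account for'' $\pvec a'$, while the remaining summands of $\cO(\vec f)$ have total degree $|\Bal|$ and total rank $r-s$, hence are dominated by the balanced $\cO(\Bal)$ (since balanced is the dominance-maximum in its rank/degree class). Assembling these two facts via the partial-sum inequalities gives $\vec f \le \vec e$.

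**The ``only if'' direction.** Suppose $\vec f \le \vec e$. I want to extract $\pvec a' \le \vec a$ with $\cO(\pvec a')\hookrightarrow \cO(\vec f)$. The natural candidate: let $\cO(\pvec a')$ be the ``top part'' of $\cO(\vec f)$ — i.e., take the sub-bundle of $\cO(\vec f)$ spanned by all summands of degree $> m_o$ (or the appropriate truncation of the Harder–Narasimhan-type decomposition of $\cO(\vec f)$). Call its splitting type $\pvec a'$; it has at most $s$ summands, and I'd pad it with copies of $m_o$ or adjust so that it has exactly the rank $s$ of $\vec a$ — here I need to be careful, and the correct move is probably to let $\pvec a'$ consist of the $s$ largest entries of $\vec f$. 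Then $\cO(\pvec a') \hookrightarrow \cO(\vec f)$ is automatic (it's a sub-sum of the summands). The content is showing $\pvec a' \le \vec a$, i.e., that the $s$ largest entries of $\vec f$ are dominated by $\vec a = $ (the $s$ largest entries of $\vec e$). This should follow from $\vec f \le \vec e$ by a truncation lemma: if $\vec f \le \vec e$ as length-$r$ splitting types, then the top-$s$ truncations satisfy the same inequalities — partial sums $f_{r} + \cdots + f_{r-j+1} \le e_r + \cdots + e_{r-j+1} = a_s + \cdots + a_{s-j+1}$ for $j \le s$, using the ``reversed'' form of dominance stated in the excerpt and the fact that the top $s$ entries of $\vec e$ are exactly $\vec a$ (because $a_1 > m_o \ge$ everything in $\Bal$). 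The total-degree constraint $|\pvec a'| = |\vec a|$ requires that the top $s$ entries of $\vec f$ sum to exactly $|\vec a|$, which is not automatic — it needs $f_{s+1} \le m_o$, i.e., that $\vec f$ doesn't have ``too many large summands.'' This is where $\vec f \le \vec e$ must be used again: if $\vec f$ had $s+1$ entries exceeding $m_o$, a partial-sum inequality would be violated. I would prove this as a preliminary observation.

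**Main obstacle.** The hard part will be handling the rank/degree bookkeeping in the ``only if'' direction cleanly — specifically, establishing that $\vec f$ has exactly $s$ entries $> m_o$ with the same total degree as $\vec a$ (equivalently that the ``balanced part'' of $\vec f$ matches that of $\vec e$ in rank and degree), and then the truncation-of-dominance lemma. Neither step is deep, but getting the inequalities to line up — and dealing with the edge case where $\vec f$ has entries equal to $m_o$ that could be grouped either way — requires care. I expect the cleanest writeup reduces everything to the numerical statement: \emph{if $\vec f \le \vec e$ and the top $s$ entries of $\vec e$ are all $> m_o \ge$ (all remaining entries), then the top $s$ entries of $\vec f$ sum to $|\vec a|$ and are $\le \vec a$ in dominance} — and then the sheaf-theoretic inclusion is immediate. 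The ``if'' direction's partial-sum assembly is the second most delicate piece, since it needs the maximality of balanced bundles in the dominance order as an input.
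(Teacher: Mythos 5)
The main gap is in your ``only if'' direction (given $\vec{f} \leq \vec{e}$, produce $\pvec{a}'$). Your candidate --- the top $s$ entries of $\vec{f}$ --- generally has the \emph{wrong degree}: reversed dominance gives $f_{i+1} + \cdots + f_r \geq e_{i+1} + \cdots + e_r = |\vec{a}|$ (where $i = \len(\Bal)$), and this inequality can be strict. For instance, take $\vec{e} = (0,0,2)$, so $\Bal = (0,0)$, $m_o = 0$, $\vec{a} = (2)$, and $\vec{f} = (-1,0,3) \leq \vec{e}$. Here $\vec{f}$ has only $s = 1$ entry exceeding $m_o$, so your proposed preliminary observation (``$\vec{f}$ doesn't have too many large summands'') holds, yet the top entry is $3 \neq 2 = |\vec{a}|$, so your $\pvec{a}' = (3)$ is not comparable to $\vec{a}$ in the dominance order (which presupposes equal degree). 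The repair is to keep the top $s-1$ entries $f_{i+2}, \dots, f_r$ of $\vec{f}$ and \emph{lower} the remaining entry to force the degree: set $a_1' = |\vec{a}| - \sum_{j=2}^{s} f_{i+j}$, which satisfies $a_1' \leq f_{i+1}$ by reversed dominance. Then $\pvec{a}' \leq \vec{a}$, and the inclusion $\cO(\pvec{a}') \hookrightarrow \cO(\vec{f})$ is no longer a sub-sum of summands but an honest injection using $a_j' \leq f_{i+j}$ componentwise (in the example, $\cO(2) \hookrightarrow \cO(3)$). This is the step your plan would fail on as written.

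Your ``if'' direction follows the right skeleton (top entries of $\vec{f}$ account for $\pvec{a}'$, the rest is dominated by the balanced part), but it leaves the key sub-claim unproved: an arbitrary injection $\cO(\pvec{a}') \hookrightarrow \cO(\vec{f})$ need not respect direct summands, so you must argue that it nevertheless forces $a'_{s-j} \leq f_{r-j}$ for $0 \leq j < s$. The standard argument is by contradiction: if $a'_{s-j} > f_{r-j}$, then $\cO(a'_{s-j}, \dots, a'_s)$ admits no nonzero map to $\cO(f_1, \dots, f_{r-j})$, so it would have to inject into the rank-$j$ bundle $\cO(f_{r-j+1}, \dots, f_r)$, which is impossible for rank reasons. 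Also note that after accounting for $\pvec{a}'$, the bottom $i$ entries of $\vec{f}$ have total degree at most $|\Bal|$, not exactly $|\Bal|$; one must inflate the last of them to degree-match before invoking maximality of the balanced type. Neither repair is deep, but both are needed for the proof to close.
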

\begin{proof}
Let $i$ be the length of $\Bal$, and let $s = r-i$ be the length of $\vec{a}$.

($\Rightarrow$) Let $\pvec{a}' = (\deg \vec{a} - \sum_{j = 2}^{s}f_{i+j}, f_{i+2}, f_{i+3},\dots, f_r)$. Then since $\vec{f} \leq \vec{e}$, we see that $\pvec{a}' \leq \vec{a}$ and $a_1' = \deg \vec{a} - \sum_{j = 2}^{s}f_{i+j} \leq f_{i+1}$. Since $a_j' \leq f_{i+j}$ for all $j$, there is an inclusion $\cO(\pvec{a}') \hookrightarrow \cO(\vec{f})$.

($\Leftarrow$) Suppose that we have $\cO(\pvec{a}') \hookrightarrow \cO(\vec{f})$. We claim that $a_{s-j}' \leq f_{r-j}$ for all $0 \leq j < s$. Suppose towards a contradiction that $a_{s-j}' > f_{r-j}$. Then there are no nonzero homomorphisms $\cO(a_{s-j}',\dots, a_s')$ to $\cO(f_1,\dots,f_{r-j})$. But then we must have $\cO(a_{s-j}',\dots, a_s') \hookrightarrow \cO(f_{r-j+1},\dots, f_r)$. A rank $j+1$ vector bundle cannot be a subsheaf of a rank $j$ vector bundle, so we get a contradiction.

Having proved the claim, it follows that for all $0 \leq j \leq s-1$,
\[
  f_{r-j} + \cdots + f_r \geq a_{s-j}' + \cdots + a_s' \geq a_{s-j} + \cdots + a_s = e_{r-j} + \dots + e_r,
\]
or equivalently $\sum_{j = 1}^{k} f_j \leq \sum_{j = 1}^k e_j$ for all $k \geq i$.
It remains to show that $f_1 + \cdots + f_k \leq e_1 + \cdots + e_k$ for $k < i$. Let $\vec{g} = (f_1, \dots, f_{i-1}, g_i)$, where $g_i = \deg \Bal - (f_1 + \cdots + f_{i-1})$. Then $\deg \vec{g} = \deg \Bal$, and $g_i \geq f_i$. Since $\Bal$ is balanced, $\vec{g} \leq \Bal$. So for $k < i$,
\[
  f_1 + \cdots + f_k = g_1 + \cdots + g_k \leq \Bal_1 + \cdots + \Bal_k = e_1 + \cdots + e_k.
\]
\end{proof}

\begin{proposition} 
\label{prop:equiv_leq}
Let $E = \cO(\vec{e}), F = \cO(\vec{f})$ be vector bundles on $\bP^1$ of the same degree and rank. Then the following are equivalent:
\begin{enumerate}
  \item there exists a family of vector bundles on $\bP^1_{k[t]}$ whose restriction to the fiber at $0$ is $\cO(\vec{f})$ and away from $0$ is $\cO(\vec{e})$;
  \item $\Splitt \supset \mathbf{\Sigma}_{\vec{f}}$;
  \item $h^1 \cO(\vec{e})(m) \leq h^1 \cO(\vec{f})(m)$ for all $m \in \Z$;
  \item $\sum_{i = 1}^{k} e_i \geq \sum_{i = 1}^k f_i$ for all $1 \leq k \leq r$, i.e. $\vec{e} \geq \vec{f}$;
  \item $\cO(\vec{f})$ admits a flag of subbundles of type $\vec{e}$;
  \item $\cO(\vec{e})$ is realizable as an extension of type $\vec{f}$.
\end{enumerate}
\end{proposition}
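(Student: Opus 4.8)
The plan is to prove the six conditions equivalent in two layers: first the ``geometric/combinatorial core'' $(1)\Leftrightarrow(2)\Leftrightarrow(3)\Leftrightarrow(4)$, which needs only upper semicontinuity of cohomology, the Hardy--Littlewood--P\'olya majorization criterion, and the fact that an extension $0\to K\to E\to Q\to 0$ degenerates to $K\oplus Q$; and then the subsheaf-theoretic conditions $(5)$ and $(6)$, attached by induction on rank using Lemma~\ref{lem:subsheaf}. I would run the cycle as $(1)\Rightarrow(2)\Rightarrow(3)\Rightarrow(4)\Rightarrow(6)\Rightarrow(1)$, with $(4)\Leftrightarrow(5)$ inserted on the side.

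The soft implications are quick. For $(1)\Rightarrow(2)$: a family as in (1) defines a morphism $\Spec k[t]\to\vbst$ carrying a dense open into $\SplittOpen$, hence all of $\Spec k[t]$ into the closed substack $\Splitt$; as $t=0$ maps to $[\cO(\vec f)]$ and $|\vbst|$ has exactly one point per splitting type, this forces $\mathbf{\Sigma}_{\vec f}\subseteq\Splitt$. For $(2)\Rightarrow(3)$: apply upper semicontinuity of $[\cE]\mapsto h^1(\bP^1,\cE(m))$ on $\vbst$ to the universal bundle, comparing the value at $[\cO(\vec f)]\in\Splitt$ with the generic value $h^1(\cO(\vec e)(m))$ on $\SplittOpen$. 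For $(3)\Rightarrow(4)$: write $h^1(\cO(\vec e)(m))=\sum_i(-m-1-e_i)_+$; with $t=-m-1$ the hypothesis reads $\sum_i(t-e_i)_+\le\sum_i(t-f_i)_+$ for all $t\in\Z$, and since the entries are integers with $\sum e_i=\sum f_i$ the majorization criterion (equivalently, a telescoping of these convex piecewise-linear functions, whose first differences at $t$ count $\#\{i:e_i<t\}$) yields $\sum_{i=1}^k e_i\ge\sum_{i=1}^k f_i$ for all $k$. And $(6)\Rightarrow(1)$ is clean: given an extension of type $\vec f$ realizing $\cO(\vec e)=K_{n+1}$ with classes $\xi_i$, form the iterated extension with classes $t\xi_i$ over $\bP^1\times\Spec k[t]$; its fibre over $t\neq0$ is $\cO(\vec e)$ and over $t=0$ is $\bigoplus_iQ_i=\cO(\vec f)$, which is exactly the family of (1).

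The content is in $(4)\Rightarrow(6)$ and $(4)\Leftrightarrow(5)$, both inductions. For $(4)\Rightarrow(6)$: assuming $\vec f$ is not balanced, write $\cO(\vec f)=Q_1\oplus\cO(\vec f_2)$ with $Q_1=\cO(\mu)^{\oplus a}$ the lowest-slope piece and $\vec f_2=(f_{a+1},\dots,f_r)$; since $e_1\ge f_1=\mu$ the twist $\cO(\vec e)(-\mu)$ is globally generated, so it contains $\cO^{\oplus a}$ as a subbundle, giving a subbundle $Q_1\hookrightarrow\cO(\vec e)$ with locally free quotient $\cO(\vec e_2)$. If one can choose this subbundle so that $\vec e_2\ge\vec f_2$, then induction gives a filtration of $\cO(\vec e_2)$ with graded pieces $Q_2,\dots,Q_{n+1}$; pulling it back along $\cO(\vec e)\to\cO(\vec e_2)$ and prepending $Q_1$ realizes $\cO(\vec e)$ as an extension of type $\vec f$, closing the induction. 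For $(4)\Leftrightarrow(5)$: the forward direction is an induction using Lemma~\ref{lem:subsheaf} with $\Bal=\cO(e_1)^{\oplus n_1}$ the bottom Harder--Narasimhan piece of $\cO(\vec e)$ and $\vec a=(e_{n_1+1},\dots,e_r)$ --- the lemma yields $\cO(\pvec a')\hookrightarrow\cO(\vec f)$ with $\pvec a'\le\vec a$, and saturating this to a subbundle $E'\subseteq\cO(\vec f)$ of the rank and degree of $\cO(\vec a)\subseteq\cO(\vec e)$ (the inequality $\pvec a'\le\vec a$ is what makes the degree land correctly), one restricts the dominance relation to $E'$ versus $\cO(\vec a)$ and recurses inside $E'$. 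Conversely, given the flag, a rank-$\rho$ subbundle of $\cO(\vec f)$ has degree at most the sum of the top $\rho$ entries of $\vec f$, while $E_i'$ has degree the sum of the top $\rho_i$ entries of $\vec e$ (it matches the Harder--Narasimhan filtration), which gives condition $(4)$ at the Harder--Narasimhan breakpoints of $\vec e$; these propagate to all $k$ because $k\mapsto\sum_{j\le k}f_j$ is convex while $k\mapsto\sum_{j\le k}e_j$ is affine between consecutive breakpoints.

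I expect the main obstacle to be precisely the step in $(4)\Rightarrow(6)$ asserting that the general copy of $Q_1$ inside $\cO(\vec e)$ can be chosen with quotient dominating $\vec f_2$ --- this is where the purely numerical inequality $\vec e\ge\vec f$ must be converted into genuine control of quotients on $\bP^1$; most likely it is cleanest to dualize, so that quotients of $\cO(\vec e)$ become subsheaves of $\cO(\vec e)^\vee$, and then apply Lemma~\ref{lem:subsheaf} inside the induction. The saturation-and-degree bookkeeping in $(4)\Rightarrow(5)$ is a secondary point that needs care but should be routine once the lemma is in hand.
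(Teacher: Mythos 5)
Most of your cycle is sound, and in places it is genuinely different from the paper's route: your $(3)\Rightarrow(4)$ via the majorization criterion for $\sum_i(t-e_i)_+$ replaces the paper's proof by contradiction at the specific twist $m=-f_{k+1}-1$; your $(5)\Rightarrow(4)$ via the degree bound on subsheaves plus convexity of $k\mapsto\sum_{j\le k}f_j$ replaces the paper's induction; and your $(6)\Rightarrow(1)$ by scaling extension classes is exactly the paper's argument. The paper, by contrast, does \emph{not} prove $(4)\Rightarrow(6)$ directly at all: it closes the loop as $(5)\Rightarrow(2)$ using the irreducibility and properness of the hyper-Quot stack from Section~\ref{sec:construction_of_the_resolution}, and then gets $(2)\Rightarrow(6)$ from the moduli space $\bU_{\vec{f}}$ of iterated extensions (Proposition~\ref{prop:moduli_construction}), whose image in $\vbst$ is shown to be exactly $\bigcup_{\vec{e}\geq\vec{f}}\SplittOpen$. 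Your plan, if completed, would make the proposition self-contained at the level of bundles on $\bP^1$, which would be a real simplification.

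The gap is the step you yourself flag in $(4)\Rightarrow(6)$: that one can choose the subbundle $Q_1=\cO(f_1)^{\oplus a}\hookrightarrow\cO(\vec{e})$ so that the quotient $\cO(\vec{e}_2)$ satisfies $\vec{e}_2\geq\vec{f}_2$. This is true, but it is not a formality, and your proposed fix --- dualize and apply Lemma~\ref{lem:subsheaf} --- does not go through as stated. Lemma~\ref{lem:subsheaf} peels off the \emph{top} balanced piece $\vec{a}$ of the \emph{dominant} type $\vec{e}$ and produces a subsheaf $\cO(\pvec{a}')\hookrightarrow\cO(\vec{f})$ of the dominated bundle; what you need peels off the \emph{bottom} balanced piece of the \emph{dominated} type $\vec{f}$ and realizes it as a subbundle of the dominant bundle with controlled locally free quotient. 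Dualizing turns this into exhibiting the top piece of $\vec{f}^{\ast}$ as a \emph{quotient} of $\cO(\vec{e}^{\ast})$ with kernel dominating the rest of $\vec{f}^{\ast}$ --- still not an instance of the lemma, since the roles of dominant/dominated are reversed. What is actually required is the statement that the generic quotient of $\cO(\vec{e})$ by a generic copy of $\cO(f_1)^{\oplus a}$ is the maximal splitting type among rank-$(r-a)$, degree-$(d-af_1)$ quotients, together with the purely combinatorial check that this maximal type dominates $\vec{f}_2$ whenever $\vec{e}\geq\vec{f}$; this is in effect a dual form of Lemma~\ref{lem:eb}, and it is precisely the content the paper outsources to Lemma~\ref{lem:ext_iterated_leq} and Proposition~\ref{prop:moduli_construction}. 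Until that step is proved, the implication $(4)\Rightarrow(6)$ --- and hence your closure of the cycle --- is not established.
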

\begin{proof}
We will prove $(1) \Rightarrow (2) \Rightarrow (3) \Rightarrow (4) \Leftrightarrow (5) \Rightarrow (2)$, and delay the proof of $(2) \Rightarrow (6) \Rightarrow (1)$ to the next subsection.

(1) $\Rightarrow$ (2): clear.

(2) $\Rightarrow$ (3): this follows from upper-semicontinuity.

(3) $\Rightarrow$ (4): Suppose for contradiction that there exists some $k$ such that $\sum_{i = 1}^k f_i > \sum_{i = 1}^k e_i$. Let $k$ be the maximal index for which this occurs. Then $\sum_{i = k+1}^r f_i < \sum_{i = k+1}^r e_i$ but $\sum_{i = k+2}^r f_i \geq \sum_{i = k+2}^r e_i$. This implies that $f_{k+1} < e_{k+1}$. Now $h^0 \cO(\vec{f})(-f_{k+1}-1) = f_{k+1} + \cdots + f_r - (r-k)f_{k+1} < e_{k+1} + \cdots + e_r - (r-k)f_{k+1} \leq h^0 \cO(\vec{e})(-f_{k+1}-1)$. Equivalently, $h^1 \cO(\vec{f})(-f_{k+1}-1) \leq h^1 \cO(\vec{e})(-f_{k+1}-1)$, which contradicts (3). 

(4) $\Leftrightarrow$ (5): We proceed by induction on $k$. When $k = 1$, $\vec{e} = \vec{f}$ and both (4) and (5) are trivial. Now suppose that (4) $\Leftrightarrow$ (5) is true for all ranks less than $k$. Write $\vec{e} = (m_o, \dots, m_o, \vec{a})$ where $a_1 > m_o$. Then Lemma \ref{lem:subsheaf} shows that $\vec{e} \geq \vec{f}$ if and only if there exists a subbundle $\cO(\pvec{a}') \hookrightarrow \cO(\vec{f})$ with $\pvec{a}' \leq \vec{a}$. By induction, $\pvec{a}'$ admits a flag of subbundles of type $\vec{a}$ if and only if $\pvec{a}' \leq \vec{a}$. Adjoining the inclusion $\cO(\pvec{a}') \hookrightarrow \cO(\vec{f})$ to this flag proves the desired equivalence.

(5) $\Rightarrow$ (2): The proof of this part depends on results in future sections. We show that there exists an irreducible algebraic stack parametrizing families of vector bundles equipped with flags of subbundles of type $\vec{e}$, which maps properly to $\vbst$. The uniqueness of the Harder-Narasimhan flag for $\cO(\vec{e})$ shows that this is an isomorphism over $\SplittOpen$, and the irreducibility of this stack shows that the image is exactly the closure $\Splitt$. 
\end{proof}

\subsection{An explicit smooth cover of \texorpdfstring{$\vbst$}{the stack of vector bundles on P1}} 
\label{subsec:sm_cover}
In the literature, the algebraicity of the stack of vector bundles on a fixed projective scheme is usually proven by taking open loci in increasingly large Quot schemes. In the case of $\bP^1$, there is a nice family of affine spaces which forms a smooth cover of $\vbst$. This smooth cover will be used in Section \ref{sec:construction_of_the_resolution} to prove irreducibility and smoothness of the resolution of singularities we construct.

Let $U_{\vec{f}}$ be the affine space corresponding to the vector space $H^1 \cEnd (\cO(\vec{f}))$. After fixing an isomorphism $\cO(\vec{f}) \cong Q_1 \oplus Q_2 \oplus \cdots \oplus Q_{m+1}$, where each $Q_i$ is perfectly balanced and $\mu(Q_1) \leq \mu(Q_2) \leq \cdots \leq \mu(Q_{m+1})$, we may identify $H^1 \cEnd (\cO(\vec{f})) \cong \oplus_{j = 2}^{m} \Ext^1_{\bP^1}(Q_j, \oplus_{i = 1}^{j-1} Q_i)$. 

\begin{lemma} Let $A, B$ be vector bundles on $\bP^1$, and let $0 \to A \to E \to B \to 0$ be an extension. Then $h^1 E(m) \leq h^1 (A \oplus B)(m)$ for all $m \in \Z$.
\end{lemma}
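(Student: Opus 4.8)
The plan is to twist the short exact sequence by $\cO(m)$ and extract the inequality from the associated long exact sequence in cohomology, using that $\bP^1$ has cohomological dimension one.

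First I would tensor $0 \to A \to E \to B \to 0$ with the line bundle $\cO(m)$; since $\cO(m)$ is locally free the sequence stays exact, giving $0 \to A(m) \to E(m) \to B(m) \to 0$. Taking the long exact sequence in cohomology and using $H^{\geq 2}(\bP^1, -) = 0$ yields exactness of
\[
  H^0(B(m)) \xrightarrow{\ \delta\ } H^1(A(m)) \xrightarrow{\ \alpha\ } H^1(E(m)) \longrightarrow H^1(B(m)) \longrightarrow 0 .
\]
Surjectivity on the right (coming from $H^2(A(m)) = 0$) shows $H^1(E(m))$ maps onto $H^1(B(m))$ with kernel $\operatorname{im}\alpha$, and exactness at $H^1(A(m))$ identifies $\operatorname{im}\alpha \cong H^1(A(m))/\operatorname{im}\delta = \operatorname{coker}\delta$. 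Therefore
\[
  h^1(E(m)) = h^1(B(m)) + \dim\operatorname{coker}\delta \leq h^1(B(m)) + h^1(A(m)) = h^1\big((A\oplus B)(m)\big),
\]
the final equality because cohomology commutes with finite direct sums. Since $m \in \Z$ was arbitrary, this is the claim.

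There is essentially no obstacle: the only input beyond the long exact sequence is the vanishing of the second cohomology on $\bP^1$, which forces the map $H^1(E(m)) \to H^1(B(m))$ to be surjective, so that the defect $h^1((A\oplus B)(m)) - h^1(E(m))$ equals the rank of the connecting homomorphism $\delta$ and is hence nonnegative. Conceptually, $\delta$ measures exactly how far the extension $E$ fails to split at the level of $H^1$ after twisting, and the ambient dimensions $h^1(A(m)) + h^1(B(m))$ never decrease below $h^1(E(m))$.
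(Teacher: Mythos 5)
Your proposal is correct and is exactly the paper's argument: twist by $\cO(m)$, take the long exact sequence in cohomology, and bound $h^1(E(m))$ by $h^1(A(m)) + h^1(B(m))$. The paper states this in one line; you have simply spelled out the role of $H^2(\bP^1,-)=0$ and the connecting map $\delta$.
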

\begin{proof}
The long exact sequence in cohomology induced by 
\[
  0 \to A(m) \to E(m) \to B(m) \to 0
\]
shows that 
  $h^1 E(m) \leq h^1 A(m) + h^1 B(m) = h^1(A\oplus B)(m)$.
\end{proof}
Combined with $(2) \Leftrightarrow (3) \Leftrightarrow (4)$ from Proposition \ref{prop:equiv_leq}, this implies that 
\begin{lemma} \label{lem:ext_leq}
Let $\vec{a}, \vec{b}$ be two splitting types of ranks $s$ and $r-s$, let $\cO(\vec{f}) = \cO(\vec{a}) \oplus \cO(\vec{b})$, and let $0 \to \cO(\vec{a}) \to \cO(\vec{e}) \to \cO(\vec{b}) \to 0$ be an extension. Then $\vec{e} \geq \vec{f}$. \qed
\end{lemma}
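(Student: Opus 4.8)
The plan is to deduce this immediately from the preceding cohomology-comparison lemma together with Proposition \ref{prop:equiv_leq}. First I would observe that the short exact sequence $0 \to \cO(\vec{a}) \to \cO(\vec{e}) \to \cO(\vec{b}) \to 0$ forces $\cO(\vec{e})$ to have rank $s + (r-s) = r$ and degree $\deg \vec{a} + \deg \vec{b} = \deg \vec{f}$, so $\vec{e}$ and $\vec{f}$ are splitting types of the same rank and degree and Proposition \ref{prop:equiv_leq} applies to the pair $(\vec{e}, \vec{f})$.

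Next I would apply the preceding lemma with $A = \cO(\vec{a})$, $B = \cO(\vec{b})$, $E = \cO(\vec{e})$, which gives $h^1 \cO(\vec{e})(m) \leq h^1\bigl(\cO(\vec{a}) \oplus \cO(\vec{b})\bigr)(m) = h^1 \cO(\vec{f})(m)$ for every $m \in \Z$. This is precisely condition (3) of Proposition \ref{prop:equiv_leq}. By the implication $(3) \Rightarrow (4)$ of that proposition, we conclude $\vec{e} \geq \vec{f}$, which is the assertion of the lemma.

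I expect no real obstacle here: all the content sits in the preceding lemma (an immediate consequence of the long exact sequence in cohomology) and in the equivalence $(3) \Leftrightarrow (4)$ already proved. The only things to verify are the bookkeeping facts that $\vec{e}$ is a splitting type of the same rank and degree as $\vec{f}$, and that "$\vec{e} \geq \vec{f}$" in the statement is the dominance order of condition (4) — both immediate. If one wished to avoid citing Proposition \ref{prop:equiv_leq}, one could instead inline the $(3) \Rightarrow (4)$ argument already given there (take the maximal $k$ with $\sum_{i \le k} f_i > \sum_{i \le k} e_i$, deduce $f_{k+1} < e_{k+1}$, and compare $h^0$ of the two bundles twisted by $-f_{k+1}-1$ to contradict the cohomology inequality), but since that implication is available the two-line deduction above is the cleanest route, so that is what I would write.
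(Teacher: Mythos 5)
Your proposal is correct and is exactly the paper's argument: the lemma is stated with a \qed precisely because it follows by combining the preceding cohomology-comparison lemma with the equivalence $(3) \Leftrightarrow (4)$ of Proposition \ref{prop:equiv_leq}, which is the two-line deduction you give. Your remark that $(3) \Rightarrow (4)$ is proved directly (not via the deferred implications) correctly disposes of any circularity worry.
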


\begin{lemma} \label{lem:ext_iterated_leq} Let $\vec{e}$ be realizable as an extension of type $\vec{f}$. Then $\vec{e} \geq \vec{f}$. In particular, if $\cO(\vec{f}) = Q_1 \oplus \cdots \oplus Q_{m+1}$ is a decomposition into perfectly balanced bundles such that $\mu(Q_i) < \mu(Q_{i+1})$, then $e_r \leq \mu(Q_{m+1})$ and $\Ext^1(\cO(\vec{e}), Q_{m+1}) = 0$. If $Q$ is perfectly balanced with $\mu(Q) > \mu(Q_{m+1})$, then $\dim \Ext^1(Q, \cO(\vec{e}))$ does not depend on $\vec{e}$.
\end{lemma}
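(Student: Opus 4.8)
The plan is to first establish the inequality $\vec e \geq \vec f$, and then to read off the two $\Ext$ statements as elementary consequences. For the first claim, recall that by definition there is a chain $K_1 = Q_1$ with extensions $0 \to K_i \to K_{i+1} \to Q_{i+1} \to 0$ for $1 \leq i \leq m$ and $K_{m+1} \cong \cO(\vec e)$. I would apply the preceding lemma (that $h^1$ of an extension is at most $h^1$ of the corresponding direct sum) to each step and induct on $i$: this gives $h^1 K_{i+1}(\ell) \leq h^1 K_i(\ell) + h^1 Q_{i+1}(\ell)$ for all $\ell \in \Z$, hence
\[
  h^1 \cO(\vec e)(\ell) \;=\; h^1 K_{m+1}(\ell) \;\leq\; \sum_{i=1}^{m+1} h^1 Q_i(\ell) \;=\; h^1 \cO(\vec f)(\ell)
  \qquad \text{for all } \ell \in \Z.
\]
By the equivalence $(3) \Leftrightarrow (4)$ of Proposition \ref{prop:equiv_leq}, this says precisely $\vec e \geq \vec f$.

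Next I would extract the consequences of $\vec e \geq \vec f$. Since each $Q_i$ is perfectly balanced and $\mu(Q_1) < \cdots < \mu(Q_{m+1})$, the sorted splitting type $\vec f$ has largest entry $f_r = \mu(Q_{m+1})$. The relation $\vec e \geq \vec f$ is equivalent to $\sum_{i=k}^r e_i \leq \sum_{i=k}^r f_i$ for all $1 \leq k \leq r$; taking $k = r$ yields $e_r \leq f_r = \mu(Q_{m+1})$, and therefore $e_i \leq \mu(Q_{m+1})$ for every $i$.

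For the $\Ext$ computations I would simply decompose into line bundles. Write $Q_{m+1} \cong \cO(q)^{\oplus t}$ with $q = \mu(Q_{m+1}) \in \Z$. Then $\Ext^1(\cO(\vec e), Q_{m+1}) \cong \bigoplus_{i=1}^r H^1(\bP^1, \cO(q - e_i))^{\oplus t}$, and since $q - e_i \geq 0 > -1$ for each $i$, every summand vanishes, so $\Ext^1(\cO(\vec e), Q_{m+1}) = 0$. Similarly, if $Q \cong \cO(p)^{\oplus u}$ is perfectly balanced with $p = \mu(Q) > q$, then $p \geq q+1$ since both are integers, so $p - e_i \geq p - q \geq 1$ for every $i$; hence $\Ext^1(Q, \cO(\vec e)) \cong \bigoplus_{i=1}^r H^1(\bP^1, \cO(e_i - p))^{\oplus u}$ has dimension $u \sum_{i=1}^r (p - e_i - 1) = u(rp - d - r)$, which depends only on $p = \mu(Q)$, $r$, and $d$, and not on $\vec e$.

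Each step here is a direct computation, so I do not anticipate a genuine obstacle; the only points that need care are the bookkeeping in the iterated $h^1$ estimate and keeping the inequalities sharp enough — in particular the integrality $p \geq q+1$ — so that the relevant $H^1$'s vanish, respectively have $\vec e$-independent dimension.
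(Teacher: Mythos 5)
Your proposal is correct and follows essentially the same route as the paper: both arguments rest on the subadditivity of $h^1$ under extensions together with the equivalence $(3) \Leftrightarrow (4)$ of Proposition \ref{prop:equiv_leq}, and the $\Ext$ consequences are the same elementary line-bundle computations the paper leaves implicit. The only difference is organizational — you telescope the $h^1$ inequality across the whole chain and convert to the dominance order once at the end, whereas the paper inducts on $m$ through Lemma \ref{lem:ext_leq} and compares splitting types at each stage; your version is slightly more streamlined but not a genuinely different argument.
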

\begin{proof}
The implications are clear from the statement that $\vec{e} \geq \vec{f}$. To prove this, we proceed by induction on $m$. If $m = 1$, then this is Lemma \ref{lem:ext_leq}. Let $\vec{a} = Q_1 \oplus \cdots \oplus Q_m$. If $\vec{e}$ is realizable as an extension of type $\vec{f}$, then there exists an exact sequence 
\[
  0 \to \cO(\vec{b}) \to \cO(\vec{e}) \to Q_{m+1} \to 0,
\]
where $\cO(\vec{b})$ is realizable as an extension of type $\vec{a}$. Lemma \ref{lem:ext_leq} implies that $\vec{e} \geq \pvec{f}'$, where $\cO(\pvec{f}') = \cO(\vec{b}) \oplus Q_{m+1}$. By induction, $\vec{b} \geq \vec{a}$ and the largest summand of both are less than $\mu(Q_{m+1})$. So $\pvec{f}' \geq \vec{f}$. Putting the inequalities together, we get $\vec{e} \geq \vec{f}$.
\end{proof} 

\begin{proposition} \label{prop:moduli_construction}Let $\mathbf{U}_{\vec{f}}$ be the functor defined as follows. Let $T$ be a $k$-scheme, and let $p: T \times \bP^1 \to \bP^1$. A morphism $T \to \mathbf{U}_{\vec{f}}$ is the data of vector bundles $\cK_1, \cK_2, \dots, \cK_{m+1}$ on $T \times \bP^1$, the data of extensions
\[
  0 \to \cK_i \to \cK_{i+1} \to p^* Q_{i+1} \to 0,
\]
plus an isomorphism $\cK_1 \cong p^* Q_1$. Then there exist a sequences of stacks with maps $\bU_{\vec{f}}^{(0)} \to \bU_{\vec{f}}^{(1)} \to \cdots \to \bU_{\vec{f}}^{(m+1)}$, so that $\bU_{\vec{f}}^{(0)} \to \bU_{\vec{f}}^{(1)}$ is a $\text{GL}_{\rk Q_1}$-torsor, $\bU_{\vec{f}}^{(i)} \to \bU_{\vec{f}}^{(i+1)}$ where $i \geq 1$ is an open locus inside a vector bundle over $\bU_{\vec{f}}^{(i+1)}$, $\bU_{\vec{f}}^{(0)} \cong \bU_{\vec{f}}$, and 
$\bU_{\vec{f}}^{(m+1)} = \bigcup_{\vec{e} \geq \vec{f}} \SplittOpen \subset \vbst$. 
In particular, the map $\mathbf{U}_{\vec{f}} \to \vbst$ is surjective onto $\bigcup_{\vec{e} \geq \vec{f}} \SplittOpen$. Moreover, $\mathbf{U}_{\vec{f}}$ is an algebraic space. 
\end{proposition}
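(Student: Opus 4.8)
The plan is to build the tower by descending induction, beginning at the top with an open substack of $\vbst$ and adjoining one step of the flag at each stage; the one non-formal input is the $\Ext^1$-vanishing of Lemma~\ref{lem:ext_iterated_leq}, which is precisely what makes the pushforward appearing at each step a vector bundle. I take $\bU_{\vec f}^{(m+1)}$ to be the substack $\bigcup_{\vec e \ge \vec f}\SplittOpen \subset \vbst$. By $(3)\Leftrightarrow(4)$ of Proposition~\ref{prop:equiv_leq}, a bundle lies in this union exactly when $h^1(\cE(m)) \le h^1(\cO(\vec f)(m))$ for the finitely many relevant $m$, which by upper semicontinuity is an open condition; so $\bU_{\vec f}^{(m+1)}$ is an open substack of $\vbst$, in particular algebraic, and it carries a universal bundle $\cK_{m+1}$.

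For the inductive step, suppose $\bU_{\vec f}^{(i+1)}$ has been constructed together with a universal bundle $\cK_{i+1}$ every fiber of which has splitting type $\ge (Q_1,\dots,Q_{i+1})$ (the concatenation, which is a genuine splitting type since $\mu(Q_1) < \cdots < \mu(Q_{i+1})$), equivalently is realizable as an extension of that type. Lemma~\ref{lem:ext_iterated_leq} then gives $\Ext^1(\cK_{i+1}|_t, Q_{i+1}) = 0$ for every $t$, so cohomology and base change for the projection $\bU_{\vec f}^{(i+1)} \times \bP^1 \to \bU_{\vec f}^{(i+1)}$ shows that the pushforward $\cV_i$ of $\cHom(\cK_{i+1}, p^*Q_{i+1})$ is a vector bundle whose formation commutes with base change. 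I define $\bU_{\vec f}^{(i)}$ to be the locus inside the total space of $\cV_i$ where the universal homomorphism $\cK_{i+1} \to p^*Q_{i+1}$ is surjective — open, being the complement of the support of its cokernel — and where moreover the resulting kernel $\cK_i$ has splitting type $\ge (Q_1,\dots,Q_i)$ at every point, which is again open by the semicontinuity above; this $\cK_i$ carries the induction hypothesis one step down. After $m$ such steps $\bU_{\vec f}^{(1)}$ carries a bundle $\cK_1$ with every fiber isomorphic to $Q_1$; since $Q_1 = \cO(a)^{\oplus \rk Q_1}$ has endomorphism algebra the scalar matrices, the automorphism group scheme of $p^*Q_1$ relative to the base is the constant group $\text{GL}_{\rk Q_1}$, and I set $\bU_{\vec f}^{(0)} := \underline{\operatorname{Isom}}(\cK_1, p^*Q_1)$, which is a $\text{GL}_{\rk Q_1}$-torsor over $\bU_{\vec f}^{(1)}$ because $\cK_1$ is the pullback of a rank $\rk Q_1$ bundle from $\bU_{\vec f}^{(1)}$ and hence fppf-locally isomorphic to $p^*Q_1$.

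Next I would verify the stated properties. Each map $\bU_{\vec f}^{(i)} \to \bU_{\vec f}^{(i+1)}$ is surjective because its fiber over any point is nonempty: since $\cK_{i+1}|_t$ is realizable as an extension of type $(Q_1,\dots,Q_{i+1})$, the equivalence $(4)\Leftrightarrow(6)$ of Proposition~\ref{prop:equiv_leq} produces a surjection $\cK_{i+1}|_t \to Q_{i+1}$ whose kernel is realizable as an extension of type $(Q_1,\dots,Q_i)$, and this is a point of the fiber. Composing with surjectivity of the torsor, $\bU_{\vec f} \to \vbst$ is surjective onto $\bigcup_{\vec e \ge \vec f}\SplittOpen$. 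Unwinding the tower, a $T$-point of $\bU_{\vec f}^{(0)}$ is the datum of $\cK_{m+1}$, of surjections $\cK_j \to p^*Q_j$ defining $\cK_{j-1} = \ker$ for $j = m+1,\dots,2$, and of an isomorphism $\iota \colon \cK_1 \xrightarrow{\sim} p^*Q_1$; a surjection onto $p^*Q_j$ is the same datum as an extension $0 \to \cK_{j-1} \to \cK_j \to p^*Q_j \to 0$, and Lemmas~\ref{lem:ext_leq} and~\ref{lem:ext_iterated_leq} show that the open conditions built into the various $\bU_{\vec f}^{(i)}$ are automatic for data arising from such iterated extensions of the $p^*Q_j$, so this functor is exactly $\bU_{\vec f}$.

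Finally, the tower is a chain of algebraic stacks (an open substack of $\vbst$, then open subschemes of vector bundles, then a $\text{GL}$-torsor), so $\bU_{\vec f}$ is an algebraic stack; to conclude that it is an algebraic space I would check that its $T$-points admit no nontrivial automorphisms. An automorphism $\psi$ of such a point must commute with every surjection $\cK_j \to p^*Q_j$, forcing it to preserve each $\cK_{j-1}$ and to induce the identity on each $p^*Q_j$, and it must be compatible with $\iota$, forcing $\psi|_{\cK_1} = \id$; then inductively $\psi|_{\cK_j} - \id$ is classified by an element of $\Hom(p^*Q_j, \cK_{j-1})$, which vanishes because every summand of $\cK_{j-1}|_t$ has degree $\le \mu(Q_{j-1}) < \mu(Q_j)$ by Lemma~\ref{lem:ext_iterated_leq}, so $\psi = \id$. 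I expect the real work to lie not in any single deep step but in the bookkeeping: pinning down exactly the right open conditions on the $\cK_i$ so that the pushforward remains a vector bundle at every stage — which hinges entirely on Lemma~\ref{lem:ext_iterated_leq} — and checking that the functor produced by the tower coincides with $\bU_{\vec f}$ on the nose.
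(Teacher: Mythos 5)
Your tower construction and your argument that $\bU_{\vec f}$ is an algebraic space follow essentially the same route as the paper: descend from the open substack $\bigcup_{\vec e\ge\vec f}\SplittOpen$, use Lemma~\ref{lem:ext_iterated_leq} to guarantee that $\pi_*\cHom(\cK_{i+1},p^*Q_{i+1})$ is a vector bundle commuting with base change, observe that the open conditions imposed along the way are automatic for genuine iterated extensions so that the bottom of the tower is $\bU_{\vec f}$ on the nose, and kill automorphisms via $\Hom(p^*Q_j,\cK_{j-1})=0$. Your choice of open condition (kernel of splitting type $\ge$ the concatenation of $(Q_1,\dots,Q_i)$) differs slightly from the paper's ($R^1\pi_*\cHom(\cK_{i-1},p^*Q_{i-1})=0$ plus surjectivity), but it is also open, also implied by Lemma~\ref{lem:ext_iterated_leq} for actual points of $\bU_{\vec f}$, and has the mild advantage of making $\cK_1\cong p^*Q_1$ automatic at the last step. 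All of that is fine.

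The genuine problem is your surjectivity argument. You produce a point in each fiber of $\bU_{\vec f}^{(i)}\to\bU_{\vec f}^{(i+1)}$ by invoking the equivalence $(4)\Leftrightarrow(6)$ of Proposition~\ref{prop:equiv_leq} to realize $\cK_{i+1}|_t$ as an iterated extension with top quotient $Q_{i+1}$. But the paper only proves $(6)\Rightarrow(4)$ (this is Lemma~\ref{lem:ext_iterated_leq}) before this point; the converse implication $(2)\Rightarrow(6)$ is explicitly deferred and is \emph{deduced from} Proposition~\ref{prop:moduli_construction}, precisely via the surjectivity of $\bU_{\vec f}\to\bigcup_{\vec e\ge\vec f}\SplittOpen$ (a preimage of a bundle $E$ is exactly a realization of $E$ as an extension of type $\vec f$). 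So your argument is circular: the nontrivial existence statement ``every bundle with splitting type $\ge\vec f$ admits a surjection onto $Q_{i+1}$ whose kernel is again an iterated extension'' is the content of $(6)$, not an available tool. The paper avoids this by a softer argument: $\bU_{\vec f}\to\vbst$ is flat and locally of finite presentation, hence open; its image is a union of strata $\SplittOpen$ with $\vec e\ge\vec f$ (by Lemma~\ref{lem:ext_iterated_leq}) containing the split point of $\mathbf{\Sigma}_{\vec f}$; and an open union of strata containing $\mathbf{\Sigma}_{\vec f}$ and contained in $\bigcup_{\vec e\ge\vec f}\SplittOpen$ must be all of it, since $\mathbf{\Sigma}_{\vec f}$ lies in the closure of each such stratum. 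You should replace your fiberwise nonemptiness argument with this openness argument (or supply an independent, non-circular proof of $(4)\Rightarrow(6)$, which is a genuinely nontrivial filtration-building argument not reducible to the already-proved $(4)\Leftrightarrow(5)$, since $(5)$ concerns subbundles of $\cO(\vec f)$ rather than filtrations of $\cO(\vec e)$).
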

\begin{proof}
First we explain the construction of $\bU_{\vec{f}}^{(i)}$. 
We start with $\bU_{\vec{f}}^{(m+1)} = \bigcup_{\vec{e} \geq \vec{f}} \SplittOpen$. Let $T$ be a $k$-scheme with maps $p: T \times \bP^1 \to \bP^1$ and $\pi: T \times \bP^1 \to T$. By induction, we assume that a morphism $T \to \bU_{\vec{f}}^{(k)}$ is the data of extensions
\[
  0 \to \cK_i \to \cK_{i+1} \to p^* Q_{i+1} \to 0
\]
where $k \leq i \leq m$, such that $R^1 \pi_* \cHom(\cK_{k}, p^* Q_{k}) = 0$ and $\cK_{m+1}$ splits as at worst $\vec{f}$. Then we define 
\[
\bU_{\vec{f}}^{(k-1)} = \pi_* \cHom(\cK_{k}, p^* Q_{k}) \setminus (\mathcal{Z}_1 \cup \mathcal{Z}_2),
\] where $\mathcal{Z}_i$ are defined as follows. Let $\cK_{k-1}$ be the kernel of the universal map $\cK_{k} \to p^*Q_{k}$ on $\pi_* \cHom(\cK_{k}, p^* Q_{k}) \times \bP^1$. Then $\mathcal{Z}_1 = \text{Supp} R^1 \pi_* \cHom(\cK_{k-1}, p^* Q_{k-1})$, and $\mathcal{Z}_2$ is the closed locus where the map $\cK_{k-1} \to p^* Q_{k-1}$ fails to be surjective. 

Continue this process until we have constructed $\bU_{\vec{f}}^{(1)}$. Then $\bU_{\vec{f}}^{(0)}$ is defined to be $\underline{\text{Isom}}(\cK_1, p^*Q_1)$ over the open locus in $\bU_{\vec{f}}^{(1)}$ where $\cK_1$ is isomorphic to $Q_1$. This is open because $Q_1$ is perfectly balanced. The functor of points of $\bU_{\vec{f}}^{(0)}$ is that of $\bU_{\vec{f}}$, with the extra conditions that $R^1 \pi_* \cHom(\cK_i, p^* Q_i) = 0$ for all $2 \leq i \leq m+1$ and $\cK_{m+1}$ splits as at worst $\vec{f}$. However, these extra conditions are also automatically satisfied by points of $\bU_{\vec{f}}$ due to Lemma \ref{lem:ext_iterated_leq}. So $\bU_{\vec{f}}^{(0)} \cong \bU_{\vec{f}}$. Since $\bU_{\vec{f}} \to \vbst$ is flat and locally finitely presented, it is open. The modular nature of the image of $\bU_{\vec{f}}$ shows that the image must be a union of $\SplittOpen$, where $\vec{e} \geq \vec{f}$ due to Lemma \ref{lem:ext_iterated_leq}. We also know that its image contains $\mathbf{\Sigma}_{\vec{f}}$. So $\bU_{\vec{f}} \to \vbst$ must be surjective onto $\bigcup_{\vec{e} \geq \vec{f}} \SplittOpen$.

To show that $\bU_{\vec{f}}$ is an algebraic space, by Theorem 2.2.5 of \cite{Conrad_2007}, it suffices to check that the geometric points have no nontrivial automorphisms. Given an extension $k \to k'$, a $k'$-point of $\bU_{\vec{f}}$ is the data of vector bundles $K_i$ on $\bP^1_{k'}$, the data of extensions
\[
  0 \to K_i \to K_{i+1} \to Q_{i+1} \otimes_k k' \to 0
\]
for each $1 \leq i \leq m$, and an isomorphism $K_1 \cong Q_1 \otimes_k k'$. The automorphism group of this point is the product of the autormophism groups of each extension. So it suffices to show that each extension has no automorphisms. Lemma \ref{lem:ext_automorphism} below shows that since $\Hom_{\bP^1_{k'}}(Q_{i+1} \otimes_k k', K_{i}) = 0$ for all $1 \leq i\leq m$, the extensions have no nontrivial automorphisms.
\end{proof}

\begin{lemma} \label{lem:ext_automorphism}
Let $X$ be a proper variety over $k$, with coherent sheaves $A, B$, and an extension
\[
  \xi: 0 \to A \xrightarrow{i} E \xrightarrow{q} B \to 0
\]
on $X$. Then there is a map $\Hom(B, A) \to \Hom(E, E)$, by precomposing with $q$ and postcompositing with $i$. There is also a map $\Aut(\xi) \to \Hom(E,E)$ defined by identifying an automorphism of the extension with $\phi: E \to E$, and sending it to $\phi - \operatorname{id}_E$. Both maps are injective and maps onto the same image. In particular, if $\Hom(B, A) = 0$, then $\Aut(\xi)$ is trivial.
\end{lemma}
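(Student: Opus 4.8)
The plan is to work entirely inside the abelian category of coherent sheaves on $X$, so that the properness and variety hypotheses are not actually needed. First I would unwind the definition: an automorphism of the extension $\xi$ is an endomorphism $\phi\colon E \to E$ fitting into a commutative ladder over $\xi$ with the identity on $A$ and on $B$, i.e. $\phi \circ i = i$ and $q \circ \phi = q$; the short five lemma shows any such $\phi$ is automatically invertible, so $\Aut(\xi)$ is a group under composition. Write $\alpha\colon \Hom(B,A) \to \End(E)$, $g \mapsto i \circ g \circ q$, and $\beta\colon \Aut(\xi) \to \End(E)$, $\phi \mapsto \phi - \id_E$, for the two maps in the statement. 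Injectivity of $\alpha$ is immediate: if $i \circ g \circ q = 0$ then $i \circ g = 0$ because $q$ is an epimorphism, and then $g = 0$ because $i$ is a monomorphism. Injectivity of $\beta$ is formal, since $\phi \mapsto \phi - \id_E$ is injective already on underlying sets.

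For the equality $\operatorname{im}\alpha = \operatorname{im}\beta$ I would check both inclusions. Given $g\colon B \to A$, set $\phi = \id_E + i \circ g \circ q$; using $q \circ i = 0$ one computes $\phi \circ i = i$ and $q \circ \phi = q$, so $\phi \in \Aut(\xi)$ and $\beta(\phi) = \alpha(g)$, proving $\operatorname{im}\alpha \subseteq \operatorname{im}\beta$. Conversely, given $\phi \in \Aut(\xi)$, put $\psi = \phi - \id_E$, so $\psi \circ i = 0$ and $q \circ \psi = 0$. Since $\psi$ kills $\operatorname{im}(i) = \ker(q)$, it factors through $q$, say $\psi = \bar\psi \circ q$ with $\bar\psi\colon B \to E$; then $q \circ \bar\psi \circ q = 0$ and $q$ epi give $q \circ \bar\psi = 0$, so $\operatorname{im}(\bar\psi) \subseteq \ker(q) = \operatorname{im}(i)$, and since $i$ is a monomorphism $\bar\psi = i \circ g$ for a unique $g\colon B \to A$. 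Hence $\psi = i \circ g \circ q = \alpha(g)$, proving $\operatorname{im}\beta \subseteq \operatorname{im}\alpha$.

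Finally, the corollary: if $\Hom(B,A) = 0$ then $\operatorname{im}\alpha = \{0\}$, hence $\operatorname{im}\beta = \{0\}$, so $\phi - \id_E = 0$ for every $\phi \in \Aut(\xi)$, and injectivity of $\beta$ forces $\Aut(\xi) = \{\id_E\}$. There is no serious obstacle here; the only point requiring a little care is the double factorization of $\psi$ through $q$ and then through $i$, a routine diagram chase with exactness (equivalently one may chase local sections on an affine cover), together with the implicit appeal to the short five lemma to see that endomorphisms of $E$ commuting with $\xi$ are isomorphisms. As a remark, the same computation shows that composition in $\Aut(\xi)$ corresponds to addition in $\Hom(B,A)$, so $\alpha$ and $\beta$ in fact identify $\Aut(\xi)$ with the group $\Hom(B,A)$, although only the weaker statement is needed in the sequel.
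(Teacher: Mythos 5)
Your proof is correct and follows essentially the same route as the paper's: both arguments come down to factoring $\phi - \id_E$ first through one of $i$, $q$ and then through the other, using $q\phi = q$ and $\phi i = i$; the paper phrases this via left-exactness of the $\Hom$ sequences while you carry out the equivalent diagram chase explicitly. Your additional observations (invertibility via the short five lemma, and that $\alpha^{-1}\circ\beta$ is a group isomorphism onto $(\Hom(B,A),+)$) are accurate but not needed.
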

\begin{proof}
\[
  \begin{tikzcd}
0 \arrow[r] & A \arrow[r, "i"] \arrow[d, no head, double line] & E \arrow[r, "q"] \arrow[d, "\phi"] & B \arrow[r] \arrow[d, no head, double line] & 0 \\
0 \arrow[r] & A \arrow[r, "i"]                                & E \arrow[r, "q"]                   & B \arrow[r]                                & 0
\end{tikzcd}
\]
Since $q \phi = q$, we have $q(\phi - \id_E) = 0$. Since $0 \to \Hom(E,A) \to \Hom(E, E) \to \Hom(E, B)$ is exact, we know that $q \phi = q$ if and only if $\phi-\id_E$ factors through $i: A \to E$. Similarly, $\phi i = i$ if and only if $\phi-\id_E$ also factors through $q: E \to B$. 
\end{proof}

Let $X$ be proper over $\Spec k$, and let $p: X \times \Ext^1_X(A, B) \to X$. Then as explained on Page 118 of \cite{lePotier}, there is a universal extension $0 \to p^* A \to \cE \to p^* B \to 0$ on $X \times_k \Ext^1_X(A, B)$. The $T$-points of $\Ext^1_X(A,B)$ is the set of extensions $0 \to p^* A \to E \to p^* B \to 0$ on $T \times_k X$ modulo isomorphisms. In particular it is the coarse space of the moduli space of extensions of $B$ by $A$, i.e. it has the property that if one has a family of extensions of $A$ by $B$ on $T \times_k X$, then there is a unique map $T \to \Ext^1_X(A, B)$ which corresponds to the isomorphism class of this family of extensions. 

\begin{proposition} \label{prop:coarse}
$U_{\vec{f}}$ is the coarse space of $\bU_{\vec{f}}$. In particular, since $\bU_{\vec{f}}$ is an algebraic space, the natural map $\bU_{\vec{f}} \to U_{\vec{f}}$ is an isomorphism. 
\end{proposition}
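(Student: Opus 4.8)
The plan is to show that $\bU_{\vec f}$ is itself represented by an affine space, built as a tower of vector bundles, and that this affine space is $U_{\vec f}$; the coarse-space phrasing then follows formally from the fact, established in Proposition~\ref{prop:moduli_construction}, that $\bU_{\vec f}$ has no nontrivial automorphisms. More precisely, recall (from the proof of Proposition~\ref{prop:moduli_construction}) that the functor $\bU_{\vec f}$ sends $T$ to the set of iterated extensions $\cK_1\subset\cdots\subset\cK_{m+1}$ on $T\times\bP^1$, with $0\to\cK_i\to\cK_{i+1}\to p^*Q_{i+1}\to 0$, together with a chosen isomorphism $\cK_1\cong p^*Q_1$, and with no further conditions. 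A ``coarse space'' for $\bU_{\vec f}$, in the sense used for $\Ext$-schemes in this subsection, is a scheme to which families of such data on a base $T$ correspond (as morphisms out of $T$) up to isomorphism of the data; since $\bU_{\vec f}$ has no automorphisms, such a coarse space in fact represents the functor $\bU_{\vec f}$, and the natural map $\bU_{\vec f}\to U_{\vec f}$, classifying the tautological family on $\bU_{\vec f}$, is an isomorphism. So it suffices to produce a scheme representing $\bU_{\vec f}$ and identify it with $U_{\vec f}=\mathbb{A}\big(H^1\cEnd(\cO(\vec f))\big)$.

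I would do this by induction on the flag length $m$, the case $m=0$ being the statement that a bundle equipped with an isomorphism to $p^*Q_1$ is rigid, so $\bU_{\vec f}\cong\Spec k=\mathbb{A}(0)$. For the inductive step, let $\vec f'$ be the splitting type of $Q_1\oplus\cdots\oplus Q_m$. Giving a $T$-point of $\bU_{\vec f}$ is the same as giving a $T$-point of $\bU_{\vec f'}$ (the flag $\cK_1\subset\cdots\subset\cK_m$ with its pinning) together with a class in $\Ext^1_{T\times\bP^1}(p^*Q_{m+1},\cK_m)$. By induction $\bU_{\vec f'}$ is represented by $U_{\vec f'}$, which carries a universal flag with top term $\cK_m^{\univ}$; so a $T$-point of $\bU_{\vec f}$ is a map $g\colon T\to U_{\vec f'}$ together with a class in $\Ext^1_{T\times\bP^1}(p^*Q_{m+1},g^*\cK_m^{\univ})$. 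By Lemma~\ref{lem:ext_iterated_leq}, every fiber of $\cK_m^{\univ}$ has all summands of slope at most $\mu(Q_m)<\mu(Q_{m+1})$, so $\pi_*\cHom(p^*Q_{m+1},\cK_m^{\univ})=0$, and by the last assertion of that lemma (with $Q=Q_{m+1}$) the quantity $\dim\Ext^1_{\bP^1}(Q_{m+1},\cK_m)$ is constant on $U_{\vec f'}$, so $\cV:=R^1\pi_*\cHom(p^*Q_{m+1},\cK_m^{\univ})$ is locally free of rank $\dim\Ext^1_{\bP^1}(Q_{m+1},\oplus_{i\le m}Q_i)$. Cohomology and base change then identifies the extension classes over $T$ with lifts of $g$ to the total space $\mathbb{V}(\cV^\vee)$, and le Potier's relative construction over $U_{\vec f'}$ equips $\mathbb{V}(\cV^\vee)$ with a universal extension. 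Hence $\mathbb{V}(\cV^\vee)$ represents $\bU_{\vec f}$.

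It remains to identify $\mathbb{V}(\cV^\vee)$ with $U_{\vec f}$. The filtration of $\cK_m^{\univ}$ induces a filtration of $\cV$ whose graded pieces are the trivial bundles $\Ext^1_{\bP^1}(Q_{m+1},Q_i)\otimes\cO_{U_{\vec f'}}$ (higher $\Ext$ vanishes by relative dimension one, and $\Hom_{\bP^1}(Q_{m+1},Q_i)=0$ by slopes); since $U_{\vec f'}$ is affine, $\cV$ is a successive extension of trivial bundles and hence trivial, so $\mathbb{V}(\cV^\vee)$ is an affine space. Its dimension is $\dim U_{\vec f'}+\rk\cV=u(\vec f')+\dim\Ext^1_{\bP^1}(Q_{m+1},\oplus_{i\le m}Q_i)=\dim H^1\cEnd(\cO(\vec f))$, and the decomposition $H^1\cEnd(\cO(\vec f))\cong\bigoplus_{j=2}^{m+1}\Ext^1_{\bP^1}(Q_j,\oplus_{i<j}Q_i)$ matches the two towers step by step; this gives the identification with $U_{\vec f}$ and completes the induction.

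The main obstacle is the inductive step, and it is genuine rather than bookkeeping: the intermediate sheaf $\cK_m$ is only pinned down as an iterated extension and is in general \emph{not} isomorphic to $\oplus_{i\le m}Q_i$ (it may be strictly more balanced), so one cannot simply read off an $\Ext^1$-class against a fixed bundle on $\bP^1$. One is forced to work with the relative $\underline{\Ext}^1$ over the moduli space $U_{\vec f'}$ of the shorter flag, and it is precisely Lemma~\ref{lem:ext_iterated_leq} -- both the vanishing of the relevant $\Hom$ and the constancy of $\dim\Ext^1$ -- that guarantees this relative $\Ext$ is a vector bundle of the expected rank, so that cohomology and base change applies. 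Once this is in hand, the remaining identification with $\mathbb{A}(H^1\cEnd(\cO(\vec f)))$ is routine.
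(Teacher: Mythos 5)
Your argument is correct and follows essentially the same route as the paper: an induction on the flag length in which the relative $\operatorname{Ext}^1$-sheaf $R^1\pi_*\mathcal{H}om(p^*Q_{i+2},\mathcal{K}_{i+1})$ is shown to be a (trivial) vector bundle via Lemma \ref{lem:ext_iterated_leq} and cohomology and base change, its sections are identified with extension classes using the universal extension, and the resulting tower of affine spaces is matched with $\bigoplus_j \Ext^1(Q_j,\oplus_{i<j}Q_i)=H^1\mathcal{E}nd(\mathcal{O}(\vec f))$. The only (immaterial) difference is one of packaging: the paper builds the tower as coarse spaces of isomorphism classes and invokes the universal property at the end, whereas you use rigidity from Proposition \ref{prop:moduli_construction} up front to let the tower represent $\mathbf{U}_{\vec f}$ directly.
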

\begin{proof}
Let $\cO(\vec{f}) = Q_1 \oplus \cdots \oplus Q_{m+1}$ where $Q_i$ is perfectly balanced and $\mu(Q_i) < \mu(Q_{i+1})$. We realize $U_{\vec{f}}$ as the top of a tower of affine spaces
\[
  U_{\vec{f}} = U_{\vec{f}, m} \to U_{\vec{f}, m-1} \to \cdots \to U_{\vec{f},1},
\]
and inductively show that 
\begin{itemize}
   \item there is an identification $U_{\vec{f},i} \cong \oplus_{k = 1}^{i} \Ext^1(Q_{k+1}, \oplus_{j = 1}^{k} Q_j)$; 
   \item Let $p: U_{\vec{f}, i} \times \bP^1 \to \bP^1$. Then a $T$-point of $U_{\vec{f}, i}$ is the data of isomorphism classes of extensions
    \[
      0 \to \cK_{k} \to \cK_{k+1} \to p^* Q_{k+1} \to 0
    \]
    on $T \times \bP^1$, where $k \leq i$, $\cK_1 \cong p^* Q_1$. 
\end{itemize}
Let $U_{\vec{f},1} = \Ext^1_{\bP^1}(Q_2, Q_1)$, which satisfies the claim. By induction, assume that the claim is true for $U_{\vec{f},i}$. Abusing notation, let \[
  0 \to \cK_{k} \to \cK_{k+1} \to p^* Q_{k+1} \to 0
\] where $1 \leq k \leq i$ denote the isomorphism classes of extensions on $U_{\vec{f},i} \times \bP^1$ corresponding to the identity map for $U_{\vec{f},i}$. 

Now we let $U_{\vec{f},i+1}$ be the total space of the vector bundle $\cV = R^1\pi_* \cHom(p^* Q_{i+2}, \cK_{i+1})$. Note that by Lemma \ref{lem:ext_iterated_leq} and cohomology and base change, $\cV$ is a vector bundle. A $T$-point of $U_{\vec{f},i+1}$ is the data of a morphism $q: T \to U_{\vec{f}, i}$ and a global section of $q^* \cV = R^1\nye{\pi}_* \nye{q}^* \cHom(p^* Q_{i+2}, \cK_{i+1})$. (The relevant maps are labelled in Figure \ref{fig:Uf-diagram}.)
\begin{figure}
\[
\begin{tikzcd}
T \times \bP^1 \arrow[r, "\nye{q}"] \arrow[d, "\nye{\pi}"] & {U_{\vec{f},i} \times \bP^1} \arrow[r, "p"] \arrow[d, "\pi"] & \bP^1 \\
T \arrow[r, "q"]                                           & {U_{\vec{f},i}}                                              &      
\end{tikzcd}
\]
\caption{Relevant maps for the proof of Proposition \ref{prop:coarse}}
\label{fig:Uf-diagram}
\end{figure}
Since \[R\cHom(\nye{q}^*p^* Q_{i+2}, \nye{q}^*\cK_{i+1}) = \cHom(\nye{q}^*p^* Q_{i+2}, \nye{q}^*\cK_{i+1}),\] and $R \nye{\pi}_* \cHom(\nye{q}^*p^* Q_{i+2}, \nye{q}^*\cK_{i+1})$ is only nonzero for $R^1\nye{\pi}_*$, we have that
\[
  H^0(T, q^* \cV) \cong 
  H^0(T, 
  R^1\nye{\pi}_* \cHom(\nye{q}^*p^* Q_{i+2}, \nye{q}^*\cK_{i+1})
  ) 
  \cong \Ext^1_{T \times \bP^1}(\nye{q}^*p^* Q_{i+2}, \nye{q}^*\cK_{i+1}).
\]
In particular, a section of $q^* \cV$ is indeed an isomorphism class of an extension 
\[
  0 \to \cK_{i+1} \to \cK_{i+2} \to \nye{q}^*p^* Q_{i+2} \to 0
\]
on $T \times \bP^1$. Combined with the interpretation of a $T$-point of $U_{\vec{f},i}$, this proves that the functor of points of $U_{\vec{f},i+1}$ is as claimed by the inductive hypothesis.  

Moreover, since $U_{\vec{f},i}$ is in fact affine space, $\cV$ is trivial and may be noncanonically identified with the fiber at the point of $U_{\vec{f},i}$ that corresponds to the split extensions, where the fiber is $\Ext^1(Q_{i+2}, \oplus_{j = 1}^{i+1} Q_i)$. Hence $U_{\vec{f},i+1}$ is isomorphic to the affine space $\oplus_{k = 1}^{i+1} \Ext^1(Q_{k+1}, \oplus_{j = 1}^{k} Q_j)$. 

Letting $i = m$, the claim shows that the functor of points of $U_{\vec{f}}$ is that of $\bU_{\vec{f}}$ quotiented by automorphisms. In particular, $U_{\vec{f}}$ is the coarse space of $\bU_{\vec{f}}$. However, since the latter is an algebraic space, $\bU_{\vec{f}} \to U_{\vec{f}}$ is an isomorphism by the universal property of coarse spaces.
\end{proof}

\begin{proof}[Proof of $(2) \Rightarrow (6) \Rightarrow (1)$ of Proposition \ref{prop:equiv_leq}]
(6) $\Rightarrow$ (1): Recall that $U_{\vec{f}}$ is just an affine space with $\Sigma_{\vec{f}} = \{0\}$. (6) says that there is a point in $U_{\vec{f}}$ corresponding to the extension of type $\vec{f}$ realizing $\vec{e}$. By considering a line connecting this point to $0$, we find the desired family in (1). 

(2) $\Rightarrow$ (6): This follows from Proposition \ref{prop:moduli_construction} and the discussion about the image of $\bU_{\vec{f}}$. 
\end{proof}

\subsection{Fitting support scheme structure} 
We review in this subsection the Fitting support scheme structure on splitting loci, which is known to be reduced. 

Let $\cF$ be a finitely presented coherent sheaf on a locally Noetherian scheme $X$, and let 
\[
  \cE_1 \xrightarrow{\Phi} \cE_0 \to \cF \to 0
\]
be one such presentation, where $\cE_0, \cE_1$ are locally free of rank $n, m$. Then $\text{Fitt}_i(\cF) \subseteq \cO_X$ is the ideal sheaf generated by the $(n-i)$-minors of $\Phi$. The Fitting ideal $\text{Fitt}_i(\cF)$ is independent of the choice of presentation (e.g. Corollary 20.4 of \cite{Eisenbud1995}). 

The Fitting support scheme strucutre on $\Splitt$ is given by the ideal sheaf $\sum_{m \in \Z} \text{Fitt}_{i_m}(R^1 \pi_* \cE(m))$, where $i_m = h^1 \cO(\vec{e})(m) - 1$ and $\pi: B \times \bP^1 \to B$ is the projection. This is the most common scheme structure used in the literature. Condition (3) of Proposition \ref{prop:equiv_leq} shows that this ideal cuts out the right closed points. The following result shows that the Fitting support scheme structure agrees with the reduced closed subscheme structure.

\begin{theorem}[\cite{LLV}] \label{thm:LLVnormality}The Fitting support scheme structure on splitting loci is normal and Cohen-Macaulay, and in particular reduced. 
\end{theorem}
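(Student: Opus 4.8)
Since normality and Cohen--Macaulayness are smooth-local, and Fitting ideals commute with arbitrary base change (\cite[Cor.~20.4]{Eisenbud1995}), the plan is to present the Fitting support scheme of a splitting locus, on a smooth atlas, as a degeneracy locus of a single map of vector bundles over a smooth base. Twisting by $\cO(n)$ for $n\gg0$ identifies $\vbst$ with $\cB_{r,d+rn}$ and merely shifts all splitting types, so we may assume every $\pvec{e}'\le\vec e$ has nonnegative parts. On the affine-space chart $U_{\vec f}$ of Subsection~\ref{subsec:sm_cover}, with $\vec f$ so unbalanced that $U_{\vec f}\to\vbst$ dominates $\Splitt$ -- finitely many such charts cover the quasi-compact substack $\Splitt$ -- the universal bundle $\cE$ on $U_{\vec f}\times\bP^1$ is fiberwise globally generated with vanishing $H^1$. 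Writing $p,\pi$ for the two projections and $\cG=\pi_*\cE(-1)$, $\cH=\pi_*\cE$, the kernel of the evaluation map $\pi^*\cH\to\cE$ is $p^*\cO(-1)\otimes\pi^*\cG$, so we obtain an exact sequence of vector bundles
\[
  0\longrightarrow p^*\cO(-1)\otimes\pi^*\cG\xrightarrow{\ \Phi\ }\pi^*\cH\longrightarrow\cE\longrightarrow0.
\]
Pushing forward the $\cO(m)$-twist of this sequence realizes, for each $m\le-1$, the sheaf $R^1\pi_*\cE(m)$ as the cokernel of a map $\phi_m$ of vector bundles on $U_{\vec f}$ built functorially from the single map $\Phi$; since $h^1\cO(\vec e)(m)$ agrees with its generic value outside a bounded range of $m$, only finitely many of the resulting Fitting conditions are nonvacuous, and the Fitting support scheme becomes an intersection of finitely many determinantal schemes inside the smooth variety $U_{\vec f}$, all of whose matrices are extracted from $\Phi$. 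Under the identification of globally generated rank-$r$, degree-$d$ bundles on $\bP^1$ with preprojective representations of the Kronecker quiver of dimension vector $(d,d+r)$, this is exactly the orbit closure of a semisimple representation.

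Because $R^1\pi_*\cE(m)$ is locally free precisely over $\SplittOpen$, the Fitting scheme restricted to $\SplittOpen$ is $\SplittOpen$ with its smooth reduced structure; hence the Fitting scheme is generically reduced with support $\Splitt$. A standard computation of stack dimensions shows that $\SplittOpen$, hence $\Splitt$, has everywhere codimension $u(\vec e)=h^1\cEnd(\cO(\vec e))$ in $\vbst$, i.e.\ the expected codimension. One then resolves $\cO_{\Splitt}$ by locally free sheaves on $U_{\vec f}$: the compatibility of the maps $\phi_m$, all induced from $\Phi$, organizes the determinantal conditions into a single Buchsbaum--Rim--type complex, which the expected-codimension statement forces to be acyclic of length $u(\vec e)$, just as for orbit closures of representations of tame quivers. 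Thus $\Splitt$ is perfect, hence Cohen--Macaulay, over the smooth $U_{\vec f}$; being also generically reduced, it is reduced.

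For normality it remains, by Serre's criterion, to prove regularity in codimension one. The codimension-one points of $\Splitt$ are the generic points of boundary strata $\overline{\Sigma}_{\pvec{e}'}$ with $u(\pvec{e}')=u(\vec e)+1$; at such a point the Zariski tangent space of the Fitting scheme can be computed directly from $\Phi$ -- equivalently, from the $\Ext^1$-pairing that governs deformations of $\cO(\pvec{e}')$ inside $\cB_{r,d}$ -- and one checks its dimension equals $\dim\Splitt$, so the point is regular. Together with the $S_2$ property from the previous paragraph, $\Splitt$ is normal.

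The main obstacle is that $\Splitt$ is not a single determinantal locus but a non-transverse intersection of several, so neither the acyclicity of the resolving complex (Cohen--Macaulayness) nor the codimension-one tangent-space computation (regularity in codimension one) is formal; both ultimately rest on importing the known homological structure of orbit closures for representations of the Kronecker quiver into the splitting-locus setting, which is the technical heart of the argument.
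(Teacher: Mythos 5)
First, a point of comparison: the paper does not prove this statement. Theorem \ref{thm:LLVnormality} is quoted from \cite{LLV} and used as an input, so there is no internal argument to measure yours against; I am judging your sketch as a standalone proof. Your setup is sound as far as it goes: reducing to the charts $U_{\vec f}$ after twisting so that all relevant splitting types are nonnegative, presenting each $R^1\pi_*\cE(m)$ as the cokernel of a Hankel-type matrix extracted from the single Str{\o}mme map $\Phi$, and observing that splitting loci are, up to smooth morphism, orbit closures for the two-arrow Kronecker quiver are all correct (though a direct sum of preprojective indecomposables is not a \emph{semisimple} representation). The difficulties are concentrated in the two steps that carry all the weight, and neither is actually carried out.

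For Cohen--Macaulayness you assert that the maps $\phi_m$ ``organize into a single Buchsbaum--Rim--type complex, which the expected-codimension statement forces to be acyclic.'' No such complex is constructed, and none is known: $\splitt$ is an intersection of determinantal loci defined by minors of different sizes in different (albeit related) matrices, and the Buchsbaum--Eisenbud criterion can only certify acyclicity of a complex you already have in hand --- expected codimension does not produce one. Writing down a resolution of $\cO_{\splitt}$ over $U_{\vec f}$ is essentially the open problem left by the Eisenbud--Schreyer description, not a routine step. Your fallback --- importing normality and Cohen--Macaulayness from the quiver orbit-closure literature (Bobi\'nski--Zwara for the tame quiver $\widetilde A_1$) --- is a legitimate alternative strategy, but it renders the rest of the sketch moot and leaves a genuine gap of its own: those results concern the \emph{reduced} orbit closure, whereas the theorem is about the Fitting scheme structure, so one must still prove that $\sum_m \Fitt_{i_m}(R^1\pi_*\cE(m))$ is radical, which is a large part of the content. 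Relatedly, the $R_1$ step is only asserted: codimension-one boundary strata do occur (e.g. $\Sigma_{(-1,1,2)}\subset \overline{\Sigma}_{(0,0,2)}$ has codimension one, since $u$ jumps from $2$ to $3$), and regularity of the Fitting scheme there, as well as generic reducedness along $\Sigma_{\vec e}$ (the statement that $R^1\pi_*\cE(m)$ is locally free exactly on $\Sigma_{\vec e}$ identifies supports, not scheme structures), each require an explicit tangent-space or Jacobian computation with the Fitting equations. Finally, a sanity check that should give you pause: quiver orbit closures in characteristic zero typically come with rational singularities, so if the import were as routine as claimed, the main theorem of the present paper would follow for all splitting types rather than only tame ones; this tension indicates that the transfer from the quiver setting is subtler than your sketch allows.
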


The Fitting support scheme structure is more amenable to computations. By working out a two-term complex of free modules representing the direct image $R \pi_* \cE(m)$, Corollary 5.4 of \cite{Eisenbud-Schreyer} writes down explicit equations for splitting loci inside $U_{\vec{e}_d}$, where $\vec{e}_d = (0^{r-1}, d)$.
Notably, in the rank $2$ case, splitting loci are the affine cones over secant varieties of the rational normal curve.

\begin{example}[Rank $2$ splitting loci, \cite{Eisenbud-Schreyer}] 
Let $d > 1$ be a positive integer. Let $\Spec A$ be the affine space corresponding to the vector space $\Ext^1_{\bP^1}(\cO(d), \cO)$, with coordinates $a_0, \dots, a_{d-2}$. Let $\cE$ be the universal vector bundle on $\bP^1_A$. Then for $1 \leq k \leq d-1$, the following complex represents $R \pi_* \cE(-k-1)$:
\[
  0 \to A^{d-k} \xrightarrow{B_{k, d-k}} A^{k} \to 0,
\]
where $B_{k, d-k}$ is the $k \times (d - k)$ Hankel matrix
\[
  B_{k, d-k} = 
  \begin{bmatrix}
      a_0 & a_1 & \cdots & a_{d-k-1} \\ 
      a_1 & a_2 & \cdots & a_{d-k} \\ 
      \vdots & \vdots & \vdots & \vdots \\ 
      a_{k-1} & a_k & \cdots & a_{d-2}
  \end{bmatrix}.
\]
Let $\frac{d}{2}+1 \leq e < d$, so that $(d-e, e)$ is an unbalanced splitting type that appears in $\Ext^1(\cO(d), \cO)$ and is not $(0, d)$. Then one can check using the Fitting support definition that the ideal $I_{\overline{\Sigma}_{(d-e, e)}}$ is precisely the ideal of maximal minors $I_{d-e+1}(B_{d-e+1, e-1})$. It is well-known that the latter is the homogeneous ideal of $\Sec^{d-e-1}(C_{d-2})$, the $(d-e-1)$-th secant variety of the rational normal curve of degree $d-2$ in $\bP^{d-2}$ (e.g. \cite{CONCA2018111}). In particular, when $d-e = 1$, $I_{\overline{\Sigma}_{(1,d-1)}} = I_2(B_{2, d-2})$ and $\overline{\Sigma}_{(1,d-1)} \subset \Ext^1(\cO(d), \cO)$ is the cone over the rational normal curve of degree $d-2$. The case of $\overline{\Sigma}_{(1,3)} \subseteq \Ext^1(\cO(4), \cO)$, along with the resolution of singularities we construct, is illustrated in Figure \ref{fig:resolution_sigma13}.

\begin{figure}
  \centering
  \begin{tikzpicture}[scale = 0.7]
    \definecolor{labelColor}{RGB}{112,127,151}
    \definecolor{iris}{RGB}{129, 102, 138}
  \begin{axis}[
    name=left,
    hide axis,
  view={-20}{20},
  domain=-1.5:1.5,
  y domain=0:-2*pi,
  xmin=-1.5, xmax=1.5,
  ymin=-1.5, ymax=1.5, 
  zmin=-1, zmax = 1,
  samples=40,
  samples y=40,
  z buffer=sort,
  width=10cm,
  height=9.5cm,
  ticks=none
  ]
    \addplot3[surf,
    mesh/interior colormap=
       {blueblack}{color=(Lavender) color=(LightSteelBlue)},
    mesh/interior colormap thresh=1,
    shader = faceted interp,
    faceted color = none,
    samples = 30,
    ]
  ({x},{0.66*cos(deg(y))},{0.66*sin(deg(y))});

  \addplot3[variable=t,domain={pi/2}:{3*pi/2},mesh,color=iris] ({0},{0.66*cos(deg(t))},{0.66*sin(deg(t))});
  \addplot3[variable=t,domain={pi/2}:{-pi/2},mesh,dash pattern=on 1pt off 2pt, color=iris] ({0},{0.66*cos(deg(t))},{0.66*sin(deg(t))});

  \draw (0,0,0) 
      node at (-0.3,0.03,0) [iris] {(3) $\hookrightarrow$ (0,4)};

  \draw (0,0,0) 
      node at (0.8,-1, 0.35) [labelColor] {(3) $\hookrightarrow$ (1,3)};
  \end{axis}

  \begin{axis}[
    name=right,
    at={(left.east)}, 
    anchor=west,
    xshift=2cm,
    hide axis,
  view={-20}{20},
  domain=0:1.5,
  y domain=0:-2*pi,
  xmin=-1.5, xmax=1.5,
  ymin=-1.5, ymax=1.5, 
  zmin=-1, zmax = 1,
  samples=40,
  samples y=40,
  z buffer=sort,
  width=10cm,
  height=8cm,
  ticks=none
  ]
    \coordinate (L1) at (-1.5, -1.5, -1);
  \coordinate (L2) at (-1.5, -1.5, 1);
  \coordinate (L3) at (-1.5, 1.5, 1);
  \coordinate (L4) at (-1.5, 1.5, -1);
  \coordinate (R1) at (1.5, -1.5, -1);
  \coordinate (R2) at (1.5, -1.5, 1);
  \coordinate (R3) at (1.5, 1.5, 1);
  \coordinate (R4) at (1.5, 1.5, -1);

  \draw[-] (L3) -- (L4);
  \draw[-] (L4) -- (L1);
  \draw[-] (R1) -- (R2);
  \draw[-] (R2) -- (R3);
  \draw[-] (R3) -- (R4);
  \draw[-] (R4) -- (R1);

  \draw[-] (R3) -- (L3);
  \draw[-] (R4) -- (L4);

  \addplot3[surf,
      mesh/interior colormap=
         {blueblack}{color=(Lavender) color=(LightSteelBlue)},
      mesh/interior colormap thresh=1,
      shader=interp,
      samples = 30,
      ]
    ({-x},{0.66*x*cos(deg(y))},{0.66*x*sin(deg(y))});
  \addplot3[surf, 
      mesh/interior colormap=
         {blueblack}{color=(Lavender) color=(LightSteelBlue)},
      mesh/interior colormap thresh=1,
      shader=interp,
      samples = 30,
      ]
  ({x},{0.66*x*cos(deg(y))},{0.66*x*sin(deg(y))});

  \draw (0,0,0) 
      node at (0,0,0) [fill = iris, circle, minimum size = 0.1em] {}
      node at (0,0.05,0.25) [iris] {(0,4)};

  \draw (-1,-1,0)
      node at (0.8,-1, 0.2) [minimum size = 0.3em, labelColor] {(1,3)};

  \draw (0,0,-1) 
      node at (0,0,-0.8) [minimum size = 0.3em] {(2,2)};

  \draw[-] (L1) -- (L2);
  \draw[-] (L2) -- (L3);

  \draw[-] (R1) -- (L1);
  \draw[-] (R2) -- (L2);
  \end{axis}
  \draw[-latex,thick]
    (left.east) -- node[midway, below]{$\rho$} +(1.5,0);
\end{tikzpicture}
  \caption{The splitting locus $\overline{\Sigma}_{(1,3)}$ in $\Ext^1(\cO(4), \cO) \cong \mathbb{A}^3$ and its resolution by the relative Quot scheme parametrizing $\cO(3)$-subsheaves}
  \label{fig:resolution_sigma13}
\end{figure}
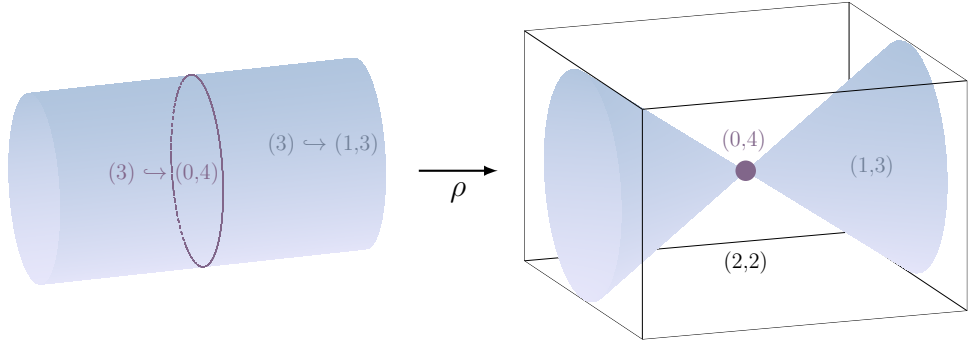
\end{example}

\section{A resolution of singularities of \texorpdfstring{$\Splitt$}%
     {splitting loci}} \label{sec:construction_of_the_resolution}
We describe the construction of a resolution of singularities for $\Splitt$. This was initially inspired by the use of Quot schemes to compute the classes of splitting loci in \cite{larson_degeneracy}. 

As explained in the introduction, for any given $\vec{e}$, there may be multiple choices of resolutions. Specifically they are indexed by certain $\vec{e}$-admissible sets $I \subseteq [m]$, where $m$ is the number of subbundles in the Harder-Narasimhan flag of $\cO(\vec{e})$.

\begin{definition} Let $B$ an algebraic stack over $k$, and let $\cE$ be a vector bundle of rank $r$ on $B \times_{k} \bP^1$. Let $\ur = (r_1, \dots, r_m)$, $\ud = (d_1, \dots, d_m)$, where $r \geq r_1 \geq r_2 \geq \cdots \geq r_m \geq 0$, $d_i \in \Z$. We define $\FQuot(B, \cE; \ur, \ud) \to B$ to be the functor
\begin{align*}
  \left (T \to \Spec k \right ) 
  & \mapsto 
  \left \{
    \begin{tabular}{lll}
    \text{A morphism $T \xrightarrow{f} B$ over $k$, and a flag of quotients} \\ 
    \text{$f^* \cE \twoheadrightarrow \cQ_{1}
    \twoheadrightarrow \cQ_{2} 
    \twoheadrightarrow \cdots 
    \twoheadrightarrow \cQ_m$ on $T \times \bP^1$, such that $\cQ_{i}$ is} \\ 
    \text{flat over $T$, and fiberwise of rank $r_i$ and degree $d_i$}
    \end{tabular}
  \right\} / \sim, 
\end{align*}
where we say that two flags are equivalent if there are isomorphisms $\cQ_i \to \cQ_i'$ making the following diagram commute:
\[
\begin{tikzcd}
f^* \cE \arrow[r] \arrow[d, no head, double line] & \cQ_1 \arrow[r] \arrow[d] & \cQ_{2} \arrow[r] \arrow[d] & \cdots \arrow[r] & \cQ_m \arrow[d] \\
f^* \cE \arrow[r]                                & \cQ_1' \arrow[r]          & \cQ_{2}' \arrow[r]          & \cdots \arrow[r] & \cQ_m'         
\end{tikzcd}.
\]
\end{definition}
The abbreviation $\FQuot$ stands for flag Quot, also called hyper-Quot. The map $\FQuot \to B$ is representable by schemes due to the theory of Quot schemes. So if $B$ is an algebraic stack, then for any choice of $\ur, \ud$, $\FQuot(B, \cE; \ur, \ud)$ is an algebraic stack. When $m = 1$ and $\ur = (r_1), \ud = (d_1)$, this is the usual relative Quot, which we denote by $\Quot^{r_1,d_1}(B, \cE)$, or $\Quot^{r_1,d_1}(\cE)$ if the base is clear.

Applying this definition to the pair $\vbst$ and the universal vector bundle $\cE_{\univ}$ on $\vbst \times \bP^1$, we get $\FQuot(\vbst, \cE_{\univ}; \ur, \ud) \to \vbst$, which is the algebraic stack defined by the functor
\begin{align*}
  \left (T \to \Spec k \right ) 
  & \mapsto 
  \left \{
    \begin{tabular}{lll}
    \text{A flag of quotients $\cE \twoheadrightarrow \cQ_{1}
    \twoheadrightarrow \cQ_{2} 
    \twoheadrightarrow \cdots 
    \twoheadrightarrow \cQ_m$ on $T \times \bP^1$, flat over $T$,} \\ 
    \text{such that $\cE$ is a vector bundle of fiberwise rank $r$ and degree $d$, } \\ 
    \text{and $\cQ_{i}$ is a coherent sheaf fiberwise of rank $r_i$ and degree $d_i$}
    \end{tabular}
  \right\} / \sim \\ 
  & \cong 
  \left \{
    \begin{tabular}{lll}
      \text{A flag of subsheaves $\cE_1 \hookrightarrow \cE_2 \hookrightarrow \dots \hookrightarrow \cE_{m+1} = \cE$ on $T \times \bP^1$, flat over $T$,} \\
      \text{such that $\cE$ is a vector bundle of fiberwise rank $r$ and degree $d$, } \\
      \text{and $\cE_{i}$ is locally free of rank $r-r_i$ and fiberwise degree $d-d_i$}
    \end{tabular}
  \right\} / \sim. 
\end{align*}
Here, the equivalence relation on flags of subsheaves is induced by the one on flags of quotients and the correspondence between flags of quotients and subsheaves. Let $\vec{e}$ be an arbitrary splitting type of rank $r$ and degree $d$. Let the Harder-Narasimhan filtration of $\cO(\vec{e})$ be 
\[
  E_1 \subset E_2 \subset \cdots \subset E_{m+1} = \cO(\vec{e}),
\]
and let a corresponding flag of quotients be 
\[
  \cO(\vec{e}) \twoheadrightarrow Q_1 \twoheadrightarrow Q_{2} \twoheadrightarrow \cdots \twoheadrightarrow Q_m.
\]
Let $d_{i,HN} = \deg Q_i$, $r_{i,\HN} = \rk Q_i$, and let $\ur_{\HN} = (r_{1,\HN}, \dots, r_{m,\HN})$, $\ud_{\HN} = (d_{1,\HN}, \dots, d_{m,\HN})$.

\begin{definition} Given a splitting type $\vec{e}$ and $I = \{i_1, \dots, i_k\} \subseteq [m]$, let $\ur_{I,\HN} = (r_{i_1, \HN}, \dots, r_{i_k, \HN})$, and let $\ud_{I,\HN} = (d_{i_1, \HN},\dots, d_{i_k, \HN})$. Let $E_{i_0} = 0, E_{i_{k+1}} = \cO(\vec{e})$. We say that $I$ is $\vec{e}$-admissible if for all $1 \leq j \leq k$, $E_{i_j}/E_{i_{j-1}}$ is balanced.
\end{definition}

\begin{example} \label{ex:e-admissible-sets}
\begin{enumerate}
  \item $I = \emptyset$ is $\vec{e}$-admissible if $\cO(\vec{e})$ is balanced.
  \item If the distinct entries of $\vec{e}$ are all at least $2$ apart, then the only $\vec{e}$-admissible index set is $I = [m]$.
  \item Consider $\vec{e} = (-2,-2,-1,-1,0,0)$. Then $\ur_{\HN} = (4,2), \ud_{\HN} = (-6,-4)$, and the HN flag of subsheaves is 
  \[
    E_1 = \cO(0,0) \hookrightarrow E_2 = \cO(-1,-1,0,0) \hookrightarrow \cO(\vec{e}).
  \]
  The empty set is not $\vec{e}$-admissible, but $\{1\}, \{2\}, \{1,2\}$ are all $\vec{e}$-admissible.
\end{enumerate}
\end{example}

Recall the definition of tame splitting types from the introduction. It's clear that $\vec{e}$ is tame if and only if there exists $I$ which is $\vec{e}$-admissible and $|I| \leq 1$. 

\begin{definition}
We define $\SplittNyeI = \FQuot(\vbst, \cE_{\univ}; \ur_{I, \HN}, \ud_{I, \HN})$, and $\SplittNyeHN = \FQuot(\vbst, \cE_{\univ}; \ur_{\HN}, \ud_{\HN})$.
\end{definition}

\begin{theorem} \label{thm:smoooth}
Let $\vec{e}$ be a splitting type of rank $r$ and degree $d$, and let $I$ be $\vec{e}$-admissible. Then $\SplittNyeI$ is a smooth and irreducible algebraic stack, proper over $\vbst$. The natural map $\SplittNyeI \to \vbst$ is surjective onto $\Splitt$, and is an isomorphism over $\SplittOpen$. In particular, $\dim \SplittNyeI =-r^2 - u(\vec{e})$.
\end{theorem}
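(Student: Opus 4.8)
The plan is to deduce the smoothness, irreducibility, properness and the description of $\SplittNyeI\to\vbst$ from Theorem~\ref{thm:smooth1}, after checking that its hypotheses apply to the Harder--Narasimhan sub-flag determined by an admissible $I$, and then to read off the dimension formula from Riemann--Roch on $\bP^1$.

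First I would set up the flag. Write the Harder--Narasimhan filtration of $\cO(\vec{e})$ as $0=E_0\subset E_1\subset\cdots\subset E_{m+1}=\cO(\vec{e})$, and for $I=\{i_1<\cdots<i_k\}$ consider the sub-flag $E_{i_1}\subset\cdots\subset E_{i_k}\subset\cO(\vec{e})$. Since $r_{i_j,\HN}=\rk(\cO(\vec{e})/E_{i_j})$ and $d_{i_j,\HN}=\deg(\cO(\vec{e})/E_{i_j})$ by definition, the tuples $\ur_{I,\HN}$ and $\ud_{I,\HN}$ are exactly the ranks and degrees of the quotients $\cO(\vec{e})/E_{i_j}$ of this sub-flag, so $\SplittNyeI=\FQuot(\vbst,\cE_{\univ};\ur_{I,\HN},\ud_{I,\HN})$ is precisely the hyper-Quot stack attached by Theorem~\ref{thm:smooth1} to that sub-flag. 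It then remains to verify the hypotheses of that theorem. Setting $E_{i_0}=0$ and $E_{i_{k+1}}=\cO(\vec{e})$, each successive quotient $E_{i_j}/E_{i_{j-1}}$ is nonzero (since $i_{j-1}<i_j$) and balanced (this is $\vec{e}$-admissibility). For the remaining hypothesis --- that each inclusion in the sub-flag is, up to automorphisms of the source, the unique injective map between the two bundles --- I would use that every bundle $E_{i_{j-1}}$ appearing as a source is a \emph{term} of the Harder--Narasimhan filtration, so its line-bundle summands are the copies of $\cO(v)$ for $v$ among the top $i_{j-1}$ distinct values of $\vec{e}$, whereas every summand of $E_{i_j}/E_{i_{j-1}}$ is a copy of $\cO(v)$ with $v$ strictly smaller; consequently $\Hom(E_{i_{j-1}},E_{i_j}/E_{i_{j-1}})=0$. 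Hence any injective $\phi\colon E_{i_{j-1}}\to E_{i_j}$, composed with the quotient $E_{i_j}\twoheadrightarrow E_{i_j}/E_{i_{j-1}}$, vanishes, so $\phi$ factors as $E_{i_{j-1}}\xrightarrow{\psi}E_{i_{j-1}}\hookrightarrow E_{i_j}$ with $\psi$ injective; but an injective endomorphism of a vector bundle on $\bP^1$ is an isomorphism (it is an isomorphism at the generic point, so its cokernel is torsion of degree zero, hence zero), so $\psi\in\Aut(E_{i_{j-1}})$. Thus any two injective maps $E_{i_{j-1}}\to E_{i_j}$ differ by an element of $\Aut(E_{i_{j-1}})$, as needed, and Theorem~\ref{thm:smooth1} applies: $\SplittNyeI$ is a smooth, irreducible algebraic stack proper over $\vbst$, the morphism $\SplittNyeI\to\vbst$ is surjective onto $\Splitt$, and it is an isomorphism over the dense open $\SplittOpen$.

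It remains to compute the dimension. As $\SplittNyeI$ is irreducible and the proper morphism $\SplittNyeI\to\Splitt$ restricts to an isomorphism over the dense open $\SplittOpen$, the preimage of $\SplittOpen$ is a dense open substack of $\SplittNyeI$ isomorphic to it; hence $\dim\SplittNyeI=\dim\SplittOpen$. Over an algebraically closed field $\SplittOpen$ has a single point with automorphism group $\Aut(\cO(\vec{e}))$ (a smooth group scheme, since $\mathrm{char}\,k=0$), so $\dim\SplittOpen=-\dim\Aut(\cO(\vec{e}))=-h^0 \cEnd(\cO(\vec{e}))$, and by Riemann--Roch on $\bP^1$ we have $h^0 \cEnd(\cO(\vec{e}))=\chi(\cEnd(\cO(\vec{e})))+h^1 \cEnd(\cO(\vec{e}))=r^2+u(\vec{e})$. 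Therefore $\dim\SplittNyeI=-r^2-u(\vec{e})$.

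The substantive content is entirely inside Theorem~\ref{thm:smooth1}; in the reduction above, the only point requiring genuine care is the uniqueness of the inclusions $E_{i_{j-1}}\hookrightarrow E_{i_j}$, which is handled by the Harder--Narasimhan gap and the resulting $\Hom$-vanishing. Were one to prove Theorem~\ref{thm:smooth1} from scratch instead, the main obstacle would be the smoothness: one base-changes $\FQuot\to\vbst$ along the smooth cover $\bU_{\vec{f}}\to\vbst$ of Section~\ref{subsec:sm_cover}, shows that the relevant obstruction spaces vanish using the balancedness of the successive quotients $E_{i+1}/E_i$, and then deduces irreducibility from that of the affine spaces $\bU_{\vec{f}}$ together with the isomorphism over $\SplittOpen$.
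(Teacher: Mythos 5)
Your reduction of Theorem~\ref{thm:smoooth} to Theorem~\ref{thm:smooth1} is executed correctly in the two places where you actually do work: the verification that the sub-flag $E_{i_1}\subset\cdots\subset E_{i_k}\subset\cO(\vec{e})$ attached to an admissible $I$ has balanced nonzero quotients and unique inclusions up to automorphisms (your $\Hom$-vanishing argument is exactly the paper's Proposition~\ref{prop:one_more_cond_equiv_leq}), and the dimension count $\dim\SplittOpen=-h^0\cEnd(\cO(\vec{e}))=-r^2-u(\vec{e})$. The problem is that this reduction is circular relative to the paper: Theorem~\ref{thm:smooth1} is stated only in the introduction and is \emph{proved by} Theorem~\ref{thm:smoooth} (Section~\ref{sec:construction_of_the_resolution} is its proof), so all of the substantive content --- smoothness, irreducibility, and surjectivity onto $\Splitt$ (which the paper only obtains \emph{after} irreducibility, via the equivalence $(5')\Leftrightarrow(2)$) --- is deferred to a statement that has no independent proof.

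Your closing sketch of how one would prove Theorem~\ref{thm:smooth1} from scratch does not fill this gap, and in fact mischaracterizes both halves of the argument. Irreducibility does \emph{not} follow from irreducibility of the charts $\bU_{\vec{f}}$ together with birationality over $\SplittOpen$: a priori $\FQuot$ could have extra components supported over deeper strata, and the fibers there are genuinely reducible with components of different dimensions (see the examples in the subsection on fibers of the resolution). Ruling this out is the content of the stratification by pairs $(\vec{a},\vec{e})$ and the codimension estimates of Lemmas~\ref{lem:eb}, \ref{lem:ue-ext-inequality} and Propositions~\ref{prop:codim_count}, \ref{prop:irred_onestep}, which show that over a good base exactly one stratum attains the expected codimension. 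Likewise, smoothness is not obtained by showing that obstruction spaces vanish; the paper instead computes $\dim\T_{[\alpha]}\nye{U}_{\vec{e},\pvec{e}'}$ at every point over the deepest stratum of $U_{\pvec{e}'}$ via the five-term exact sequence
\[
0 \to \oplus_i \End(E_i) \to \oplus_i \Hom(E_i,E_{i+1}) \to \T_{[\alpha]}\nye{U}_{\vec{e},\pvec{e}'} \to \oplus_i H^1\cEnd(E_i) \to \oplus_i \Ext^1(E_i,E_{i+1}) \to 0,
\]
and compares the answer $u(\pvec{e}')-u(\vec{e})$ with the dimension of the stack, which is only known because irreducibility has already been established; the balancedness of the quotients $E_{i+1}/E_i$ enters precisely to identify $\sum_i h^1\cHom(E_i,E_{i+1}/E_i)$ with $u(\vec{e})$. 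So the argument you would need to supply is an induction on the dominance order combined with a tangent-space dimension count, not an unobstructedness statement.
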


The stack $\SplittNyeI$ is proper over $\vbst$ because relative Quot constructions are proper over the base. Let $I$ be $\vec{e}$-admissible. We say that $\cO(\vec{f})$ admits a flag of subbundles of type $(\vec{e}, I)$ if there exists $E_{i_1}' \subset \cdots \subset E_{i_m}' \subset \cO(\vec{f})$ such that $\rk E_{i_j}' = r-r_{i_j}$, and $\deg E_{i_j}' = d-d_{i_j}$. We add one more equivalent condition to the list in Proposition \ref{prop:equiv_leq}.
\begin{proposition} \label{prop:one_more_cond_equiv_leq}
Let $I$ be $\vec{e}$-admissible. Then $\vec{f} \leq \vec{e}$ if and only if $\cO(\vec{f})$ admits a flag of subbundles of type $(\vec{e},I)$. (We call this condition (5').) Moreover, $\cO(\vec{e})$ admits a unique flag of subbundles of type $(\vec{e},I)$.
\end{proposition}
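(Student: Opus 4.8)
The plan is to derive the proposition from the equivalence $(4)\Leftrightarrow(5)$ of Proposition~\ref{prop:equiv_leq} by means of a refinement argument. One direction is immediate: if $\vec f\le\vec e$, then $(4)\Rightarrow(5)$ produces a flag of subbundles $E_1'\subset\cdots\subset E_m'\subset\cO(\vec f)$ with $\rk E_l'=\rk E_l$ and $\deg E_l'=\deg E_l$, and keeping only the terms indexed by $I$ leaves a flag of subbundles of type $(\vec e,I)$ --- here one uses that a composite $A\subseteq B\subseteq C$ of subbundle inclusions is again a subbundle inclusion, since $C/A$ is an extension of the locally free sheaf $C/B$ by the locally free sheaf $B/A$.

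For the converse, let $E_{i_1}'\subset\cdots\subset E_{i_k}'\subset\cO(\vec f)$ be a flag of subbundles of type $(\vec e,I)$, and set $E_{i_0}'=0$, $E_{i_{k+1}}'=\cO(\vec f)$. I will refine it to a flag of subbundles of type $\vec e$, after which $(5)\Rightarrow(4)$ finishes. Fix a block $i_{j-1}<l<i_j$. The quotient $G_j:=E_{i_j}'/E_{i_{j-1}}'$ is a vector bundle whose rank and degree agree with those of $E_{i_j}/E_{i_{j-1}}$, which is balanced by $\vec e$-admissibility; since the balanced splitting type of a given rank and degree is the maximum in the dominance order, the splitting type of $G_j$ is $\le$ that of $E_{i_j}/E_{i_{j-1}}$. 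Applying $(4)\Rightarrow(5)$ of Proposition~\ref{prop:equiv_leq} to this pair, $G_j$ admits a flag of subbundles whose $s$-th term has the rank and degree of the $s$-th Harder--Narasimhan subbundle of $E_{i_j}/E_{i_{j-1}}$, i.e.\ of $E_{i_{j-1}+s}/E_{i_{j-1}}$; the number of such terms is exactly $i_j-i_{j-1}-1$ because the Harder--Narasimhan pieces of $\cO(\vec e)$ have pairwise distinct slopes. Pulling these terms back along $E_{i_j}'\twoheadrightarrow G_j$ inserts subbundles $E_{i_{j-1}}'\subset E_{i_{j-1}+1}'\subset\cdots\subset E_{i_j}'$ with $\rk E_l'=\rk E_l$ and $\deg E_l'=\deg E_l$ (the preimage of a subbundle under a surjection of bundles is a subbundle, and degree is additive in short exact sequences). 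Running over all blocks $j=1,\dots,k+1$ assembles a flag $E_1'\subset\cdots\subset E_m'\subset\cO(\vec f)$, each $E_l'$ being a subbundle of $\cO(\vec f)$ as a composite $E_l'\subseteq E_{i_j}'\subseteq\cO(\vec f)$ of subbundle inclusions, and carrying exactly the numerical data of the Harder--Narasimhan flag of $\cO(\vec e)$. That is condition (5), so $\vec f\le\vec e$.

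For the uniqueness clause, I first record that the Harder--Narasimhan flag is the only flag of subbundles of $\cO(\vec e)$ of type $\vec e$: if $E_1'\subset\cdots\subset E_m'\subset\cO(\vec e)$ has those invariants, then $E_1'$ has slope $\mu_{\max}(\cO(\vec e))$, so its image in $\cO(\vec e)/E_1$ --- whose maximal slope is strictly smaller --- must have rank $0$, because comparing degrees in $0\to E_1'\cap E_1\to E_1'\to\mathrm{im}\to 0$ (with $E_1'\cap E_1$ saturated in $E_1'$, as $E_1$ is a subbundle) would otherwise give $\deg E_1'<\mu_{\max}\rk E_1'=\deg E_1'$; hence $E_1'=E_1$, and one induces on $\cO(\vec e)/E_1$. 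Now apply the refinement of the previous paragraph with $\vec f=\vec e$: any flag of subbundles of type $(\vec e,I)$ in $\cO(\vec e)$ refines to a flag of subbundles of type $\vec e$ in $\cO(\vec e)$, which must be the Harder--Narasimhan flag, and since the refinement leaves the original terms untouched we conclude $E_{i_j}'=E_{i_j}$ for every $j$.

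The step I expect to be most delicate is the block-wise refinement: one must keep straight the correspondence between the Harder--Narasimhan pieces of $E_{i_j}/E_{i_{j-1}}$ and the relevant sub-range of the Harder--Narasimhan filtration of $\cO(\vec e)$ (in particular that the counts agree), and verify carefully that pullback and composition of subbundle inclusions behave as claimed. The slope computation identifying the Harder--Narasimhan flag as the unique flag with prescribed numerical invariants is short but relies on the saturation of $E_1'\cap E_1$ in $E_1'$ being made precise.
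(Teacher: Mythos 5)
Your proof is correct, but it takes a genuinely different route from the paper's. For the equivalence $(4)\Leftrightarrow(5')$, the paper simply re-runs the induction behind $(4)\Leftrightarrow(5)$, observing that Lemma~\ref{lem:subsheaf} only needs the bottom block of $\vec{e}$ to be balanced, so the same stripping-off argument works block by block along the coarser flag. You instead bootstrap directly from the already-established $(4)\Leftrightarrow(5)$: coarsening a type-$\vec{e}$ flag for one direction, and refining a type-$(\vec{e},I)$ flag for the other by applying $(4)\Rightarrow(5)$ inside each quotient $E_{i_j}'/E_{i_{j-1}}'$, using only that the balanced type is maximal in dominance order and that the Harder--Narasimhan data of $\cO(\vec{e})$ restricts correctly to each block. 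For uniqueness, the paper computes $\Hom$-spaces on the split bundle (every map $\cO(\vec{b})\to\cO(\vec{a})\oplus\cO(\vec{b})$ with $a_s<b_1$ factors through the $\cO(\vec{b})$ summand, so the top subbundle is unique; iterate), whereas you prove the stronger intermediate fact that the Harder--Narasimhan flag is the \emph{only} flag of $\cO(\vec{e})$ with its numerical invariants, via a slope/semistability estimate, and then invoke your refinement. Your route is more self-contained at the level of established equivalences and isolates exactly which property of balancedness is used; the paper's is shorter and more explicit. One small point worth flagging: your block $j=k+1$ uses that $\cO(\vec{e})/E_{i_k}$ is balanced, which requires reading the definition of $\vec{e}$-admissibility as imposing the condition for $1\le j\le k+1$ (the displayed definition says $1\le j\le k$, but the examples, Theorem~\ref{thm:smooth1}, and the paper's own proof all confirm the last quotient is meant to be included, so this is a typo in the source rather than a gap in your argument).
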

\begin{proof}
The same argument for $(4) \Leftrightarrow (5)$ in the proof of Proposition \ref{prop:equiv_leq} applies here verbatim to show that $(4) \Leftrightarrow (5')$, since Lemma \ref{lem:subsheaf} only requires that the first part of $\vec{e}$ is balanced. 

Let $\vec{e} = (\vec{a}, \vec{b})$, where $\rk \vec{a} = s$ and $a_s < b_1$. Then up to automorphisms of $\cO(\vec{b})$, we claim that there is a unique injection $\cO(\vec{b}) \hookrightarrow \cO(\vec{e})$. This is because after fixing a splitting $\cO(\vec{e}) = \cO(\vec{a}) \oplus \cO(\vec{b})$, it's clear that every nonzero homomorphism $\cO(\vec{b}) \to \cO(\vec{e})$ factors through $\cO(\vec{b})$. So the space of injections from $\cO(\vec{b})$ to $\cO(\vec{e})$ is an $\Aut(\cO(\vec{b}))$-torsor. Now iteratively applying this claim shows that a flag of subbundles of type $(\vec{e}, I)$ of $\cO(\vec{e})$ is unique.
\end{proof}
Assuming irreducibility of $\SplittNyeI$, we have $(2) \Leftrightarrow (5) \Leftrightarrow (5')$ from Proposition \ref{prop:equiv_leq}. This implies that the natural map $\SplittNyeI \to \vbst$ is surjective onto $\Splitt$. The uniquess statement from Proposition \ref{prop:one_more_cond_equiv_leq} shows that this map is an isomorphism over $\SplittOpen$. Thus to prove Theorem \ref{thm:smoooth}, it remains to show irreducibility and smoothness. 

\subsection{Irreducibility}
We prove irreducibility for any hyper-Quot construction on $(\vbst, \cE_{\univ})$. The main idea is if $Z \subseteq X$ is a union of irreducible strata, such that only one stratum has codimension equal to the expected codimension of $Z$ in $X$, then $Z$ must be irreducible of the expected codimension and that particular stratum is dense in $Z$.

\begin{definition}
Let $B$ be an irreducible scheme with a map $\phi_{\cE}: B \to \vbst$, corresponding to $\cE$ on $B \times \bP^1$. We say that the pair $(B, \cE)$ is good, or the map $\phi_{\cE}$ is good, if each splitting locus $\Sigma_{\vec{e}}(B)$ is irreducible of codimension $u(\vec{e})$.
\end{definition}

\begin{remark}
Note that $\phi_\cE$ may not be good even if it is smooth. Smoothness guarantees that the codimension of $\Sigma_{\vec{e}}$ is as expected, but $\Sigma_{\vec{e}}$ may become reducible after base change along a smooth map. However, note that the smooth covers $U_{\vec{f}} \to \vbst$ from Section \ref{subsec:sm_cover} are good. Indeed, the description of $U_{\vec{f}}$ in Proposition \ref{prop:moduli_construction} implies that the preimage of $\mathbf{\Sigma}_{\vec{e}}$ is irreducible. Note that since the codimension of $\splitt$ is at most $u(\vec{e})$, when $\phi_\cE$ is good, the closure of $\Sigma_{\vec{e}}$ is $\splitt$.
\end{remark}

Let $B$ be a scheme of finite type with a good map $\phi_{\cE}: B \to \vbst$. Let $0 \leq s \leq r$ and $d' \in \Z$. 
For this section, we are mostly thinking about $\Quot$ as parametrizing families of subsheaves with flat quotients. So it will be convenient to write $S_{s,d'}(B, \cE)$ to denote $\Quot^{r-s,d-d'}(B,\cE)$. 
When the base is clear, we abbreviate them as $S_{s,d'}(\cE)$ and $\Quot^{r-s,d-d'}(\cE)$.
Since $B$ is finite type, there exists $c \in \Z$, $m \in \Z$ such that $\cE^{\vee}(c)$ and $\cE(m)$ are relatively globally generated; equivalently, $R^1 \pi_* \cE^{\vee}(c-1) = 0$ and $R^1 \pi_* \cE(m-1) = 0$. For ease of notation, let $\cF = \pi_* \cE^{\vee}(c)$ and let $\cG = \pi_* \cE^{\vee}(c-1)$. Let $\cF' = \pi_* \cE(m)$, $\cG' = \pi_* \cE(m-1)$. Note that these are all vector bundles by cohomology and base change. The short exact sequence of Str{\o}mme (Proposition 1.1 \cite{Stromme1987}), applied to $\cE^{\vee}(c)$ and $\cE(m)$, after possibly dualizing and twisting, gives us
\begin{equation} \label{eq:SES1}
  0 \to \cE \to \pi^* \cF(c) \to \pi^*\cG(c+1) \to 0,
\end{equation}
and
\begin{equation} \label{eq:SES2}
  0 \to \pi^* \cG'(-m-1) \to \pi^* \cF'(-m) \to \cE \to 0.
\end{equation}
Using \eqref{eq:SES1}, by postcomposing, we can think of subsheaves of $\cE$ with flat quotients as subsheaves of $\pi^*\cF(c)$ with flat quotients. Similarly, using \eqref{eq:SES2}, we can think of quotients of $\cE$ as quotients of $\pi^* \cF'(-m)$ by precomposing. This gives rise to two different embeddings, as in Figure \ref{fig:two_embs}. For the discussion of irreducibility, we will only need the embedding $\iota$. We label the relevant maps in Figure \ref{fig:pullback}.

\begin{figure}
\centering
\begin{minipage}{.5\textwidth}
  \centering
  \[
  \begin{tikzcd}
{S_{s,d'}(\cE)} \arrow[rd] \arrow[r, "\iota", hook] & {S_{s,d'}(\pi^* \cF(c))} \arrow[d] \\
& B \end{tikzcd}
\]

\end{minipage}%
\begin{minipage}{.5\textwidth}
  \centering
  \[
  \begin{tikzcd}
{\Quot^{r-s,d-d'}( \cE)} \arrow[rd] \arrow[r, "j", hook] & {\Quot^{r-s,d-d'}(\pi^* \cF'(-m))} \arrow[d] \\
& B 
\end{tikzcd}
\]
\end{minipage}
\caption{Two embeddings of a relative Quot scheme into a fiber bundle of Quot schemes}
\label{fig:two_embs}
\end{figure}

\begin{figure}
\[
\begin{tikzcd}
S_{s,d'}(\cE) \times \bP^1 \arrow[r, hook] \arrow[d] & 
S_{s,d'}(\pi^* \cF(c)) \times \bP^1 \arrow[r, "\nye{\gamma}"] \arrow[d, "f"] 
& B \times \bP^1 \arrow[d, "\pi"] \\
S_{s,d'}(\cE) \arrow[r, hook, "\iota"] \arrow[rr, bend right, "\sigma"]                        & 
S_{s,d'}(\pi^* \cF(c)) \arrow[r, "\gamma"]         & B                              
\end{tikzcd}
\]

\[
\begin{tikzcd}
\Quot^{r-s,d-d'}(\cE) \times \bP^1 \arrow[r, hook] \arrow[d] & 
\Quot^{r-s,d-d'}(\pi^* \cF'(-m)) \times \bP^1 \arrow[r, "\nye{\gamma}"] \arrow[d, "f"] 
& B \times \bP^1 \arrow[d, "\pi"] \\
\Quot^{r-s,d-d'}(\cE) \arrow[r, hook, "j"]                        & 
\Quot^{r-s,d-d'}(\pi^* \cF'(-m)) \arrow[r, "\gamma"]         & B                              
\end{tikzcd}
\]
\caption{The relative Quot scheme and relavant maps. We abuse notation and use the same symbols for the two scenarios.}
\label{fig:pullback}
\end{figure}

A subsheaf of $\pi^* \cF(c)$ is a subsheaf of $\cE$ if and only if its image in $\pi^* \cG(c+1)$ is zero. So the image of the embedding $\iota$ in Figure \ref{fig:two_embs} is closed, and is the zero locus of the section of the vector bundle $f_* \cHom(\cS, \nye{\gamma}^* \pi^* \cG(c+1))$ corresponding to the composite $\cS \to \nye{\gamma}^* \pi^* \cF(c) \to \nye{\gamma}^* \pi^* \cG(c+1)$, where $\cS$ is the universal subsheaf on $S_{s,d'}(\pi^*\cF(c)) \times \bP^1$.
Dually, the image of the closed embedding $j$ in Figure \ref{fig:two_embs} is the zero locus of the section of the vector bundle $f_* \cHom(\nye{\gamma}^* \pi^* \cG'(-m-1), \cQ)$, corresponding to the composite $\nye{\gamma}^* \pi^* \cG'(-m-1) \to \nye{\gamma}^* \pi^* \cF'(-m) \to \cQ$, where $\cQ$ is the universal quotient on $\Quot^{r-s,d-d'}(\pi^*\cF'(-m)) \times \bP^1$. Our calculations below will show, 
among other things, that these sections cut out $S_{s,d'}(\cE)$ and $\Quot^{r-s',d-d'}(\cE)$ as local complete intersections in their embeddings.

\begin{lemma} \label{lem:eb} Given $\vec{a}, r, d$, there is a unique splitting type $\vec{e}_b = \vec{e}_b(r,d,\vec{a})$ admitting a $\cO(\vec{a})$-subsheaf which can be written as $(\text{Bal}, \vec{a}_+)$, where $\text{Bal}$ is balanced, and $\vec{a} = (\vec{a}_-, \vec{a}_+)$. This splitting type $\vec{e}_b$ is maximal among all $\vec{e}$ of rank $r$ and degree $d$ such that $\cO(\vec{e})$ has a $\cO(\vec{a})$-subsheaf. Moreover, if $\pvec{a}' \leq \vec{a}$, then $\vec{e}_b(r,d,\pvec{a}') \leq \vec{e}_b(r,d,\vec{a})$. 
\end{lemma}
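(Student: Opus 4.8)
The plan is to reduce the lemma to a combinatorial statement about the dominance order and settle that, using Lemma~\ref{lem:subsheaf} for the maximality part. Throughout write $s = \len \vec{a} \le r$, with $\vec{a} = (a_1,\dots,a_s)$ sorted non-decreasingly. The first step is a subsheaf criterion: for sorted splitting types $\vec{b} = (b_1,\dots,b_n)$ and $\vec{e} = (e_1,\dots,e_r)$ with $n \le r$, the bundle $\cO(\vec{b})$ is isomorphic to a subsheaf of $\cO(\vec{e})$ if and only if $b_i \le e_{r-n+i}$ for all $i$. The "if" direction is the diagonal inclusion $\bigoplus_i \cO(b_i) \hookrightarrow \bigoplus_i \cO(e_{r-n+i}) \hookrightarrow \cO(\vec{e})$; for "only if", if $b_{n-k+1} > e_{r-k+1}$ for some $k$, then $\Hom(\cO(b_j),\cO(e_\ell)) = 0$ whenever $j \ge n-k+1$ and $\ell \le r-k+1$, so any homomorphism $\cO(\vec{b}) \to \cO(\vec{e})$ carries the top $k$ summands of the source into the top $k-1$ summands of the target and cannot be injective. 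Hence the set $\mathcal{S}$ of splitting types of rank $r$ and degree $d$ admitting an $\cO(\vec{a})$-subsheaf is exactly $\{\vec{e} : a_i \le e_{r-s+i}\text{ for all }i\}$, and the lemma asserts that $\mathcal{S}$ has a dominance-maximum, of the stated shape, monotone in $\vec{a}$.

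Next I would construct $\vec{e}_b$ by a water-filling recipe: let $t \in \{0,\dots,s\}$ be minimal such that the balanced splitting type $B_t$ of rank $r-s+t$ and degree $D_t := d - (a_{t+1}+\dots+a_s)$ has top entry strictly less than $a_{t+1}$ (with the convention $a_{s+1} = +\infty$, so $t=s$ always qualifies and the minimum exists). Put $\vec{a}_- = (a_1,\dots,a_t)$, $\vec{a}_+ = (a_{t+1},\dots,a_s)$, and $\vec{e}_b := (B_t, a_{t+1},\dots,a_s)$; by the choice of $t$ this is a sorted splitting type of rank $r$ and degree $d$ of the form $(\text{Bal},\vec{a}_+)$ with $\vec{a} = (\vec{a}_-,\vec{a}_+)$. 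Membership $\vec{e}_b \in \mathcal{S}$ reduces, via the criterion and $\cO(\vec{a}) = \cO(\vec{a}_-)\oplus\cO(\vec{a}_+)$, to the inequalities $a_i \le (B_t)_{r-s+i}$ for $i \le t$ (equivalently $\cO(\vec{a}_-) \hookrightarrow \cO(B_t)$), which I would deduce from the minimality of $t$: the failure of each smaller index $j$ to qualify gives $(B_j)_{\max} \ge a_{j+1}$, and these inequalities together with $a_t \ge \cdots \ge a_{j+1}$ force $D_t$ to be large enough that $B_t$ carries enough copies of its top value to cover $a_1,\dots,a_t$ in the positions $r-s+1,\dots,r-s+t$. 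For maximality, if $t = s$ then $\vec{e}_b = B_s$ is balanced and dominates every splitting type of rank $r$, degree $d$; if $t < s$ then $(B_t)_{\max} < (\vec{a}_+)_1$ by construction, so Lemma~\ref{lem:subsheaf} applies to $\vec{e}_b = (\text{Bal},\vec{a}_+)$ and says $\vec{f} \le \vec{e}_b$ iff $\cO(\vec{f})$ contains $\cO(\pvec{c}')$ for some $\pvec{c}' \le \vec{a}_+$; since $\cO(\vec{a}_+)$ is a summand of $\cO(\vec{a})$, every $\vec{e} \in \mathcal{S}$ satisfies this with $\pvec{c}' = \vec{a}_+$, so $\vec{e} \le \vec{e}_b$ and $\vec{e}_b = \max \mathcal{S}$. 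Uniqueness of a splitting type of this shape inside $\mathcal{S}$ then follows because any such, after absorbing leading entries of its $\vec{a}_+$-block equal to the top of its balanced block (which changes neither it nor its shape), is likewise a maximum of $\mathcal{S}$ by the same argument, and a poset has at most one maximum.

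For monotonicity I would use that the dominance order is generated by moves transferring one unit of degree from a higher-indexed entry to a lower-indexed one, reduce to the case that $\pvec{a}'$ and $\vec{a}$ differ by exactly one such move, and run the construction above for the two profiles in parallel: the split index $t$ reacts in a bounded way, and in each of the finitely many cases (the displaced unit landing in the $\vec{a}_-$- or the $\vec{a}_+$-block, crossed with the possible changes in $t$) one checks $\vec{e}_b(r,d,\pvec{a}') \le \vec{e}_b(r,d,\vec{a})$ directly from the explicit formulas, or equivalently by comparing $h^1\cO(-)(m)$ over all $m$ via condition~(3) of Proposition~\ref{prop:equiv_leq}. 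I expect this to be the main obstacle. The conceptual input — the criterion together with Lemma~\ref{lem:subsheaf} — delivers maximality almost immediately, but monotonicity cannot be obtained formally from it: already in low rank one has $\vec{e}_b(r,d,\vec{a}) \notin \mathcal{S}(\pvec{a}')$ and $\vec{e}_b(r,d,\pvec{a}') \notin \mathcal{S}(\vec{a})$ in general, so the floor/ceiling estimate in the construction step and the case analysis in the monotonicity step both have to be carried out by hand.
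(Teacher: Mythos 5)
Your construction, uniqueness, and maximality arguments are essentially sound, and the maximality step is actually cleaner than the paper's: the paper proves $\vec{e} \leq \vec{e}_b$ for every $\vec{e}$ admitting an $\cO(\vec{a})$-subsheaf by directly comparing partial sums from the back (over the $\vec{a}_+$-block) and from the front (using balancedness of the first block), whereas you get it in one line from Lemma~\ref{lem:subsheaf} applied to $\vec{e}_b = (\text{Bal}, \vec{a}_+)$ together with the observation that $\cO(\vec{a}_+)$ is a summand of $\cO(\vec{a})$ and hence embeds into every member of $\mathcal{S}$; deducing uniqueness from ``every element of $\mathcal{S}$ of this shape is a maximum'' is likewise a nice shortcut past the paper's direct comparison of two candidate lengths $k_1 > k_2$. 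The membership verification $\vec{e}_b \in \mathcal{S}$ that you only sketch (``minimality of $t$ forces $D_t$ to be large enough'') does go through: from $\ceil{D_{i-1}/(r-s+i-1)} \geq a_i$ for each $i \leq t$ one gets $D_t \geq (a_i-1)(r-s+t) + (t-i+2)$, which places position $r-s+i$ of $B_t$ at height $\geq a_i$; you should write this out, since it is exactly the induction the paper performs for its existence claim.

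The genuine gap is the monotonicity assertion $\pvec{a}' \leq \vec{a} \Rightarrow \vec{e}_b(r,d,\pvec{a}') \leq \vec{e}_b(r,d,\vec{a})$, which you explicitly defer to an unexecuted case analysis. This is not a throwaway ``moreover'': it is the part of the lemma that Proposition~\ref{prop:codim_count} actually invokes (to conclude $\pvec{e}' \leq \vec{e}_b(r,d,\pvec{a}') \leq \vec{e}_b(r,d,\vec{a})$), and, as you correctly observe, it does not follow formally from the maximum characterization because neither $\mathcal{S}(\pvec{a}') \subseteq \mathcal{S}(\vec{a})$ nor the reverse holds. Your proposed reduction to single unit-transfer moves is plausible but untested, and the interaction with the split index is not as tame as ``bounded'': the paper instead compares $\vec{e}_b(r,d,\pvec{a}')$ and $\vec{e}_b(r,d,\vec{a})$ directly, splitting on whether $k \geq k'$ or $k < k'$ (the lengths of the two $\vec{a}_+$-blocks); the first case is a straightforward comparison of back partial sums over the $\vec{a}_+$-block followed by a comparison of two balanced prefixes of unequal degree, while the second requires a contradiction argument tracking where the back-partial-sum inequality first flips and using degree conservation. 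Until some version of this is carried out, the third claim of the lemma is unproved.
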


\begin{remark} For instance, if $\vec{a} = (0,0)$, and $(r, d) = (4,-1)$, then $\vec{e}_b = (-1,0,0,0)$. Notice that in this case, we can take $\vec{a}_+$ to be length $0,1$ or $2$. The claim of the lemma is that $\vec{e}_b$ is unique, but the decomposition into two parts may not be.
\end{remark}

\begin{proof}[Proof of Lemma \ref{lem:eb}]
Let $A_k = \sum_{j = 0}^{k-1}a_{s-j}$. To form such an $\vec{e}_b$ amounts to choosing the length $k$ of $\vec{a}_+$, and letting the first part be the balanced splitting type of the correct degree. The choice of a particular length would not work if the vector obtained is not a weakly increasing sequence, i.e. if $\ceil{\frac{d-A_k}{r-k}} > a_{s-k+1}$. It also would not work if the vector obtained, as a splitting type, corresponds to a vector bundle that does not admit a $\cO(\vec{a})$-subsheaf. Recall that by Lemma \ref{lem:subsheaf}, $\cO(\vec{e}_b)$ admits an $\cO(\vec{a})$-subsheaf if and only if $a_{s-j} \leq e_{b,r-j}$ for all $0 \leq j < s$.

There are two ways the lemma may be false. One is that no $\vec{a}_+$ works. We claim that in this case $\frac{d}{r} \geq a_{s-k+1}$ for all $1 \leq k \leq s$, so that it suffices to choose $k = 0$. 
Indeed, $k = s$ doesn't work if and only if $\ceil{\frac{d-A_s}{r-s}} > a_{1}$,
which implies that $
\frac{d}{r} \geq a_{1}.$
Now since \[
\ceil{\frac{d-A_{s-1}}{r-s+1}} = \ceil{\frac{d-A_{s}+a_1}{r-s+1}} > a_{1},
\] $k = s-1$ doesn't work if and only if $
\ceil{\frac{d-A_{s-1}}{r-s+1}} > a_{2}.
$
The desired claim follows from induction like so.

Another way that this claim could fail is that there are two valid choices of $k$. Suppose that $k_1 > k_2$ both work, giving splitting types $\vec{e}_b$ and $\pvec{e}_b'$. Since $k_1 > k_2$, $e_{b, r-k_2}' \leq e_{b, r-k_2}$. But in order for $\pvec{e}_b'$ to admit an $\cO(\vec{a})$-subsheaf, we must have $e_{b, r-k_2}' \geq a_{s-k_2} = e_{b, r-k_2}$. Therefore $e_{b, r-k_2}' = e_{b, r-k_2}$, which means that $\vec{e}_b$ must have been balanced up to the index $r-k_2$ to begin with. So in fact $\vec{e}_b = \pvec{e}_b'$.

For the next claim, write $\vec{e}_b = (\text{Bal}, \vec{a}_+)$, and let $k = \len(\vec{a}_+)$. (As explained in the remark above, there may be multiple ways of breaking $\vec{e}_b = (e_{b,1}, \dots, e_{b,r})$ up into two parts, but just choose one way.)
Let $\vec{e}$ be some splitting type of rank $r$ and degree $d$ such that $\cO(\vec{e})$ has a $\cO(\vec{a})$-subsheaf. Then since $a_{s-j} \leq e_{r-j}$ for $0 \leq j < k$, we have for all $1 \leq i \leq k$,
\[
\sum_{j = 0}^{i-1}e_{b, r-j} = 
\sum_{j = 0}^{i-1}a_{s-j} 
\leq 
\sum_{j = 0}^{i-1}e_{r-j}.
\]
Since the first part of $\vec{e}_b$ is balanced with $\deg \text{Bal} \geq \sum_{j = 1}^{r-k}e_j'$, we must have for $1 \leq i \leq r-k$,
\[
\sum_{j = 1}^{i}e_{b, j} \geq  
\sum_{j = 1}^{i}e_{j}. 
\]
Together these inequalities show that $\vec{e}_b \geq \vec{e}$. 

The second comparison follows along similar lines. Write $\pvec{e}_b' = \vec{e}_b(r,d,\pvec{a}') = (\text{Bal}', \pvec{a}_+')$ and let $k' = \len(\pvec{a}_+')$. We divide into two cases: $k \geq k'$ and $k < k'$. First suppose $k \geq k'$. For $1 \leq i \leq k$, we have 
\begin{align} \label{eq:compare}
\sum_{j = 0}^{i-1}e_{b,r-j} = 
\sum_{j = 0}^{i-1}a_{s-j} 
\leq 
\sum_{j = 0}^{i-1}a_{s-j}' 
\leq
\sum_{j = 0}^{i-1}e'_{b,r-j}.
\end{align}
It remains to compare $\text{Bal}$ and the first $r-k$ entries of $\text{Bal}'$. Both are balanced, with $\deg \text{Bal} \geq \deg \text{Bal}'|_{1,\dots,r-k}$. So for $1 \leq i \leq r-k$,
\[
\sum_{j = 1}^{i}e_{b, j} \geq  
\sum_{j = 1}^{i}e'_{b, j}. 
\]
Therefore 
$\vec{e}_b(r,d,\pvec{a}') \leq \vec{e}_b(r,d,\vec{a})$.

Now suppose $k < k'$. For $1 \leq i \leq k$, Equation \ref{eq:compare} still holds, with the last inequality being in fact an equality in this case. Suppose that for some $k < i \leq k'$, 
\begin{align} \label{eq:compare2}
\sum_{j = 0}^{i-1}e_{b,r-j} = |\vec{a}_+| + \sum_{j = k}^{r-1}e_{b,r-j} >
\sum_{j = 0}^{i-1}e'_{b,r-j} = \sum_{j = 0}^{i-1}a'_{r-j}.
\end{align}
Since the inequality comparing the partial sums from the back changed as we went from summing the last $k$ terms to summing the last $i$ terms, it must be true that 
\[
e_{b, r-k} = \ceil{\frac{d-|\vec{a}_+|}{r-k}}
> a'_{r-i+1} \geq e'_{b,r-k'} = \ceil{\frac{d-|\pvec{a}_+'|}{r-k'}}.
\]
In particular, $e_{b, j} \geq e_{b, j}'$ for all $1 \leq j \leq r-i$. However, that's impossible because together with Equation \ref{eq:compare2}, this implies that $d = |\vec{e}_b| > |\pvec{e}_b'| = d$. This shows that for $1 \leq i \leq k'$, 
\[
\sum_{j = 0}^{i-1}e_{b,r-j}
\leq
\sum_{j = 0}^{i-1}e'_{b,r-j}.
\]
Now since the first $r-k'$ entries of $\vec{e}_b$ and $\pvec{e}_b'$ are balanced with that of $\vec{e}_b$ having degree no less than that of $\pvec{e}_b'$, we can conclude that $\vec{e}_b \geq \pvec{e}_b'$.
\end{proof}

\begin{lemma} \label{lem:ue-ext-inequality} Let $(\vec{a}, \vec{e})$ be such that $\cO(\vec{e})$ admits a $\cO(\vec{a})$-subsheaf. Then
\[
u(\vec{e}) - \operatorname{ext}^1(\cO(\vec{a}), \cO(\vec{e})) \geq 0,
\]
with equality if and only if $\vec{e} = \vec{e}_b(r,d,\vec{a})$. 
\end{lemma}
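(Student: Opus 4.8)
The plan is to reduce everything to the combinatorics of line bundles on $\bP^1$. For integers $x,y$ set $\phi(x,y) := h^1(\bP^1,\cO(y-x)) = \max(0,\,x-y-1)$, so that $\operatorname{ext}^1(\cO(b),\cO(c)) = \phi(b,c)$; note $\phi$ is non-decreasing in $x$, non-increasing in $y$, and $\phi\ge 0$. Letting $s$ be the rank of $\vec a$ and ordering $a_1\le\cdots\le a_s$, $e_1\le\cdots\le e_r$, we have
\[
u(\vec e)=\operatorname{ext}^1(\cO(\vec e),\cO(\vec e))=\sum_{i=1}^{r}\sum_{j=1}^{r}\phi(e_i,e_j),\qquad \operatorname{ext}^1(\cO(\vec a),\cO(\vec e))=\sum_{i=1}^{s}\sum_{j=1}^{r}\phi(a_i,e_j),
\]
and, as recalled in the proof of Lemma \ref{lem:eb}, the hypothesis that $\cO(\vec e)$ has a $\cO(\vec a)$-subsheaf is equivalent to $a_i\le e_{r-s+i}$ for all $1\le i\le s$. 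Monotonicity of $\phi$ then gives $\phi(a_i,e_j)\le\phi(e_{r-s+i},e_j)$, hence
\[
\operatorname{ext}^1(\cO(\vec a),\cO(\vec e))\;\le\;\sum_{i=1}^{s}\sum_{j=1}^{r}\phi(e_{r-s+i},e_j)\;=\;\sum_{k=r-s+1}^{r}\sum_{j=1}^{r}\phi(e_k,e_j)\;\le\;\sum_{k=1}^{r}\sum_{j=1}^{r}\phi(e_k,e_j)\;=\;u(\vec e),
\]
the last step since every term is $\ge 0$; this proves the inequality. (Alternatively, $\Hom(-,\cO(\vec e))$ applied to $0\to\cO(\vec a)\to\cO(\vec e)\to\cQ\to 0$ yields a surjection $\Ext^1(\cO(\vec e),\cO(\vec e))\twoheadrightarrow\Ext^1(\cO(\vec a),\cO(\vec e))$ because a coherent sheaf on a curve has no $\Ext^2$; but the displayed chain is what I need for the equality case.)

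Next I would read off the equality conditions. Equality forces both inequalities in the second display to be equalities. The second one gives $\phi(e_k,e_j)=0$ for all $k\le r-s$ and all $j$; taking $j=1$ this says $e_{r-s}\le e_1+1$, i.e.\ $(e_1,\dots,e_{r-s})$ is balanced. The first one forces, for each $i$: if $e_{r-s+i}\ge e_1+2$, then comparing the $j=1$ terms, $\phi(a_i,e_1)=\phi(e_{r-s+i},e_1)=e_{r-s+i}-e_1-1>0$ forces $a_i\ge e_{r-s+i}$, hence $a_i=e_{r-s+i}$; while if $e_{r-s+i}\le e_1+1$ all the relevant terms vanish automatically. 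So equality holds if and only if (B) $(e_1,\dots,e_{r-s})$ is balanced and (A$'$) $a_i=e_{r-s+i}$ for every $i$ with $e_{r-s+i}\ge e_1+2$.

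It remains to match (A$'$) and (B) with $\vec e=\vec e_b(r,d,\vec a)$. If $\vec e=\vec e_b=(\operatorname{Bal},\vec a_+)$ with $\vec a=(\vec a_-,\vec a_+)$ and $\operatorname{Bal}$ balanced as in Lemma \ref{lem:eb}, then the first $r-\len\vec a_+\ge r-s$ entries of $\vec e_b$ form the balanced string $\operatorname{Bal}$, giving (B); and an entry of $\vec e_b$ that is $\ge e_{b,1}+2$ cannot come from $\operatorname{Bal}$ (whose entries are $\le e_{b,1}+1$), so it lies in the $\vec a_+$ block, where the $i$-th tail entry is exactly $a_i$, giving (A$'$). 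Conversely, assume (A$'$) and (B), and let $p$ be the number of indices $j$ with $e_j\ge e_1+2$. By (B) such $j$ satisfy $j>r-s$, so $p\le s$ and they are the last $p$ entries $e_{r-p+1},\dots,e_r$, which by (A$'$) equal $a_{s-p+1},\dots,a_s$; the remaining entries $e_1,\dots,e_{r-p}$ lie in $\{e_1,e_1+1\}$, hence are balanced. Thus $\vec e$ has the form $(\operatorname{Bal},\vec a_+)$ with $\vec a_+=(a_{s-p+1},\dots,a_s)$ and, by hypothesis, admits a $\cO(\vec a)$-subsheaf, so the uniqueness clause of Lemma \ref{lem:eb} gives $\vec e=\vec e_b(r,d,\vec a)$.

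The only real obstacle is bookkeeping: pinning down the index shift $a_i\le e_{r-s+i}$, handling the two sub-cases in the equality analysis, and keeping the two directions of the matching step consistent with the (not-quite-canonical) decomposition $\vec e_b=(\operatorname{Bal},\vec a_+)$. There is no substantive geometric input beyond Lemma \ref{lem:eb} and the standard formula for $h^1$ on $\bP^1$.
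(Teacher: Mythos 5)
Your proof is correct and follows essentially the same route as the paper's: both decompose $u(\vec e)-\operatorname{ext}^1(\cO(\vec a),\cO(\vec e))$ into the nonnegative contribution of the first $r-s$ entries of $\vec e$ plus the termwise differences coming from replacing $a_i$ by $e_{r-s+i}$, using the necessary condition $a_i\le e_{r-s+i}$ from the subsheaf hypothesis, and both characterize equality as ``$\vec e$ has the form $(\operatorname{Bal},\vec a_+)$'' before invoking the uniqueness clause of Lemma \ref{lem:eb}. Your write-up is simply more explicit in the final matching step, which the paper leaves implicit.
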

\begin{proof} 
Write $\vec{e} = (e_1, \dots, e_r)$ and $\vec{a} = (a_1, \dots, a_s)$. We can write
\begin{align*}
u(\vec{e}) - \operatorname{ext}^1(\cO(\vec{a}), \cO(\vec{e}))
&= 
\operatorname{ext}^1(\cO(e_1, \dots, e_{r-s}), \cO(\vec{e}))
+ 
\sum_{i = 0}^{s-1} (
h^1(\cO(\vec{e}) \otimes \cO(e_{r-i})^{\vee})
- h^1(\cO(\vec{e}) \otimes \cO(a_{s-i})^{\vee})) \\
&= u(e_1, \dots, e_{r-s}) + \sum_{i = 0}^{s-1} (
h^1(\cO(\vec{e}) \otimes \cO(e_{r-i})^{\vee})
- h^1(\cO(\vec{e}) \otimes \cO(a_{s-i})^{\vee})).
\end{align*}
Since $\cO(\vec{e})$ has a $\cO(\vec{a})$-subsheaf, we must have that $a_{s-i} \leq e_{r-i}$ for $0 \leq i < s$, so each difference in the sum above is nonnegative. It is equal to zero if and only if $u(e_1, \dots, e_{r-s}) = 0$, and 
\[
h^1(\cO(\vec{e}) \otimes \cO(e_{r-i})^{\vee})
= h^1(\cO(\vec{e}) \otimes \cO(a_{s-i})^{\vee})
\]
for all $0 \leq i < s$. The latter condition says that whenever there exists $1 \leq j \leq r$ such that $e_{r-i} - e_j \geq 2$, then $a_{s-i} = e_{r-i}$. In other words, $\vec{e}$ achieves equality if and only if $\vec{e}$ can be written as $(\text{Bal}, \vec{a}_+)$, where $\text{Bal}$ is balanced, and $\vec{a} = (\vec{a}_-, \vec{a}_+)$. 
\end{proof}

\begin{definition}
Let $\cS$ be the universal subsheaf on $S_{s,d'}(B,\cE) \times \bP^1$. Let $\sigma: S_{s,d'}(B,\cE) \to B$. Define $X_{\vec{a},\vec{e}} = \Sigma_{\vec{a}}(\cS) \cap \sigma^{-1}\Sigma_{\vec{e}}(\cE)$. Define $\Subsheaves(\vec{a},\vec{e}) = U(\vec{a},\vec{e}) / \Aut(\cO(\vec{a}))$, where $U(\vec{a}, \vec{e}) \subset \Hom(\cO(\vec{a}), \cO(\vec{e}))$ is the open locus of injective maps, and $\Aut(\cO(\vec{a}))$ acts on $U_{\vec{a},\vec{e}}$ by precomposition.
\end{definition}

\begin{lemma} \label{lemma:subsheaves}
The map $X_{\vec{a},\vec{e}} \to \Sigma_{\vec{e}}(\cE)$ is flat with fibers isomorphic to $\Subsheaves(\vec{a},\vec{e})$.
\end{lemma}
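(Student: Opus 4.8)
The plan is to trivialize the family $\cE$ after passing to an fppf cover of $\Sigma_{\vec{e}}(\cE)$, observe that over that cover $X_{\vec{a},\vec{e}}$ becomes a product, and then descend. Write $B'=\Sigma_{\vec{e}}(\cE)$. By definition $B'=B\times_{\vbst}\SplittOpen$, so $B'$ carries a morphism to $\SplittOpen$. The stack $\SplittOpen$ is a gerbe over $\Spec k$ banded by $\Aut(\cO(\vec{e}))$, and it is trivial since the bundle $\cO(\vec{e})$ on $\bP^1$ defines a section $\Spec k\to\SplittOpen$; hence $\SplittOpen\cong[\Spec k/\Aut(\cO(\vec{e}))]$, with the atlas $\Spec k\to\SplittOpen$ being the universal $\Aut(\cO(\vec{e}))$-torsor. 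Pulling this atlas back along $B'\to\SplittOpen$ produces an $\Aut(\cO(\vec{e}))$-torsor $q\colon P\to B'$, which is in particular smooth and surjective, hence faithfully flat; and since $P\to B'\to\vbst$ factors through the point classifying $\cO(\vec{e})$, the restriction of $\cE$ to $P\times\bP^1$ is the constant family pulled back from $\cO(\vec{e})$ on $\bP^1$.

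I would then base change along $q$. Writing $\cE'=\cE|_{B'\times\bP^1}$, we have $\sigma^{-1}(B')=S_{s,d'}(B',\cE')$, and since relative $\Quot$ commutes with base change while the relative $\Quot$ scheme of a constant family is the product of the base with the absolute $\Quot$ scheme,
\[
  S_{s,d'}(B',\cE')\times_{B'}P\;\cong\;S_{s,d'}(P,\cE|_{P\times\bP^1})\;\cong\;P\times_{k}\Quot^{r-s,d-d'}_{\bP^1}(\cO(\vec{e})).
\]
Under this identification the conditions defining $X_{\vec{a},\vec{e}}$ — that the universal subsheaf have fiberwise splitting type $\vec{a}$, and that one map into $\Sigma_{\vec{e}}(\cE)=B'$ (automatic once we restrict to $B'$) — become $P\times_{k}Z$, where $Z\subseteq\Quot^{r-s,d-d'}_{\bP^1}(\cO(\vec{e}))$ is the locally closed subscheme where the universal subsheaf has splitting type $\vec{a}$. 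Next one identifies $Z$ with $\Subsheaves(\vec{a},\vec{e})$: sending an injection $\varphi\colon\cO(\vec{a})\hookrightarrow\cO(\vec{e})$ to the quotient $\cO(\vec{e})\twoheadrightarrow\cO(\vec{e})/\varphi(\cO(\vec{a}))$ defines an $\Aut(\cO(\vec{a}))$-invariant morphism $U(\vec{a},\vec{e})\to Z$, and a $T$-point of $Z$ — a subsheaf of $\cO(\vec{e})$ on $T\times\bP^1$ of constant splitting type $\vec{a}$ — is, fppf-locally on $T$, the constant family $\cO(\vec{a})$ by the same fppf-local triviality argument, so it comes from a $T$-point of $U(\vec{a},\vec{e})$, uniquely up to the free action of $\Aut(\cO(\vec{a}))$; thus $Z=U(\vec{a},\vec{e})/\Aut(\cO(\vec{a}))=\Subsheaves(\vec{a},\vec{e})$. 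Hence $X_{\vec{a},\vec{e}}\times_{B'}P\cong P\times_{k}\Subsheaves(\vec{a},\vec{e})$ over $P$: flat over $P$, with every fiber $\Subsheaves(\vec{a},\vec{e})$.

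Finally I would descend along the faithfully flat morphism $q\colon P\to B'$: flatness of $X_{\vec{a},\vec{e}}\times_{B'}P\to P$ gives flatness of $X_{\vec{a},\vec{e}}\to B'$ by fppf descent of flatness, and choosing, for $b\in B'$, a lift to $P$ (residue field $k$, since $k$ is algebraically closed) identifies the fiber $(X_{\vec{a},\vec{e}})_b$ with $\Subsheaves(\vec{a},\vec{e})$. The step I expect to require the most care is the identification $Z\cong\Subsheaves(\vec{a},\vec{e})$ at the level of schemes: it rests on the fppf-local triviality of families of vector bundles on $\bP^1$ of fixed splitting type (equivalently, on $\SplittOpen$ being the classifying stack of $\Aut(\cO(\vec{e}))$), and the point is to match the scheme structure on $Z$ inherited from the Quot scheme with the quotient $U(\vec{a},\vec{e})/\Aut(\cO(\vec{a}))$, not merely to match points.
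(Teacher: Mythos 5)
Your proposal follows essentially the same route as the paper: trivialize $\cE$ along an $\Aut(\cO(\vec{e}))$-torsor over $\Sigma_{\vec{e}}(\cE)$, use compatibility of relative $\Quot$ with base change to split off a product with the absolute Quot scheme of $\cO(\vec{e})$, identify the splitting-type-$\vec{a}$ stratum there with $\Subsheaves(\vec{a},\vec{e})$, and descend. The one place you diverge is the step you yourself flag as delicate: where you invoke fppf-local triviality of the universal subsheaf to match $Z$ with $U(\vec{a},\vec{e})/\Aut(\cO(\vec{a}))$, the paper instead compares the two functors directly and observes that the only discrepancy is the flatness requirement on the quotient, which it then resolves by the local criterion for flatness --- $\operatorname{Tor}^1(\cO_{\bP^1_{\phi}},\cQ)=0$ if and only if $\phi\colon\cO(\vec{a})\to\cO(\vec{e})$ is injective. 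That Tor computation is worth adopting in any case, since your argument quietly uses it too: to have a morphism $U(\vec{a},\vec{e})\to\Quot^{r-s,d-d'}_{\bP^1}(\cO(\vec{e}))$ at all, the cokernel of the universal injection over $U(\vec{a},\vec{e})\times\bP^1$ must be flat over $U(\vec{a},\vec{e})$, and that is exactly what the local criterion delivers.
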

\begin{proof}
By taking a base change along a principal $\Aut(\cO(\vec{e}))$-bundle, which is a faithfully flat morphism, we may without loss of generality assume that $\cE \cong g^* \cO(\vec{e})$, where $g: \Sigma_{\vec{e}}(\cE) \times \bP^1 \to \bP^1$. Then we claim that 
$X_{\vec{a},\vec{e}} \cong \Sigma_{\vec{e}}(\cE) \times \Subsheaves(\vec{a},\vec{e})$. 
By functoriality of $\Quot$, 
$
S_{s,d'}(B, g^*\cO(\vec{e}))
\cong 
\Sigma_{\vec{e}}(\cE) 
\times
S_{s,d'}(\Spec k, \cO(\vec{e}))
$. 
We can then take the splitting locus $\Sigma_{\vec{a}}(\cS)$ on both sides. So it remains to show that over a point, the splitting locus $\Sigma_{\vec{a}}(\cS) \subset S_{s,d'}(\Spec k, \cO(\vec{e}))$ is isomorphic to $\Subsheaves(\vec{a},\vec{e})$. The only difference between the functors they represent is that a priori, $\Subsheaves(\vec{a},\vec{e})$ does not require flatness for the family of quotients. So it remains to show that if
\[
	0 \to \cS \to g^* \cO(\vec{e}) \to \cQ \to 0
\]
is the universal short exact sequence on $\Hom(\cO(\vec{a}),\cO(\vec{e})) \times \bP^1$, then $\cQ$ is flat over exactly $U(\vec{a},\vec{e})$. To see this, we use the local criterion for flatness (Theorem 6.8 \cite{Eisenbud1995}), which says that $\cQ$ is flat at a closed point $\phi \in \Hom(\cO(\vec{a}),\cO(\vec{e}))$ if and only if $\operatorname{Tor}^1(\cO_{\bP^1_{\phi}}, \cQ) = 0$. Tensoring the short exact sequence above with $\cO_{\bP^1_{\phi}}$, we see that since 
\[
\operatorname{Tor}^1(\cO_{\bP^1_{\phi}}, g^* \cO(\vec{e})) = 0
\] for all $\phi$, $\operatorname{Tor}^1(\cO_{\bP^1_{\phi}}, \cQ) = 0$ if and only if the map $\phi: \cO(\vec{a}) \to \cO(\vec{e})$ is injective.
\end{proof}

\begin{remark}
The same argument of taking a faithfully flat base change along a frame bundle shows that $S_{s,d'}(\pi^* \cF(c)) \to B$ is flat with fibers isomorphic to the absolute Quot scheme $S_{s,d'}(E)$, where $E = \cO_{\bP^1}(c)^{\oplus \rk \cF}$.
\end{remark}

The argument below follows the ideas of Lemma 6.1 of \cite{larson_degeneracy}.
\begin{proposition} \label{prop:codim_count} 
Suppose that $\phi_{\cE}: B \to \vbst$ is good. 
For each $\vec{a}$, let $\vec{e}_b = \vec{e}_b(r, d, \vec{a})$. 
Let $c$ be such that $\cE^{\vee}(c)$ is globally generated and take the embedding of $S_{s,d'}(\cE) \hookrightarrow S_{s,d'}(\pi^* \cF(c))$, defined above. 
Then $\codim (X_{\vec{a}, \vec{e}_b} \subseteq S_{s,d'}(\pi^* \cF(c))) = u(\vec{a}) + \rk(f_* \cHom(\cS, \nye{\gamma}^* \pi^* \cG(c+1)))$. For all other tuples $(\pvec{a}', \pvec{e}')$ where $\pvec{a}' \leq \vec{a}$ and $\cO(\pvec{e}')$ admits a $\cO(\pvec{a}')$-subsheaf, we have $\pvec{e}' \leq \vec{e}_b$ and $\codim (X_{\pvec{a}', \pvec{e}'} \subseteq S_{s,d'}(\pi^* \cF(c)) > u(\vec{a}) + \rk(f_* \cHom(\cS, \nye{\gamma}^* \pi^* \cG(c+1)))$.
\end{proposition}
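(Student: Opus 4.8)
The plan is to reduce everything to Lemmas~\ref{lem:eb} and~\ref{lem:ue-ext-inequality} by computing the two dimensions involved. First I would show that, for an arbitrary nonempty stratum $X_{\vec a,\vec e}$, it is irreducible with
\[
  \dim X_{\vec a,\vec e} = \bigl(\dim B - u(\vec e)\bigr) + \bigl(\operatorname{hom}(\cO(\vec a),\cO(\vec e)) - \operatorname{hom}(\cO(\vec a),\cO(\vec a))\bigr).
\]
Indeed, by Lemma~\ref{lemma:subsheaves} the restriction of $\sigma$ exhibits $X_{\vec a,\vec e}$ as a flat family over $\Sigma_{\vec e}(\cE)$ with all fibers isomorphic to $\Subsheaves(\vec a,\vec e) = U(\vec a,\vec e)/\Aut(\cO(\vec a))$; goodness of $\phi_{\cE}$ makes $\Sigma_{\vec e}(\cE)$ irreducible of codimension $u(\vec e)$ in $B$, the free action of $\Aut(\cO(\vec a))$ on the space $U(\vec a,\vec e)$ of injections makes $\Subsheaves(\vec a,\vec e)$ irreducible of dimension $\operatorname{hom}(\cO(\vec a),\cO(\vec e))-\operatorname{hom}(\cO(\vec a),\cO(\vec a))$, and a flat surjection with irreducible base and irreducible equidimensional fibers has irreducible total space. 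Rewriting $\operatorname{hom}=\operatorname{ext}^1+\chi$ by Riemann--Roch on $\bP^1$ and using $\operatorname{hom}(\cO(\vec a),\cO(\vec a)) = u(\vec a)+s^2$, this becomes $\dim X_{\vec a,\vec e} = \dim B + \chi_0 + \operatorname{ext}^1(\cO(\vec a),\cO(\vec e)) - u(\vec a) - u(\vec e)$ for a constant $\chi_0$ depending only on $r,d,s,d'$.

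Next I would compute the ambient dimension. By the remark after Lemma~\ref{lemma:subsheaves}, $S_{s,d'}(\pi^*\cF(c))\to B$ is flat with every fiber the absolute Quot scheme $S_{s,d'}(E)$, $E = \cO_{\bP^1}(c)^{\oplus N}$ with $N = \rk\cF = (c+1)r-d$, which by Str{\o}mme's theorem is smooth and irreducible of a dimension depending only on $r,d,s,d',c$; hence $S_{s,d'}(\pi^*\cF(c))$ is irreducible and $\dim S_{s,d'}(\pi^*\cF(c)) = \dim B + \dim S_{s,d'}(E)$. Subtracting gives
\[
  \codim\bigl(X_{\vec a,\vec e}\subseteq S_{s,d'}(\pi^*\cF(c))\bigr) = C + u(\vec a) + \bigl(u(\vec e) - \operatorname{ext}^1(\cO(\vec a),\cO(\vec e))\bigr),
\]
with $C$ depending only on $r,d,s,d',c$. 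The step I expect to require the most care is the bookkeeping identity $C = \rk\bigl(f_*\cHom(\cS,\nye{\gamma}^*\pi^*\cG(c+1))\bigr)$. For this I would use $\rk\cG = cr-d$ together with the observation that a fiberwise subsheaf of the globally generated bundle $\cO_{\bP^1}(c)^{\oplus N}$ has only summands of degree $\le c$ --- which forces $R^1 f_*\cHom(\cS,\nye{\gamma}^*\pi^*\cG(c+1)) = 0$, so the rank is well defined, and lets one evaluate it fiberwise by Riemann--Roch. Conceptually, applying $\cHom(\cS_t,-)$ to the fiberwise form $0\to\cE_t\to\cO_{\bP^1}(c)^{\oplus N}\to\cO_{\bP^1}(c+1)^{\oplus M}\to 0$ of \eqref{eq:SES1} over a point of $B$, where the two $\Ext^1$ terms vanish for the same reason, expresses this rank in the same numerical data and matches $C$.

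The proposition then follows. Lemma~\ref{lem:ue-ext-inequality} gives $u(\vec e) - \operatorname{ext}^1(\cO(\vec a),\cO(\vec e)) \ge 0$ with equality exactly when $\vec e = \vec e_b(r,d,\vec a)$, so $\codim(X_{\vec a,\vec e_b}) = C + u(\vec a) = u(\vec a) + \rk(f_*\cHom(\cS,\nye{\gamma}^*\pi^*\cG(c+1)))$. For any other pair $(\pvec{a}',\pvec{e}')$ with $\pvec{a}'\le\vec a$ and $\cO(\pvec{e}')$ admitting an $\cO(\pvec{a}')$-subsheaf, Lemma~\ref{lem:eb} (both parts) gives $\pvec{e}'\le\vec e_b(r,d,\pvec{a}')\le\vec e_b(r,d,\vec a)=\vec e_b$, and the codimension formula reads $\codim(X_{\pvec{a}',\pvec{e}'}) = C + u(\pvec{a}') + (\text{a nonnegative term})$. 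If $\pvec{a}'=\vec a$ then $\pvec{e}'\neq\vec e_b$, so the nonnegative term is strictly positive; if $\pvec{a}'<\vec a$ then $u(\pvec{a}')>u(\vec a)$, because $u$ is strictly order-reversing for the dominance order --- which one checks directly, or deduces from the facts that a splitting stratum of rank $s$ has dimension $-s^2 - u$ and that $\mathbf{\Sigma}_{\pvec{a}'}$ lies in the proper closed subset $\mathbf{\overline{\Sigma}}_{\vec a}\setminus\mathbf{\Sigma}_{\vec a}$ of the irreducible stack $\mathbf{\overline{\Sigma}}_{\vec a}$ (Proposition~\ref{prop:equiv_leq}, $(4)\Rightarrow(2)$). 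In either case $\codim(X_{\pvec{a}',\pvec{e}'}) > C + u(\vec a) = u(\vec a) + \rk(f_*\cHom(\cS,\nye{\gamma}^*\pi^*\cG(c+1)))$, as required. (Empty strata have codimension $+\infty$ and need no discussion; in the intended applications $X_{\vec a,\vec e_b}$ is nonempty.)
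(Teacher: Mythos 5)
Your proposal is correct and follows essentially the same route as the paper: both compute $\codim(X_{\vec a,\vec e})$ via Lemma \ref{lemma:subsheaves} as (codimension of $\Sigma_{\vec e}(\cE)$) plus (fiber dimension of the ambient Quot bundle) minus ($\dim\Subsheaves(\vec a,\vec e)$), identify the constant term with $\rk(f_*\cHom(\cS,\nye{\gamma}^*\pi^*\cG(c+1)))$ by additivity of Euler characteristics along \eqref{eq:SES1}, and conclude with Lemmas \ref{lem:eb} and \ref{lem:ue-ext-inequality}. Your explicit justification that $u$ is strictly order-reversing (needed for strictness when $\pvec{a}'<\vec a$) is a small point the paper leaves implicit, and is a welcome addition.
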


\begin{proof}
Let $\vec{a}$, $\vec{e}$ be such that $\cO(\vec{e})$ has an $\cO(\vec{a})$-subsheaf. In particular $a_s \leq e_r \leq c$. Let $\gamma: S_{s,d'}(\pi^*\cF(c)) \to B$. By Lemma \ref{lemma:subsheaves},
\begin{align*}
\codim(X_{\vec{a},\vec{e}} \subseteq S_{s,d'}(\pi^*\cF(c))) &=
u(\vec{e}) + \text{fiberdim}(\gamma)-\dim(\text{Subsheaves}(\cO(\vec{a}), \cO(\vec{e}))) 
\end{align*}
Now, by restricting the exact sequence
\[
0 \to \cE \to \pi^* \cF(c) \to \pi^* \cG(c+1) \to 0
\]
to some $p \in \Sigma_{\vec{e}}(\cE)$, we get that $\cO(\vec{e})$ is a kernel like so:
\begin{equation} \label{eq:SES3}
0 \to \cO(\vec{e}) \to \cO(c)^{\oplus \rk \cF} \to \cO(c+1)^{\oplus \rk \cG} \to 0.
\end{equation}
Since $\text{fiberdim} (\gamma) = \dim S_{s,d'}(\cO(c)^{\oplus \rk \cF})$, and the subsheaf generically splits as the balanced bundle $\cO(\vec{a}_b)$, we can write 
\[
\text{fiberdim} (\gamma)
= 
h^0 \cHom(\cO(\vec{a}_b), \cO(c)^{\oplus \rk \cF})
- 
h^0 \cEnd(\cO(\vec{a}_b)).
\]
Since $c \geq a_s$, 
\[
h^0 \cHom(\cO(\vec{a}_b), \cO(c)^{\oplus \rk \cF})
= 
h^0 \cHom(\cO(\vec{a}), \cO(c)^{\oplus \rk \cF}).
\]
Similarly, 
\[
\dim(\text{Subsheaves}(\cO(\vec{a}), \cO(\vec{e}))) 
= 
h^0 \cHom(\cO(\vec{a}), \cO(\vec{e}))
- 
h^0 \cEnd(\cO(\vec{a})).
\]
Note that $\cEnd(\cO(\vec{a}))$ and $\cEnd(\cO(\vec{a}_b))$ have the same rank and degree, and therefore the same Euler characteristic. Therefore, 
\[
h^0(\cEnd(\cO(\vec{a}))) - h^0(\cEnd(\cO(\vec{a}_b)))
= h^1(\cEnd(\cO(\vec{a}))) - h^1(\cEnd(\cO(\vec{a}_b)))
= u(\vec{a}).
\]
Moreover, we obtain a long exact sequence in $\Ext$ groups by applying $\Hom(\cO(\vec{a}), -)$ to Equation \ref{eq:SES3}. Again since $c \geq a_s$, $\operatorname{ext}^1(\cO(\vec{a}), \cO(c)^{\oplus \rk \cF}) = 0$. So we get that
\begin{align*}
h^0 \cHom(\cO(\vec{a}), \cO(c)^{\oplus \rk \cF})
- h^0 \cHom(\cO(\vec{a}), \cO(\vec{e})) &=
h^0\cHom(\cO(\vec{a}), \cO(c+1)^{\oplus \rk \cG}) - \operatorname{ext}^1(\cO(\vec{a}), \cO(\vec{e})) \\ 
&= \rk f_* \cHom(\cS, \nye{\gamma}^* \pi^* \cG(c+1)) - \operatorname{ext}^1(\cO(\vec{a}), \cO(\vec{e})).
\end{align*}
Combining everything, we get that
\begin{align*}
\codim(X_{\vec{a},\vec{e}} \subseteq S_{s,d'}(\pi^*\cF(c))) &=
u(\vec{a}) + \rk f_* \cHom(\cS, \nye{\gamma}^* \pi^* \cG(c+1)) + u(\vec{e}) - \operatorname{ext}^1(\cO(\vec{a}), \cO(\vec{e})).
\end{align*}

By Lemma \ref{lem:ue-ext-inequality}, for a fixed $\vec{a}$, if $(\pvec{a}', \pvec{e}')$ is such that $\pvec{a}' \leq \vec{a}$, and $\pvec{e}'$ admits a $\cO(\pvec{a}')$-subsheaf, then
\begin{align*} \codim(X_{\pvec{a}',\pvec{e}'} \subseteq S_{s,d'}(B,\cE)) &
\geq u(\pvec{a}') + \rk f_* \cHom(\cS, \nye{\gamma}^* \pi^* \cG(c+1)) \\
& \geq u(\vec{a}) + \rk f_* \cHom(\cS, \nye{\gamma}^* \pi^* \cG(c+1))
\end{align*}
with equality if and only if $\pvec{a}' = \vec{a}$ and $\pvec{e}' = \vec{e}_b(r, d, \vec{a})$. By Lemma \ref{lem:eb}, $\pvec{e}' \leq \vec{e}_b(r, d, \pvec{a}') \leq \vec{e}_b(r, d, \vec{a})$.
\end{proof}

\begin{proposition} \label{prop:irred_onestep}
Let $B$ be an irreducible scheme of finite type with a good map $\phi_{\cE}: B \to \vbst$. Then $S_{s,d'}(\cE)$ is irreducible and the induced map $\phi_{\cS}: S_{s,d'}(\cE) \to \cB_{s,d'}$ is good.
The image of $\overline{\Sigma}_{\vec{a}}(\cS)$ in $B$ is exactly $\overline{\Sigma}_{\vec{e}_b}$, where $\vec{e}_b = \vec{e}_b(r,d,\vec{a})$. 
\end{proposition}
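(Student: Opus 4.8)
The plan is to combine three ingredients already established: the closed embedding $\iota\colon S_{s,d'}(\cE)\hookrightarrow P$ with $P\coloneq S_{s,d'}(\pi^*\cF(c))$, realizing $S_{s,d'}(\cE)$ as the vanishing locus of a section of $\cW\coloneq f_*\cHom(\cS,\nye{\gamma}^*\pi^*\cG(c+1))$; the stratification $S_{s,d'}(\cE)=\bigsqcup_{(\vec{a},\vec{e})}X_{\vec{a},\vec{e}}$; and the codimension count of Proposition \ref{prop:codim_count}. Set $N=\rk\cW$. First I would note that $P$ is irreducible: by the remark after Lemma \ref{lemma:subsheaves}, the faithfully flat base change of $P\to B$ along the $\operatorname{GL}_{\rk\cF}$-frame bundle $B'\to B$ of $\cF$ is the projection $B'\times_k S_{s,d'}(\cO(c)^{\oplus\rk\cF})\to B'$; as $B$ is irreducible and the structure group is connected, $B'$ is irreducible, while $S_{s,d'}(\cO(c)^{\oplus\rk\cF})$ is, after twisting by $\cO(-c)$, a Quot scheme of a trivial bundle on $\bP^1$, hence irreducible by Str\o mme \cite{Stromme1987}; so $P\times_B B'$, and therefore $P$, is irreducible. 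Writing $M=\dim P$, Krull's height theorem applied to the vanishing locus of a section of the rank-$N$ bundle $\cW$ shows that every irreducible component of $S_{s,d'}(\cE)$ has dimension at least $M-N$.

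The heart of the argument is the ``unique minimal stratum'' principle flagged at the start of the subsection. By Lemma \ref{lemma:subsheaves} and the hypothesis that $\phi_{\cE}$ is good, each nonempty stratum $X_{\vec{a},\vec{e}}$ is flat and surjective over the irreducible locus $\Sigma_{\vec{e}}(\cE)$ with fibers $\Subsheaves(\vec{a},\vec{e})$, an irreducible variety (a quotient of an open subset of an affine space), hence $X_{\vec{a},\vec{e}}$ is irreducible. The computation inside the proof of Proposition \ref{prop:codim_count} gives $\codim\!\bigl(X_{\vec{a},\vec{e}}\subseteq P\bigr)=u(\vec{a})+N+\bigl(u(\vec{e})-\operatorname{ext}^1(\cO(\vec{a}),\cO(\vec{e}))\bigr)$, which by Lemma \ref{lem:ue-ext-inequality} is $\geq N$, with equality exactly when $u(\vec{a})=0$ (so $\vec{a}$ equals the balanced type $\vec{a}_0$) and $\vec{e}=\vec{e}_b(r,d,\vec{a}_0)$, which I abbreviate $\vec{e}_{b,0}$. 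Any irreducible component $W$ of $S_{s,d'}(\cE)$ is a finite union of the constructible sets $X_{\vec{a},\vec{e}}\cap W$, so one of them is dense in $W$; this stratum is irreducible, of dimension $\dim W\geq M-N$, hence of codimension $\leq N$ in $P$, so it must be $X_{\vec{a}_0,\vec{e}_{b,0}}$ and $W=\overline{X_{\vec{a}_0,\vec{e}_{b,0}}}$. Since $X_{\vec{a}_0,\vec{e}_{b,0}}$ is irreducible, this forces $S_{s,d'}(\cE)$ (assumed nonempty) to be irreducible of dimension $M-N$, with dense stratum $X_{\vec{a}_0,\vec{e}_{b,0}}\subseteq\Sigma_{\vec{a}_0}(\cS)$.

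It remains to deduce that $\phi_{\cS}$ is good and to identify the image. Knowing $\dim S_{s,d'}(\cE)=M-N$, the formula above rereads as $\codim\!\bigl(X_{\vec{b},\vec{e}}\subseteq S_{s,d'}(\cE)\bigr)=u(\vec{b})+\bigl(u(\vec{e})-\operatorname{ext}^1(\cO(\vec{b}),\cO(\vec{e}))\bigr)\geq u(\vec{b})$, with equality iff $\vec{e}=\vec{e}_b(r,d,\vec{b})$. Fix $\vec{a}$ with $\Sigma_{\vec{a}}(\cS)\neq\emptyset$ and pick a nonempty $\Sigma_{\vec{e}'}(\cE)$ over which $\cO(\vec{e}')$ admits a $\cO(\vec{a})$-subsheaf; then Lemma \ref{lem:eb} gives $\vec{e}'\leq\vec{e}_b(r,d,\vec{a})$, and since $\phi_{\cE}$ is good, $\Sigma_{\vec{e}'}(\cE)\subseteq\overline{\Sigma}_{\vec{e}_b(r,d,\vec{a})}(\cE)$, so $\Sigma_{\vec{e}_b(r,d,\vec{a})}(\cE)\neq\emptyset$ and hence $X_{\vec{a},\vec{e}_b(r,d,\vec{a})}\neq\emptyset$. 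The closed locus $\overline{\Sigma}_{\vec{a}}(\cS)=\bigcup_{\vec{b}\leq\vec{a}}\bigsqcup_{\vec{e}}X_{\vec{b},\vec{e}}$ has every irreducible component of codimension at most $u(\vec{a})$ in $S_{s,d'}(\cE)$ (the standard upper bound for the codimension of a splitting locus recalled earlier), while by the ``for all other tuples'' clause of Proposition \ref{prop:codim_count} the only stratum $X_{\vec{b},\vec{e}}$ with $\vec{b}\leq\vec{a}$ of codimension at most $u(\vec{a})$ is $X_{\vec{a},\vec{e}_b(r,d,\vec{a})}$, of codimension exactly $u(\vec{a})$. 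Running the ``unique minimal stratum'' principle once more, $\overline{\Sigma}_{\vec{a}}(\cS)=\overline{X_{\vec{a},\vec{e}_b(r,d,\vec{a})}}$ is irreducible of codimension $u(\vec{a})$, hence so is its dense open subset $\Sigma_{\vec{a}}(\cS)$; thus $\phi_{\cS}$ is good. Finally, $\sigma\colon S_{s,d'}(\cE)\to B$ is proper, so it sends $\overline{\Sigma}_{\vec{a}}(\cS)=\overline{X_{\vec{a},\vec{e}_b(r,d,\vec{a})}}$ onto $\overline{\sigma(X_{\vec{a},\vec{e}_b(r,d,\vec{a})})}$; by Lemma \ref{lemma:subsheaves} the map $X_{\vec{a},\vec{e}_b(r,d,\vec{a})}\to\Sigma_{\vec{e}_b(r,d,\vec{a})}(\cE)$ is surjective, so this image is $\overline{\Sigma_{\vec{e}_b(r,d,\vec{a})}(\cE)}=\overline{\Sigma}_{\vec{e}_b}$, the last equality again because $\phi_{\cE}$ is good. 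I expect the main obstacle to be organizational rather than conceptual: applying Proposition \ref{prop:codim_count} with the correct ``base'' splitting type at each invocation and tracking non-emptiness of the relevant strata via Lemma \ref{lem:eb}; the geometric content — that the dense stratum of a component must be the unique codimension-minimal stratum, which is itself irreducible — is precisely the ``main idea'' of the subsection.
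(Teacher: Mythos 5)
Your proposal is correct and follows essentially the same route as the paper: both arguments combine the local-complete-intersection embedding into $S_{s,d'}(\pi^*\cF(c))$, the upper bound on codimension coming from the rank of $\cR$ and the splitting-locus structure, the lower bound and uniqueness of the minimal stratum from Proposition \ref{prop:codim_count} together with Lemmas \ref{lem:eb} and \ref{lemma:subsheaves}, and properness plus surjectivity of $X_{\vec{a},\vec{e}_b}\to\Sigma_{\vec{e}_b}(\cE)$ to identify the image. The only difference is organizational (you establish irreducibility of $S_{s,d'}(\cE)$ first and then work inside it, whereas the paper treats general $\vec{a}$ in the ambient space and specializes to the balanced case at the end), and you spell out a few details the paper leaves implicit, such as irreducibility of the ambient space and non-emptiness of the minimal stratum.
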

\begin{proof}
Recall that $S_{s,d'}(\cE)$ is the vanishing locus of a section of the vector bundle 
\[f_* \cHom(\cS, \nye{\gamma}^* \pi^* \cG(c+1)),\] 
and that $\overline{\Sigma}_{\vec{a}}(\cS)$ is a splitting locus. Therefore the lower bound on codimension given in the previous proposition is also an upper bound on the codimension of a component of $\overline{\Sigma}_{\vec{a}}(\cS)$ in $S_{s,d'}(\pi^* \cF(c))$. Since $\phi_{\cE}$ is good, $\Sigma_{\vec{e}}(\cE)$ is irreducible, which implies that $X_{\vec{a},\vec{e}}$ is irreducible. Thus Proposition \ref{prop:codim_count} implies that $\overline{\Sigma}_{\vec{a}}(\cS)$ is irreducible of codimension $u(\vec{a})$ in $S_{s,d'}(\cE)$ and $X_{\vec{a},\vec{e}_b}$ is dense in $\overline{\Sigma}_{\vec{a}}(\cS)$. Specializing to the case where $\vec{a}$ is balanced proves that $S_{s,d'}(\cE)$ is irreducible. 

By Proposition \ref{prop:codim_count}, the image of $\overline{\Sigma}_{\vec{a}}(\cS)$ in $B$ is closed, contained in $\overline{\Sigma}_{\vec{e}_b}$, and contains $\Sigma_{\vec{e}_b}$. By the assumption that $\phi_{\cE}$ is good, the closure of $\Sigma_{\vec{e}_b}$ is exactly $\overline{\Sigma}_{\vec{e}_b}$. So the image of $\overline{\Sigma}_{\vec{a}}(\cS)$ must be exactly $\overline{\Sigma}_{\vec{e}_b}$.
\end{proof} 

\begin{proposition} \label{prop:lci}
When $\phi_\cE: B \to \vbst$ is good, the embeddings of Figure \ref{fig:two_embs} defined earlier have codimension equal to $\rk \cR$, where
$\cR = f_* \cHom(\cS, \nye{\gamma}^* \pi^* \cG(c+1))$ in the first case, and $\cR = f_* \cHom(\nye{\gamma}^* \pi^* \cG'(-m-1), \cQ)$ in the second case. In particular, $S_{s,d'}(\cE) = \Quot^{r-s,d-d'}(\cE)$ sits as a local complete intersection under these two embeddings.
\end{proposition}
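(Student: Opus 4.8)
The plan is to pin down the codimension of each embedding by squeezing it between two bounds: an upper bound coming from the fact that the embedded copy of $S_{s,d'}(\cE)$ is the zero scheme of a section of a vector bundle, and a matching lower bound given by the exact codimension of a dense open stratum, which is exactly what Proposition \ref{prop:codim_count} computes. Once the codimension is shown to equal the rank of the relevant bundle, the local complete intersection statement is formal.

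First I would treat the embedding $\iota\colon S_{s,d'}(\cE)\hookrightarrow S_{s,d'}(\pi^*\cF(c))$. As recalled just before the statement, the image of $\iota$ is the zero scheme of the section of $\cR=f_*\cHom(\cS,\nye{\gamma}^*\pi^*\cG(c+1))$ induced by $\cS\to\nye{\gamma}^*\pi^*\cF(c)\to\nye{\gamma}^*\pi^*\cG(c+1)$, so Zariski-locally on $S_{s,d'}(\pi^*\cF(c))$ the subscheme $S_{s,d'}(\cE)$ is cut out by $\rk\cR$ equations; by Krull's height theorem every irreducible component of $S_{s,d'}(\cE)$ therefore has codimension at most $\rk\cR$. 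For the reverse inequality I would invoke Proposition \ref{prop:irred_onestep}: $S_{s,d'}(\cE)$ is irreducible, and taking $\vec a$ to be the balanced splitting type of rank $s$ and degree $d'$, its proof shows that the stratum $X_{\vec a,\vec e_b}$, with $\vec e_b=\vec e_b(r,d,\vec a)$, is dense in $S_{s,d'}(\cE)$. By Proposition \ref{prop:codim_count}, $\codim(X_{\vec a,\vec e_b}\subseteq S_{s,d'}(\pi^*\cF(c)))=u(\vec a)+\rk\cR$, and $u(\vec a)=0$ since $\vec a$ is balanced. Hence $\codim(S_{s,d'}(\cE)\subseteq S_{s,d'}(\pi^*\cF(c)))=\rk\cR$. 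Since a zero scheme of a section of a rank-$\rk\cR$ vector bundle whose codimension equals $\rk\cR$ is cut out by a regular section, the immersion $\iota$ is a regular immersion, i.e. $S_{s,d'}(\cE)$ sits as a local complete intersection of codimension $\rk\cR$.

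For the second embedding $j\colon\Quot^{r-s,d-d'}(\cE)\hookrightarrow\Quot^{r-s,d-d'}(\pi^*\cF'(-m))$ I would run the dual argument: its image is the zero scheme of the section of $\cR'=f_*\cHom(\nye{\gamma}^*\pi^*\cG'(-m-1),\cQ)$ obtained from \eqref{eq:SES2} exactly as $\cR$ was obtained from \eqref{eq:SES1}, which gives the upper bound $\codim\le\rk\cR'$; and repeating the analysis of Proposition \ref{prop:codim_count}, with quotients in place of subsheaves and \eqref{eq:SES2} in place of \eqref{eq:SES1}, identifies the codimension of the dense open stratum with $\rk\cR'$. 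Equivalently, having already established that $\Quot^{r-s,d-d'}(\cE)=S_{s,d'}(\cE)$ is irreducible of a definite dimension, it suffices to check by Riemann--Roch on $\bP^1$ that $\rk\cR'$ equals its codimension in $\Quot^{r-s,d-d'}(\pi^*\cF'(-m))$. In either case $j$ is a regular immersion of codimension $\rk\cR'$.

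Because the codimension computation of Proposition \ref{prop:codim_count} and the irreducibility of Proposition \ref{prop:irred_onestep} are already in place, there is no substantial new obstacle here; the one point that genuinely needs care is the final inference that a zero scheme of expected codimension is cut out by a \emph{regular} section. This is where one uses that the ambient Quot bundle $S_{s,d'}(\pi^*\cF(c))$ (resp. $\Quot^{r-s,d-d'}(\pi^*\cF'(-m))$) is Cohen--Macaulay: it is smooth over $B$, and $B$ is Cohen--Macaulay in every situation to which the proposition is applied (for instance $B=\vbst$ or one of the smooth covers $U_{\vec f}$), so an ideal of height $\rk\cR$ generated by $\rk\cR$ equations is generated by a regular sequence.
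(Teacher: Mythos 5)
Your proposal is correct and follows essentially the same route as the paper: the codimension is pinned down by combining the upper bound from the section of $\cR$ with the lower bound from Proposition \ref{prop:codim_count} applied to the dense stratum $X_{\vec{a}_b,\vec{e}_b}$, and your explicit appeal to Cohen--Macaulayness of the ambient space for the regular-sequence conclusion is a point the paper leaves implicit. The only place you defer real work is the second embedding, where ``repeating the analysis'' amounts to the Euler-characteristic computation the paper carries out via the diagram relating $0 \to S \to E \to Q \to 0$ and $0 \to \cO(\vec{a}_b) \to \cO(\vec{e}_b) \to Q \to 0$ (using $\chi(\cHom(S,Q)) = \chi(\cHom(K,Q)) + \chi(\cHom(\cO(\vec{a}_b),Q))$ together with Lemma \ref{lem:ue-ext-inequality}); this is routine, as you say, but it is the substantive half of the proof.
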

\begin{proof} 
The proposition for the first case follows from the stratification of $S_{s,d'}(B,\cE)$ into irreducible strata $X_{\vec{a},\vec{e}}$ and our computation in Proposition \ref{prop:codim_count}, which says that the strata always have codimension at least $\rk \cR$, and is equal to $\rk \cR$ if and only if $\vec{a}$ is balanced and $\vec{e} = \vec{e}_b(r,d,\vec{a})$. 

To see that the second embedding also has the expected codimension $\rk \cR$, now it suffices to compute the difference in dimension between $X_{\vec{a}_b, \vec{e}_b}$ and $\Quot^{r-s,d-d'}(\pi^*\cF'(-m))$, where $\vec{a}_b$ is balanced and $\vec{e}_b = \vec{e}_b(r,d,\vec{a}_b)$. Let us choose a $k$-point $\alpha \in X_{\vec{a}_b, \vec{e}_b} \subset \Quot^{r-s,d-d'}(\pi^*\cF'(-m))$. Then $\alpha$ viewed as a point of $\Quot^{r-s,d-d'}(\pi^*\cF'(-m))$ corresponds to an exact sequence
\[
	0 \to S \to E \to Q \to 0,
\]
on $\bP^1_k$, where $E \cong \cO(-m)^{\oplus N}$, $N = h^0 \cO(\vec{e}_b)(m)$, and $Q$ is a coherent sheaf of rank $r-s$, degree $d-d'$. Viewed as a point of $X_{\vec{a}_b, \vec{e}_b}$, it corresponds to an exact sequence
\[
	0 \to \cO(\vec{a}_b) \to \cO(\vec{e}_b) \to Q \to 0.
\]
Let $K = \cO(-m-1)^{\oplus M}$, where $M = h^0\cO(\vec{e}_b)(m-1)$. Then these two exact sequences fit in a diagram as in Figure \ref{fig:twoExactSequences}.
\begin{figure}
\[
	\begin{tikzcd}
            & 0 \arrow[d]                                & 0 \arrow[d]                        &                                            &   \\
            & K \arrow[r, double line, no head] \arrow[d] & K \arrow[d]                        &                                            &   \\
0 \arrow[r] & S \arrow[d] \arrow[r]                      & E \arrow[d] \arrow[r]              & Q \arrow[d, double line, no head] \arrow[r] & 0 \\
0 \arrow[r] & \cO(\vec{a}_b) \arrow[r] \arrow[d]         & \cO(\vec{e}_b) \arrow[r] \arrow[d] & Q \arrow[r]                                & 0 \\
            & 0                                          & 0                                  &                                            &  
\end{tikzcd}
\]
\caption{Relationship between the two exact sequences that correspond to the same point $\alpha \in X_{\vec{a}_b, \vec{e}_b}$}
\label{fig:twoExactSequences}
\end{figure}

By deformation theory, the fiber dimension of the map $\gamma: \Quot^{r-s,d-d'}(\pi^*\cF'(-m)) \to B$ is $\chi(\cHom(S, Q))$. But
\begin{align*}
\chi(\cHom(S, Q)) &= \chi(\cHom(K,Q)) + \chi(\cHom(\cO(\vec{a}_b), Q)) \\ 
&= \dim \Hom(K, Q) + \chi(\cHom(\cO(\vec{a}_b), \cO(\vec{e}_b))) - \chi(\cEnd(\cO(\vec{a}_b))) \\ 
&= \dim \Hom(K, Q) + \dim \Hom(\cO(\vec{a}_b), \cO(\vec{e}_b)) - \operatorname{ext}^1(\cO(\vec{a}_b), \cO(\vec{e}_b)) - h^0 \cEnd(\cO(\vec{a}_b)) \\ 
&= \dim \Hom(K, Q) - u(\vec{e}_b) + \dim \Hom(\cO(\vec{a}_b), \cO(\vec{e}_b)) - h^0 \cEnd(\cO(\vec{a}_b)), 
\end{align*}
where the last equality is due to Lemma \ref{lem:ue-ext-inequality}.
It follows now that
\begin{align*}
	\rk \cR &= \dim \Hom(K, Q) \\
	&= \operatorname{fiberdim}(\gamma) + u(\vec{e}_b) - (\dim \Hom(\cO(\vec{a}_b), \cO(\vec{e}_b)) - h^0 \cEnd(\cO(\vec{a}_b))) \\ 
	&= \codim(X_{\vec{a}_b, \vec{e}_b} \subseteq \Quot^{r-s,d-d'}(\pi^*\cF'(-m))).
\end{align*}
\end{proof}

\begin{remark}
Note that both Proposition \ref{prop:irred_onestep} and Proposition \ref{prop:lci} work for arbitrary numerical specifications $s, d', r, d$. In particular they hold when the map $S_{s,d'}(\cE) \to B$ is not birational onto its image, though this is stronger than what we will need.
\end{remark}

\begin{theorem} \label{thm:flag_irred}
Let $B$ be an irreducible scheme of finite type with a good map $\phi_{\cE}: B \to \vbst$. Let $\ur = (r_1,\dots, r_{\ell}), \ud = (d_1, \dots, d_{\ell})$ be such that $r \geq r_1 \geq \cdots \geq r_{\ell} \geq 0$ and $r_i, d_i \in \Z$. Then $\FQuot(B, \cE; \ur, \ud)$ is irreducible. In particular, $\FQuot(\vbst, \cE_{\univ}; \ur, \ud)$ is irreducible.
\end{theorem}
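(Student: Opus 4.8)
The plan is to induct on the length $\ell$ of the flag, using Proposition~\ref{prop:irred_onestep} as the engine; the base case $\ell=0$ is the hypothesis that $B$ is irreducible (the empty flag gives $\FQuot=B$). For the inductive step ($\ell\geq1$) I would strip off the \emph{last} quotient. There is a natural forgetful morphism $\cX := \FQuot(B,\cE;\ur,\ud)\to S := \Quot^{r_\ell,d_\ell}(B,\cE)=S_{r-r_\ell,\,d-d_\ell}(B,\cE)$ remembering only $f^*\cE\twoheadrightarrow\cQ_\ell$. By Proposition~\ref{prop:irred_onestep} (with $s=r-r_\ell$, $d'=d-d_\ell$), the scheme $S$ is irreducible of finite type over $k$, and the universal subsheaf $\cS$ on $S\times\bP^1$ (a subsheaf of the pullback of $\cE$) is a vector bundle — it is flat over $S$ with fibers torsion-free subsheaves of a bundle on the smooth curve $\bP^1$, hence locally free — so $\cS$ defines a \emph{good} morphism $\phi_{\cS}\colon S\to\cB_{r-r_\ell,\,d-d_\ell}$.

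Next I would produce a canonical isomorphism $\cX\cong\FQuot(S,\cS;\ur',\ud')$, where $\ur'=(r_1-r_\ell,\dots,r_{\ell-1}-r_\ell)$ and $\ud'=(d_1-d_\ell,\dots,d_{\ell-1}-d_\ell)$ form a weakly decreasing, nonnegative tuple bounded by $r-r_\ell$. On functors of points, recording the datum $f^*\cE\twoheadrightarrow\cQ_\ell$ of a $T$-point of $\cX$ amounts to a morphism $g\colon T\to S$, and by passing to kernels the remaining data of the flag is exactly a chain of subsheaves $\cK_1\subseteq\cdots\subseteq\cK_{\ell-1}\subseteq g^*\cS$ (where $g^*\cS=\ker(f^*\cE\to\cQ_\ell)$) with $\cK_i$ of rank $r-r_i$ and degree $d-d_i$; from the exact sequence $0\to g^*\cS/\cK_i\to f^*\cE/\cK_i\to\cQ_\ell\to0$ with $\cQ_\ell$ flat over $T$, flatness of $f^*\cE/\cK_i$ over $T$ is equivalent to flatness of $g^*\cS/\cK_i$, so this is precisely a $T$-point of $\FQuot(S,\cS;\ur',\ud')$, compatibly with the equivalence relations on flags. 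Applying the inductive hypothesis to the good pair $(S,\cS)$ and the length-$(\ell-1)$ data $(\ur',\ud')$ then shows $\cX$ is irreducible, completing the scheme case.

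For the last assertion, about $\FQuot(\vbst,\cE_{\univ};\ur,\ud)$, I would descend along the smooth covers $U_{\vec{f}}\to\vbst$ of Section~\ref{subsec:sm_cover}, which are good by the Remark above. Write $\cX=\FQuot(\vbst,\cE_{\univ};\ur,\ud)$ with structure map $q\colon\cX\to\vbst$, and for each splitting type $\vec{f}$ let $\cX_{\vec{f}}=q^{-1}\!\big(\bigcup_{\vec{e}\geq\vec{f}}\SplittOpen\big)$, the preimage of the open image of $U_{\vec{f}}$. Since $\FQuot$ base-changes, $\FQuot(U_{\vec{f}},\cE;\ur,\ud)\to\cX_{\vec{f}}$ is a smooth surjection from the irreducible scheme of the previous paragraph ($U_{\vec{f}}$ being an affine space), so each $\cX_{\vec{f}}$ is irreducible, and the $\cX_{\vec{f}}$ cover $\cX$ because $\bigcup_{\vec{f}}\bigcup_{\vec{e}\geq\vec{f}}\SplittOpen=\vbst$. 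Every $\cX_{\vec{f}}$ contains the open substack $V:=q^{-1}(\SplittOpen)$ with $\vec{e}$ the unique balanced splitting type, since the balanced type dominates every $\vec{f}$; assuming $\cX\neq\emptyset$ (otherwise the statement is vacuous — and by Lemma~\ref{lem:eb} the balanced bundle carries a flag of quotients of type $(\ur,\ud)$ as soon as some bundle does), $V$ is a nonempty open of the irreducible $\cX_{\vec{f}}$, hence irreducible and dense in every $\cX_{\vec{f}}$, so $\cX=\bigcup_{\vec{f}}\cX_{\vec{f}}\subseteq\overline{V}$ and $\cX=\overline{V}$ is irreducible.

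I expect the main obstacle to be the functorial identification $\cX\cong\FQuot(S,\cS;\ur',\ud')$ in the inductive step — in particular the flatness bookkeeping and the verification that the universal subsheaf $\cS$ on $\Quot^{r_\ell,d_\ell}(B,\cE)\times\bP^1$ is a genuine vector bundle, so that Proposition~\ref{prop:irred_onestep} applies to the pair $(S,\cS)$ — together with the care needed in the stack descent, where no single $U_{\vec{f}}$ covers $\vbst$ but all the $\cX_{\vec{f}}$ share the common dense irreducible open $V$.
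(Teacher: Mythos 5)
Your scheme-level induction is, up to re-indexing, the paper's own argument: the paper builds $\FQuot(B,\cE;\ur,\ud)$ as exactly the same tower of one-step relative Quot schemes and invokes Proposition \ref{prop:irred_onestep} at each stage to propagate both irreducibility and goodness of the pair; peeling off $\cQ_\ell$ first instead of $\cQ_1$ changes nothing, and your flatness bookkeeping for the identification $\cX\cong\FQuot(S,\cS;\ur',\ud')$ (the two-out-of-three property for flatness in $0\to g^*\cS/\cK_i\to f^*\cE/\cK_i\to\cQ_\ell\to 0$) is correct.

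The descent to $\vbst$, however, contains a genuine gap. You take the common dense open to be $V=q^{-1}$ of the balanced open stratum and assert that $V\neq\emptyset$ whenever $\cX\neq\emptyset$, citing Lemma \ref{lem:eb}. That lemma says essentially the opposite of what you need: the maximal splitting type admitting an $\cO(\vec{a})$-subsheaf is $\vec{e}_b(r,d,\vec{a})=(\text{Bal},\vec{a}_+)$, which is in general \emph{not} the balanced type, and the balanced bundle need not admit the prescribed subsheaf at all. Concretely, take $r=2$, $d=0$, $\ell=1$, $(r_1,d_1)=(1,-1)$, i.e.\ rank-one, degree-one subsheaves: $\cO(-1)\oplus\cO(1)$ contains $\cO(1)$, but every rank-one subsheaf of $\cO\oplus\cO$ has degree $\leq 0$, so here $\cX\neq\emptyset$ while $V=\emptyset$ and the containment $\cX=\bigcup_{\vec{f}}\cX_{\vec{f}}\subseteq\overline{V}$ collapses. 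Two ways to repair this: (i) replace the balanced type by the maximal splitting type $\vec{e}_{\max}$ admitting a flag of type $(\ur,\ud)$ — but then the existence of such a maximum, and the fact that it dominates every $\vec{f}$ with $\cX_{\vec{f}}\neq\emptyset$, must itself be proved, essentially by iterating Lemma \ref{lem:eb} and the image statement of Proposition \ref{prop:irred_onestep}; or (ii) argue as the paper does: if $\cX$ had two components, generically splitting as $\vec{e}$ and $\pvec{e}'$, both would meet the single open substack $\cX_{\vec{f}}$ for any $\vec{f}$ with $\vec{f}\leq\vec{e}$ and $\vec{f}\leq\pvec{e}'$, contradicting the irreducibility of $\cX_{\vec{f}}$ that you have already established; this avoids identifying a common dense open altogether.
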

\begin{proof}
We can build $\FQuot(B, \cE; \ur, \ud)$ as a tower of relative Quot schemes. Let $\cS_2$ be the universal subbundle of $\cE$ of rank $r-r_2$, degree $d-d_2$ on $Q \coloneq \FQuot(B, \cE; (r_2,\dots,r_{\ell}), (d_2,\dots,d_{\ell}))$, then $\FQuot(B, \cE; \ur, \ud)$ can also be viewed as $S_{r-r_1,d-d_1}(Q, \cS_2)$. Repeated applications of Proposition \ref{prop:irred_onestep} along this tower shows that $\FQuot(B, \cE; \ur, \ud)$ is irreducible.

Let $Z$ be a component of $\FQuot(\vbst, \cE_{\univ}; \ur, \ud)$. Then the restriction of $\cE$ to $Z$ must generically split as some $\vec{e}$. Thus, if $\FQuot(\vbst, \cE_{\univ}; \ur, \ud)$ has two components where $\cE$ generically splits as $\vec{e}$ and $\pvec{e}'$, then we will detect them when we base change to $U_{\vec{f}}$ where $\vec{f} \leq \vec{e}$ and $\vec{f} \leq \pvec{e}'$. But $U_{\vec{f}} \to \vbst$ is good, so our argument above shows that the base change of $\FQuot(\vbst, \cE_{\univ}; \ur, \ud)$ to $U_{\vec{f}}$ is irreducible for all $\vec{f}$.
\end{proof}

\subsection{Smoothness} Now we will prove smoothness of $\SplittNyeI$ when $I$ is $\vec{e}$-admissible. The key ingredient is an exact sequence that allows us to count the dimension of the tangent space.
\begin{proof}[Proof of smoothness claim in Theorem \ref{thm:smoooth}]
Recall from Section \ref{subsec:sm_cover} and specifically Proposition \ref{prop:moduli_construction} that there is an explicit smooth atlas $\{U_{\pvec{e}'}\} = \{H^1 \cEnd (\cO(\pvec{e}'))\}$ of $\vbst$, indexed by all possible splitting types of rank $r$ and degree $d$, which satisfies the following properties:
\begin{itemize}
  \item $\dim U_{\pvec{e}'} = u(\pvec{e}')$;
  \item $\Sigma_{\pvec{e}'}(U_{\pvec{e}'})$ is a single point, which we call $\{0\}$ (as it is $0$ in an affine space);
  \item the map $U_{\pvec{e}'} \to \vbst$ is good, in particular $\nye{U}_{\vec{e},\pvec{e}'}  = \nye{U}_{I, \vec{e},\pvec{e}'} := \SplittNyeI \times_{\vbst} U_{\pvec{e}'} = \Quot(U_{\pvec{e}'}, \cE_{U_{\pvec{e}'}}; \ur_{I, \HN}, \ud_{I,\HN})$ is irreducible;
  \item $U_{\pvec{e}'}$ maps surjectively onto $\bigcup_{\pvec{f} \geq \pvec{e}'} \SplittOpenF$.
\end{itemize}  
Using this smooth atlas, to prove that $\SplittNyeI$ is smooth, it suffices to prove one of the following two equivalent claims for all $\pvec{e}'$: 
\begin{enumerate}
  \item the restriction of $\SplittNyeI$ to $\bigcup_{\pvec{f} \geq \pvec{e}'} \SplittOpenF$ is smooth; 
  \item the scheme
      \[
      \nye{U}_{\vec{e},\pvec{e}'}  = \nye{U}_{I, \vec{e},\pvec{e}'} := \SplittNyeI \times_{\vbst} U_{\pvec{e}'} = \Quot(U_{\pvec{e}'}, \cE_{U_{\pvec{e}'}}; \ur_{I, \HN}, \ud_{I,\HN})
      \]
is smooth. 
\end{enumerate}

The proof will proceed by induction on the dominance order for $\pvec{e}'$. Since $\SplittNyeI$ surjects onto $\Splitt$, when $\pvec{e}' > \vec{e}$, the restriction of $\SplittNyeI$ to $\bigcup_{\pvec{f} \geq \pvec{e}'} \SplittOpenF$ is empty.

Now assume that both (1) and (2) are true for all $\pvec{e}'' > \pvec{e}'$. Let $\pi: \nye{U}_{\vec{e},\pvec{e}'} \to U_{\pvec{e}'}$. Then since (1) is true for all $\pvec{e}'' > \pvec{e}'$, $U_{\vec{e},\pvec{e}'}$ is smooth away from $\pi^{-1}(\{0\})$. By Theorem \ref{thm:flag_irred}, $\nye{U}_{\vec{e},\pvec{e}'}$ is irreducible, so 
\[
\dim \nye{U}_{\vec{e},\pvec{e}'} = \dim \nye{U}_{\vec{e},\pvec{e}'} \setminus \pi^{-1}(\{0\}) = \dim \SplittNyeI - \dim \vbst+ \dim U_{\pvec{e}'} = u(\pvec{e}') - u(\vec{e}).
\] 
Let $[\alpha]$ be a point in $\pi^{-1}(\{0\})$, corresponding to a flag $\alpha: E_1 \hookrightarrow E_2 \hookrightarrow \cdots \hookrightarrow E_k \hookrightarrow \cO(\pvec{e}')$. To show that $\nye{U}_{\vec{e},\pvec{e}'}$ is everywhere smooth, it suffices to check that for all such $[\alpha] \in \pi^{-1}(\{0\})$, the dimension of the tangent space is
\[
  \dim \T_{[\alpha]} \nye{U}_{\vec{e},\pvec{e}'} = u(\pvec{e}') - u(\vec{e}).
\]

Because the tangent space of $0 \in U_{\pvec{e}'}$ is just $H^1 \cEnd(\cO(\pvec{e}'))$, the space of first-order deformations of $\cO(\pvec{e}')$, the tangent space of $U_{\vec{e},\pvec{e}'}$ at $[\alpha]$ has a modular interpretation:
\[
  \T_{[\alpha]} \nye{U}_{\vec{e},\pvec{e}'} = 
  \left \{
    \begin{tabular}{lll}
      \text{A flag of subsheaves $\cE_{1} \hookrightarrow \cE_2 \hookrightarrow \dots \hookrightarrow \cE_{k+1} = \cE$ on $\bP^1_{k[\varepsilon]/(\varepsilon^2)}$ specializing to $\alpha$} \\
      \text{over $\bP^1_k$, such that each quotient $\cE/\cE_i$ is flat over $k[\varepsilon]/(\varepsilon^2)$}
    \end{tabular}
  \right\} / \sim.
\]
We claim that this tangent space fits inside an exact sequence
\[
0 \to \oplus_{i = 1}^k \End(E_i) 
\to 
\oplus_{i = 1}^{k} \Hom(E_i, E_{i+1}) \to 
\T_{[\alpha]} \nye{U}_{\vec{e},\pvec{e}'}
\to \oplus_{i = 1}^{k+1} H^1 \cEnd (E_i) \to \oplus_{i = 1}^{k} \Ext^1(E_i,E_{i+1}) \to 0.
\]
We will describe each of the maps and argue exactness going from right to left.
\begin{enumerate}
  \item To specify the last map is the same as specifying maps $\phi_{i,j} = H^1 \cEnd (E_i) \to \Ext^1(E_j, E_{j+1})$ for all $1 \leq i \leq k+1, 1 \leq j \leq k$. We set $\phi_{i,j}$ to be zero if $i \neq j$ and $i \neq j+1$. Note that $\Ext^1(E_j, E_{j+1}) = H^1 \cHom(E_j, E_{j+1})$. The map $\phi_{i,i}$ is induced by the map of bundles $\cEnd (E_i) \to \cHom(E_i,E_{i+1})$ by postcomposing with the inclusion $E_i \hookrightarrow E_{i+1}$ in the flag. The map    $\phi_{i,i-1}$ is the negation of the map induced by the map of bundles $\cEnd (E_i) \to \cHom(E_{i-1}, E_i)$, which is defined by precomposing with the inclusion $E_{i-1} \hookrightarrow E_i$.
  Let $i < k$. Then   
  \[
    0 \to \cHom(E_{i+1}/E_i, E_{i+1}) \to \cEnd (E_{i+1}) \to \cHom(E_i, E_{i+1}) \to 0
  \]
  is exact, because $E_{i+1}$ is a vector bundle. Since we are on a curve, the induced map $H^1 \cEnd (E_{i+1}) \to H^1 \cHom(E_i, E_{i+1}) = \Ext^1(E_i, E_{i+1})$ is surjective. This implies that the last map is surjective.
  \item The kernel of the last map corresponds to those deformations of $E_1, \dots, E_k$ that can be lifted to a deformation of the flag. To prove this, it suffices to prove it for a single map, namely if $E \hookrightarrow F$, then the kernel of $H^1 \cEnd (E) \oplus H^1 \cEnd (F) \to \Ext^1(E,F)$ corresponds to those pairs of first-order deformations of $E$ and $F$ that can be lifted to a deformation of $E \hookrightarrow F$. This is the content of Lemma \ref{lem:def_maps} below. The map $\T_{[\alpha]} \nye{U}_{\vec{e},\vec{e}'} \to \oplus_{i = 1}^{k+1} H^1 \cEnd (E_i)$ is the map that forgets the deformations of the inclusions, and only remembers the deformations of each bundle. It clearly maps onto the kernel of the last map, given our interpretation of the kernel.
  \item The kernel of the third map corresponds to those deformations of the flag that do not deform the vector bundles. In other words, there is a map
  \[
    \T_{[\alpha]} \prod_{i = 1}^m \Subsheaves(E_i, E_{i+1}) \to 
    \T_{[\alpha]} \nye{U}_{\vec{e},\pvec{e}'},
  \] 
  and the kernel of the third map is precisely the image of this map. In particular we can think about each $\Subsheaves(E_i, E_{i+1})$ individually. The tangent space of $\Subsheaves(E_i, E_{i+1})$ at $\phi: E_i \hookrightarrow E_{i+1}$ is the cokernel of $H^0 \cEnd(E_i) \to H^0(\cHom(E_i,E_{i+1}))$, induced by post-composing with $\phi$.
  \item The first map is the direct sum of the maps $\End(E_i) \to \Hom(E_i,E_{i+1})$, which are all injective, as they are induced from the injective maps of sheaves $\cEnd(E_i) \to \cHom(E_i,E_{i+1})$.
\end{enumerate}

Using this exact sequence, we can compute $\dim \T_{[\alpha]} \nye{U}_{\vec{e},\pvec{e}'}$:
\begin{align*}
\dim \T_{[\alpha]} \nye{U}_{\vec{e},\pvec{e}'} & = h^1(\cEnd (\cO(\pvec{e}'))) - \sum_{i = 1}^{m} \chi(\cEnd(E_i')) + \sum_{i = 1}^{m} \chi(\cHom(E_i', E_{i+1}')) \\ 
& = u(\pvec{e}') + \sum_{i = 1}^m \chi(\cHom(E_i', E_{i+1}'/E_i')) \\ 
& = u(\pvec{e}') + \sum_{i = 1}^m \chi(\cHom(E_i, E_{i+1}/E_i)) \\ 
& = u(\pvec{e}') - \sum_{i = 1}^m h^1(\cHom(E_i, E_{i+1}/E_i)) \\ 
& = u(\pvec{e}') - u(\vec{e}).
\end{align*}
\end{proof}

\begin{lemma} \label{lem:def_maps}
Let $A$ be an Artinian local ring with residue field $k$, and let $I$ be a square-zero ideal in $A$. Given a map of vector bundles $\phi: E \to F$ on $X \times \Spec A/I$, the deformations of $E, F$ to $\Spec A$ that can be extended to a deformation of $\phi$ is characterized by the kernel of a map $\Ext^1_{X_0}(E_0, E_0 \otimes_k I) \oplus \Ext^1_{X_{0}}(F_0, F_0 \otimes_k I) \to \Ext^1_{X_0}(E_0, F_0 \otimes_k I)$, which is the difference of the natural maps $\Ext^1_{X_0}(E_0, E_0 \otimes_k I) \to \Ext^1_{X_0}(E_0, F_0 \otimes_k I)$ and $\Ext^1_{X_{0}}(F_0, F_0 \otimes_k I) \to \Ext^1_{X_0}(E_0, F_0 \otimes_k I)$ induced by $\phi$.
\end{lemma}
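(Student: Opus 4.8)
The plan is to reduce the statement to the standard long exact sequence computing $\Ext$ in a category of arrows. Write the square-zero extension as $0 \to I \to A \to A/I \to 0$, set $X_A = X \times_k \Spec A$ and $X_0 = X \times_k \Spec k$, and for an $A/I$-flat sheaf $M$ on $X \times_k \Spec A/I$ write $M_0$ for its restriction to $X_0$. I will use the standard fact that the $A$-flat deformations over $X_A$ of an $A/I$-flat $\cO$-module $M$ form a pseudo-torsor under $\Ext^1_{X_0}(M_0, M_0 \otimes_k I)$, which becomes a torsor once it is nonempty. The key observation is that a deformation of the morphism $\phi\colon E \to F$ is exactly a deformation of the single object $P := (E_0 \xrightarrow{\phi_0} F_0)$ in the abelian category $\mathcal{C}$ of $\cO_X$-module homomorphisms --- equivalently, of $\cO_X$-linear representations of the $A_2$ quiver --- so that deformations of $\phi$ form a pseudo-torsor under $\Ext^1_{\mathcal{C}}(P, P\otimes_k I)$.

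Next I would compare these deformation problems along the exact forgetful functor $\mathcal{C} \to (\cO_X\text{-mod}) \times (\cO_X\text{-mod})$, $(M \to N) \mapsto (M, N)$: it induces a morphism of deformation torsors that is linear over the natural map $\Ext^1_{\mathcal{C}}(P, P\otimes_k I) \to \Ext^1_{X_0}(E_0, E_0 \otimes_k I) \oplus \Ext^1_{X_0}(F_0, F_0\otimes_k I)$. Hence a pair $(E', F')$ of deformations of $E$ and $F$ extends to a deformation of $\phi$ precisely when it lies in the image of the natural map from deformations of $P$ to (deformations of $E$) $\times$ (deformations of $F$), and this image is either empty or a coset of the image of the displayed $\Ext^1$-map.

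To identify that image I will use the exact sequence
\[
  \Ext^1_{\mathcal{C}}(P, P\otimes_k I) \to \Ext^1_{X_0}(E_0, E_0\otimes_k I) \oplus \Ext^1_{X_0}(F_0, F_0\otimes_k I) \xrightarrow{\ d\ } \Ext^1_{X_0}(E_0, F_0\otimes_k I),
\]
which comes from the distinguished triangle relating $\Ext_{\mathcal{C}}(P, -)$ to the $\Ext$-groups of the two pieces, itself obtained by assembling locally free resolutions of $E_0$ and $F_0$ into a resolution of the arrow $P$ via the quiver structure. On these $\Ext^1$ groups the middle map is $d(u,v) = (\phi_0)_* u - (\phi_0)^* v$ --- postcomposition with $\phi_0 \otimes \id_I$ minus precomposition with $\phi_0$ --- which is exactly the map in the statement. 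Exactness identifies the image of $\Ext^1_{\mathcal{C}}(P, P\otimes_k I)$ with $\ker d$, so, combined with the previous step, the pairs of deformations extending $\phi$ form a coset of $\ker d$. In the first-order case $A = k[\varepsilon]/(\varepsilon^2)$ used in Theorem \ref{thm:smoooth}, the trivial deformations lie in every set involved, so this coset is $\ker d$ itself, which is the assertion.

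I expect the main obstacle to be bookkeeping rather than anything conceptual: verifying that the forgetful functor induces a comparison of deformation torsors genuinely linear over the stated map on $\Ext^1$ groups (so that ``extendable'' is synonymous with ``lies in the image''), and pinning down the distinguished triangle together with the identification of its connecting map on $\Ext^1$ with $d$ up to the correct sign. If one prefers to avoid the arrow category altogether, the same conclusion follows by a direct computation: for given deformations $E', F'$, the connecting homomorphism of the sequence $0 \to j_*(F_0 \otimes_k I) \to F' \to F \to 0$ on $X_A$ --- a genuine short exact sequence when $\mathfrak{m}_A I = 0$, which holds in all of our applications --- sends $\phi$, viewed in $\Hom_{X_A}(E', F) = \Hom_{X \times_k \Spec A/I}(E, F)$, to a class $\delta(\phi) \in \Ext^1_{X_A}(E', j_*(F_0 \otimes_k I)) \cong \Ext^1_{X_0}(E_0, F_0 \otimes_k I)$; one checks that $\phi$ extends iff $\delta(\phi) = 0$, and a \v{C}ech-cocycle computation shows that replacing $(E', F')$ by the deformation obtained by adding $(u, v)$ changes $\delta(\phi)$ by $(\phi_0)_* u - (\phi_0)^* v$. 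That last cocycle computation is the real crux of either approach.
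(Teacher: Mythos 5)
Your proposal is correct, and both of your suggested routes would work, but neither is the argument the paper uses; the paper's proof is considerably more economical. It identifies a deformation of $E$ over $\Spec A$ with the extension $0 \to E_0 \otimes_k I \to \nye{E} \to E \to 0$ (and likewise for $F$), observes that a deformation of $\phi$ is precisely a map of these two short exact sequences restricting to $\phi_0 \otimes \id_I$ on kernels and $\phi$ on quotients, and then invokes the elementary fact that such a ladder exists if and only if the pushout of the first extension along $\phi_0 \otimes \id_I$ coincides with the pullback of the second along $\phi$ in $\Ext^1_{X_{A/I}}(E, F_0\otimes_k I) = \Ext^1_{X_0}(E_0, F_0\otimes_k I)$ --- that is, if and only if $(\phi_0)_*[\nye{E}] - \phi^*[\nye{F}] = 0$. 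This gives the kernel description in one step, with no arrow category, no recollement-type long exact sequence, and no \v{C}ech cocycle computation. What your first approach buys is a conceptual identification of $\ker d$ with the image of the deformation space of the arrow object $P$ itself, which is more than the lemma asks for; the cost is setting up $\Ext^1$ in the arrow category and verifying the connecting map is $d$ up to sign. Your second approach is closest in spirit to the paper's, and your observation that in general the extendable pairs form a coset of $\ker d$ (equal to $\ker d$ in the first-order case actually used in Theorem \ref{thm:smoooth}, since the trivial deformations extend $\phi$ trivially) is a point the paper glosses over. One small caveat: the identification $\Ext^1_{X_{A/I}}(E, F_0 \otimes_k I) \cong \Ext^1_{X_0}(E_0, F_0 \otimes_k I)$ implicitly uses that $I$ is a $k$-vector space, i.e.\ the small-extension hypothesis you flag; this is harmless here but worth stating.
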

\begin{proof}
Corresponding to deformations $\nye{E}, \nye{F}$ of $E, F$ over $\Spec A$, we have two extensions
\begin{align*}
0 \to E_0 \otimes_k I \to \nye{E} \to E \to 0, \\ 
0 \to F_0 \otimes_k I \to \nye{F} \to F \to 0.
\end{align*}
To deform the map $\phi: E \to F$ into a map of $\nye{\phi}: \nye{E} \to \nye{F}$ is the same as writing down a map of these two short exact sequences such that the maps on the kernel and quotient are induced by $\phi$. This is possible if and only if the pushout of the first sequence by the map $\phi_0 \otimes I: E_0 \otimes_k I \to F_0 \otimes_k I$ agrees with the pullback of the second sequence by the map $\phi: E \to F$. This is if and only if the difference of their classes in $\Ext^1_{X_{A/I}}(E, F_0 \otimes_k I) = \Ext^1_{X_0}(E_0, F_0 \otimes_k I)$ is zero.
\end{proof}

\begin{remark}
One can define Harder-Narasimhan strata (HN strata) inside the stack of vector bundles on a fixed curve $C$ as those vector bundles with a fixed Harder-Narasimhan polygon. These HN strata are generalizations of $\SplittOpen$ to higher genus. One can also define an analogue of $\Splitt$ as the stratum of vector bundles whose HN polygon is a specialization of some fixed polygon. In the literature, the standard way to show that the locally closed HN strata are smooth is via hyper-Quot constructions, similar to $\SplittNyeHN$. Proposition 15.4.2 of \cite{lePotier} gives a general criterion for smoothness for such hyper-Quot constructions, and Corollary 15.4.3 uses this criterion to prove smoothness of HN strata for complete families of vector bundles. The main difference between the case of higher genus curves and $\bP^1$ is that we no longer know if the hyper-Quot construction is irreducible, though it must contain one component which is a resolution of singularities for Harder-Narasimhan strata.
\end{remark}

\subsection{Fibers of the resolution}
To get a better sense of the resolutions $\SplittNyeI$, we give some examples of the fiber of $\SplittNyeHN$ over a point $p \in \mathbf{\Sigma}_{\pvec{e}'}$ when $\pvec{e}' < \vec{e}$. These examples show that the fibers can be reducible with components of different dimensions.
\begin{example}
Let $\vec{e} = (-2,0,2)$, and let $\pvec{e}' = (-3,0,3)$. We can stratify the fiber by how each bundle in a flag splits, and there are two possibilities:
  \[
    \alpha = [\cO(2) \hookrightarrow \cO(0,2) \hookrightarrow \cO(-3,0,3)],
  \]
  \[
    \beta = [\cO(2) \hookrightarrow \cO(-1,3) \hookrightarrow \cO(-3,0,3)].
  \]
Let $X_{\alpha}, X_{\beta}$ denote the collection of flags with such splitting types. $X_{\beta} \cong \bP^1 \times \bP^1$ is closed. $X_{\alpha}$ really is only the data of the second inclusion, which can be identified with quotients $\cO(0,3) \twoheadrightarrow \kappa(p)$ which do not factor through the projection to $\cO$. This is isomorphic to $\bP(\cO(0,3))$ over $\bP^1$ minus the directrix. The closure $\overline{X}_{\alpha}$ can be identified with $\bP(\cO(0,3))$, and is glued to $X_{\beta} = \bP^1 \times \bP^1$ along the directrix of the former and the diagonal of the latter.
\end{example}

\begin{example} Let $\vec{e} = (-1,0,1)$ and $\pvec{e}' = (-2,-1,3)$. In this case, we may have
  \[
    \alpha = [\cO(1) \hookrightarrow \cO(-1,2) \hookrightarrow \cO(-2,-1,3)],
  \]
  \[
    \beta = [\cO(1) \hookrightarrow \cO(-2,3) \hookrightarrow \cO(-2,-1,3)].
  \]
The dimensions are $\dim X_{\alpha} = 3$, $\dim X_{\beta} = 4$. The stratum $X_{\beta}$ is closed and isomorphic to $\bP^2 \times \bP^2$. $X_{\alpha}$ is isomorphic to $\bP^1 \times V$, where $V$ is isomorphic to $\bP(\cO(-1,3))$ minus the directrix. The intersection $\overline{X}_{\alpha} \cap X_{\beta}$ can be identified with those flags of type $\beta$
\[
\cO(1) \xrightarrow{\vectwo{0}{Q}} \cO(-2,3) \xrightarrow{\begin{bmatrix} 0 & 0 \\ L & 0 \\ 0 & 1 \end{bmatrix}} \cO(-2,-1,3)
\]
where $L, Q$ are forms of degrees $1$ and $2$, and $L$ divides $Q$. Since $L$ and $Q$ are defined up to scaling, this intersection is isomorphic to $\bP^1 \times \bP^1$.
\end{example}

\section{Rational singularities}
\label{sec:rat_sing}
In this section, we will use the resolution of singularities constructed in the previous section to show that tame splitting loci have rational singularities. Throughout this section, we will assume that we are working over an algebraically closed field of characteristic zero. 

We will need to repeatedly apply Littlewood-Richardson rules and we also make heavy use of the Borel-Weil-Bott theorem. For more details on Littlewood-Richardson rules, we refer the reader to Section 2.3 of \cite{Weyman_2003}, and Theorem 1.4 of \cite{KOUWENHOVEN199177}. For details on Borel-Weil-Bott, the reader may refer to Corollary 4.1.9 of \cite{Weyman_2003}. Given partitions $\mu, \alpha, \beta$, we will use $c_{\alpha, \beta}^{\mu}$ to denote the Littlewood-Richardson coefficient, which, among other things, is the multiplicity of $\bfS_{\alpha} V \otimes \bfS_{\beta} W$ in the decomposition of $\bfS_{\mu}(V \oplus W)$.

Let $\vec{e}$ be a tame splitting type. Then $\Splitt$ admits a resolution of singularities by $\SplittNyeI$, where $|I| = 1$. Let $\SplittNye = \SplittNyeI = \Quot^{r_1, d_1}(\vbst, \cE_{\univ})$. 

Having rational singularities is a smooth local property, so we may assume that we are working with a smooth scheme $B$ of finite type, equipped with a smooth map $B \to \vbst$ corresponding to a vector bundle $\cE$ on $B \times \bP^1$. The reader may refer to Figure \ref{fig:embedding_of_nyesplitt_into_F} for our labelling of the relevant maps. We denote the pullback of $\SplittNye$ to $B$ by $\nyesplitt$. Up to twisting, we may assume that the vector bundle $\cE$ on $B \times \bP^1$ is fiberwise globally generated. By Theorem \ref{thm:LLVnormality}, $\splitt$ is normal, so the natural map $\cO_{\splitt} \to \rho_*\cO_{\nyesplitt}$ is an isomorphism. To show that $\Splitt$ has rational singularities, it remains to show that $R^i \rho_* \cO_{\nyesplitt} = 0$ for $i > 0$. Recall from Proposition \ref{prop:lci} that $\nyesplitt$ admits an embedding into $F = \Quot^{r_1,d_1}(\pi^* \pi_* \cE)$, and it is cut out as a local complete intersection by a section of the vector bundle $\cR = f_* \cHom(\nye{\gamma}^* \pi^* (\pi_* \cE(-1))(-1), \cQ)$. The proposition below shows that in this situation, it suffices to prove the vanishing of certain higher pushforwards of the wedge powers of $\cR^{\vee}$.

\begin{figure}
\begin{tikzcd}
\nyesplitt \arrow[r, "i", hook] \arrow[d, "\rho", two heads] & {F = \Quot^{r_1,d_1}(\pi^*\pi_*\cE)} \arrow[d, "\gamma", two heads] 
& F \times \bP^1 \arrow[l, "f"'] \arrow[d, "\nye{\gamma}"]
\\
\splitt \arrow[r, "j", hook]              & B  & B \times \bP^1 \arrow[l, "\pi"]                                       
\end{tikzcd}
\caption{Embedding of $\nyesplitt$ into $F$}
\label{fig:embedding_of_nyesplitt_into_F}
\end{figure}

\begin{proposition} \label{prop:coh_cond} Suppose that we have the following commutative diagram of schemes, where $i$ and $j$ are closed immersions, and $i$ is a local complete intersection cut out by a section of the vector bundle $\cR$ on $F$. Suppose that $R^{> i} \gamma_* \wedge^i \cR^{\vee} = 0$ for $i \geq 0$. Then $R^{> 0} \rho_* \cO_{\nye{Z}} = 0$.
\[
\begin{tikzcd}
\nye{Z} \arrow[d, "\rho"] \arrow[r, "i", hook] & F \arrow[d, "\gamma"] \\
Z \arrow[r, "j", hook]                                  & B                          
\end{tikzcd}
\]
\end{proposition}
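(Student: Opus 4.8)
The plan is to resolve $i_*\cO_{\nye{Z}}$ on $F$ by a Koszul complex, push it forward along $\gamma$, and read off the vanishing from the hypothesis via the hypercohomology spectral sequence; this parallels the classical argument that determinantal varieties have rational singularities.

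First, since $i$ realizes $\nye{Z}$ as a local complete intersection of codimension $c=\rk\cR$ cut out by a section $s\in\Gamma(F,\cR)$, that section is a regular section, so the dual cosection $s^\vee\colon\cR^\vee\to\cO_F$ gives rise to a Koszul resolution
\[
  0\to\wedge^{c}\cR^\vee\to\cdots\to\wedge^{2}\cR^\vee\to\cR^\vee\xrightarrow{\,s^\vee\,}\cO_F\to i_*\cO_{\nye{Z}}\to 0 .
\]
The only non-formal input here is the exactness of this complex in negative cohomological degrees, which is exactly the local complete intersection hypothesis on $i$.

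Next I would push this forward along $\gamma$. Because $i$ and $j$ are closed immersions, $i_*$ and $j_*$ are exact, so $Ri_*=i_*$ and $Rj_*=j_*$; combined with $\gamma\circ i=j\circ\rho$ this gives $R\gamma_*\bigl(i_*\cO_{\nye{Z}}\bigr)\cong R(j\rho)_*\cO_{\nye{Z}}\cong j_*\,R\rho_*\cO_{\nye{Z}}$. Feeding the Koszul resolution into the hypercohomology spectral sequence for $R\gamma_*$ produces
\[
  E_1^{p,q}=R^{q}\gamma_*\wedge^{-p}\cR^\vee\ \Longrightarrow\ R^{p+q}\gamma_*\bigl(i_*\cO_{\nye{Z}}\bigr)=j_*\,R^{p+q}\rho_*\cO_{\nye{Z}},
\]
with $p$ ranging over $-c,\dots,0$. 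Fix $n>0$. On the diagonal $p+q=n$, every term has the shape $R^{n+i}\gamma_*\wedge^{i}\cR^\vee$ with $i=-p\in\{0,\dots,c\}$, and since $n+i>i$ the hypothesis $R^{>i}\gamma_*\wedge^{i}\cR^\vee=0$ forces it to vanish. Hence $E_1^{p,q}=0$ whenever $p+q>0$, so the abutment vanishes in positive degrees: $j_*R^{n}\rho_*\cO_{\nye{Z}}=R^{n}\gamma_*\bigl(i_*\cO_{\nye{Z}}\bigr)=0$ for all $n>0$. Finally, since $j$ is a closed immersion, $j_*$ is faithful on quasi-coherent sheaves (indeed $j^*j_*=\id$), so $R^{n}\rho_*\cO_{\nye{Z}}=0$ for all $n>0$, which is the claim.

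I do not expect any genuine obstacle inside this proposition: it is a purely homological consequence of possessing a Koszul resolution compatible with the hypothesized higher-pushforward vanishing, and the argument is insensitive to any further structure of $F$, $B$, or $\gamma$. The real difficulty lies elsewhere in the paper, namely in verifying that hypothesis for the relevant bundle $\cR=f_*\cHom(\nye{\gamma}^*\pi^*(\pi_*\cE(-1))(-1),\cQ)$, which will be reduced to the cohomology vanishing of Theorem~\ref{thm:general_vanishing} and ultimately proved by Borel--Weil--Bott on a product of Grassmannians after Str{\o}mme's embedding.
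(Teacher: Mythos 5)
Your proposal is correct and takes essentially the same route as the paper: both use the Koszul resolution of $i_*\cO_{\nye{Z}}$, the hypothesis $R^{>i}\gamma_*\wedge^i\cR^\vee=0$, the identification $R^n\gamma_*(i_*\cO_{\nye{Z}})=j_*R^n\rho_*\cO_{\nye{Z}}$, and faithfulness of $j_*$. The only cosmetic difference is that you package the vanishing via the hypercohomology spectral sequence, whereas the paper splits the Koszul complex into short exact sequences on the syzygy kernels $\cK_i$ and runs a descending induction — these are the same argument.
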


\begin{proof}
The condition tells us that the Koszul complex $\wedge^{\bullet} \cR^{\vee} \to i_* \cO_{\nye{Z}}$ is a resolution of $i_* \cO_{\nye{Z}}$ by locally free $\cO_F$-modules. Let $\cK_i = \ker(\wedge^{i-1} \cR^{\vee} \to \wedge^{i-2} \cR^{\vee})$, so that for $i \geq 1$,
\[
0 \to \cK_{i+1} \to \wedge^{i} \cR^{\vee} \to \cK_{i} \to 0,
\]
and 
\[
0 \to \cK_1 \to \cO_F \to i_* \cO_{\nye{Z}} \to 0.
\]
We claim that $R^{> 1} \gamma_* \cK_{1} = 0$. Let $M = \rk \cR$. By assumption $R^{> M} \gamma_* \cK_{M} = R^{> M} \gamma_* \wedge^M \cR^{\vee} = 0$. Assume that $R^{> i+1} \gamma_* \cK_{i+1} = 0$. Then the first short exact sequence, combined with the fact that $R^{> i}\gamma_* \wedge^i \cR^{\vee} = 0$, shows that $R^{> i} \gamma_* \cK_i = 0$. The claim follows by induction. 

Then using the second exact sequence, we see that $R^{>0} \gamma_*(i_*\cO_{\nye{Z}}) = 0$. The diagram is commutative and pushforwards by closed immersions are exact, so $R^{k} \gamma_*(i_*\cO_{\nye{Z}}) = j_* R^k \rho_* \cO_{\nye{Z}}$ for all $k \geq 0$. Hence $j_* R^{>0} \rho_* \cO_{\nye{Z}} = 0$, which implies that $R^{>0} \rho_* \cO_{\nye{Z}} = 0$. 
\end{proof}

The same idea in the previous proof also proves the following proposition, which we will need later.
\begin{proposition} \label{prop:propagate}
Let $\cV_{\bullet} \to \cF$ be a finite exact sequence of coherent sheaves on a proper scheme $X$. Let $k$ be a nonnegative integer. If for all $i \geq 0$, $H^{\geq i + k}(X, \cV_i) = 0$, then $H^{\geq k}(X, \cF) = 0$. \qed
\end{proposition}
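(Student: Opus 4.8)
The plan is to mimic the proof of Proposition \ref{prop:coh_cond} almost verbatim, replacing the Koszul complex there by the given finite exact sequence and keeping track of a single shift in cohomological degree. Write the exact sequence as $0 \to \cV_n \to \cdots \to \cV_1 \to \cV_0 \to \cF \to 0$. Set $\cK_0 = \cF$, and for $i \geq 1$ let $\cK_i = \ker(\cV_{i-1} \to \cV_{i-2})$ with the convention $\cV_{-1} = \cF$, so that $\cK_{n+1} = 0$ and we obtain short exact sequences
\[
0 \to \cK_{i+1} \to \cV_i \to \cK_i \to 0
\]
for every $i \geq 0$ (exactness of the original complex is exactly what makes $\cV_i \to \cK_i$ surjective with kernel $\cK_{i+1}$).

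The claim to prove is that $H^{\geq k+i}(X, \cK_i) = 0$ for all $i \geq 0$; taking $i = 0$ then yields $H^{\geq k}(X, \cF) = 0$, which is the proposition. I would prove the claim by descending induction on $i$. The base case is $\cK_{n+1} = 0$. For the inductive step, assume $H^{\geq k+i+1}(X, \cK_{i+1}) = 0$. The long exact cohomology sequence attached to $0 \to \cK_{i+1} \to \cV_i \to \cK_i \to 0$ contains, in each degree $j \geq k+i$, the piece
\[
H^{j}(X, \cV_i) \to H^{j}(X, \cK_i) \to H^{j+1}(X, \cK_{i+1}).
\]
The left-hand term vanishes by hypothesis (since $j \geq i + k$), and the right-hand term vanishes by the inductive hypothesis (since $j+1 \geq k+i+1$), so $H^{j}(X, \cK_i) = 0$ for all $j \geq k+i$. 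This completes the induction.

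Since this is purely formal homological algebra (properness is not even used, only the existence of long exact sequences), there is no real obstacle; the only care needed is to choose the indexing of the $\cK_i$ so the degree shift propagates correctly, which is handled above. As a sanity check, the same conclusion can be read off from the hyperdirect-image spectral sequence $E_1^{-i,q} = H^q(X,\cV_i) \Rightarrow H^{q-i}(X,\cF)$: every entry contributing to the abutment in degree $\geq k$ satisfies $q \geq i + k$ and hence vanishes.
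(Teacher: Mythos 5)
Your proof is correct and is exactly the argument the paper intends: the paper gives no separate proof of this proposition, remarking only that "the same idea in the previous proof" (the kernel decomposition and descending induction in Proposition \ref{prop:coh_cond}) applies, and your $\cK_i$ decomposition with the shifted induction is precisely that idea carried out with the correct degree bookkeeping. Your observation that properness is not actually needed is also accurate.
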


The map $q: F \to B$ is flat with fibers all isomorphic to $\Quott$, where $N = h^0 \cO(\vec{e})$, $\vec{e}$ is any splitting type showing up in $B$. By the theorem below and cohomology and base change, we know that $R^{> 0} \gamma_* \cO_{F} = 0$. So the condition of Proposition \ref{prop:coh_cond} is satisfied when $i = 0$.
\begin{theorem}(\cite{Stromme1987}, Theorem 2.1) $Q = \Quot^{r,d}(\cO_{\bP^1}^{\oplus N})$ is an irreducible, nonsingular, rational projective variety. In particular, $H^{>0}(Q, \cO_Q) = 0$.
\end{theorem}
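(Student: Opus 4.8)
The plan is to verify the asserted properties of $Q = \Quot^{r,d}(\cO_{\bP^1}^{\oplus N})$ one at a time. Projectivity is Grothendieck's general theorem on $\Quot$ schemes with fixed Hilbert polynomial over a projective base, so there is nothing new there. For smoothness, note that a $k$-point of $Q$ is a short exact sequence $0 \to \cK \to \cO^{\oplus N} \to \cQ \to 0$ on $\bP^1$ with $\cQ$ of rank $r$ and degree $d$, and that $\cK$, being a subsheaf of a locally free sheaf on the regular curve $\bP^1$, is itself locally free. The deformation theory of $\Quot$ of a fixed sheaf identifies the Zariski tangent space there with $\Hom_{\bP^1}(\cK,\cQ)$ and houses the obstructions in $\Ext^1_{\bP^1}(\cK,\cQ)$. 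I would show this $\Ext^1$ vanishes: applying $\Hom_{\bP^1}(-,\cQ)$ to the defining sequence produces an exact piece $\Ext^1(\cO^{\oplus N},\cQ)\to\Ext^1(\cK,\cQ)\to\Ext^2(\cQ,\cQ)$, where the last term is $0$ because $\bP^1$ is a curve, and $\Ext^1(\cO^{\oplus N},\cQ)=H^1(\bP^1,\cQ)^{\oplus N}=0$ since $\cQ$, being a quotient of $\cO^{\oplus N}$, is globally generated and globally generated sheaves on $\bP^1$ have vanishing $H^1$. Hence $\Ext^1(\cK,\cQ)=0$ and $Q$ is smooth.

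Because $\cK$ is locally free, $\Ext^i_{\bP^1}(\cK,\cQ)=H^i(\bP^1,\cHom(\cK,\cQ))$, so the local dimension of $Q$ at $[\cQ]$ is $\dim\Hom(\cK,\cQ)=\chi(\cHom(\cK,\cQ))$, which depends only on the numerical invariants $N,r,d$; a Riemann--Roch count gives $\dim Q = Nd+r(N-r)$ at every point. Thus $Q$ is smooth and equidimensional, hence a disjoint union of smooth irreducible projective varieties, and to obtain irreducibility and rationality it is enough to exhibit a dense open of $Q$ isomorphic to affine space. For this I would follow Str{\o}mme: the finitely many kernels $\cK$ that occur have fixed rank and degree, so for $m\gg 0$ each $\cK(m)$ is globally generated with vanishing $H^1$ and admits a linear resolution $0\to H^0(\cK(m-1))\otimes\cO(-1)\to H^0(\cK(m))\otimes\cO\to \cK(m)\to 0$ (Proposition 1.1 of \cite{Stromme1987}). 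Then $[\cK\hookrightarrow\cO^{\oplus N}]\mapsto\bigl(H^0(\cK(m-1))\subseteq H^0(\cO(m-1))^{\oplus N},\ H^0(\cK(m))\subseteq H^0(\cO(m))^{\oplus N}\bigr)$ defines a locally closed immersion of $Q$ into a product of two Grassmannians (it is injective on points because $\cK$ is recovered as the subsheaf of $\cO(m)^{\oplus N}$ generated by $H^0(\cK(m))$, and the $\Quot$ condition, that the generated subsheaf have the prescribed Hilbert polynomial, is locally closed). One then identifies the image as the incidence locus where $W\cdot H^0(\cO(1))\subseteq W'$ and the two subspaces have the expected dimensions, and shows by a direct analysis that this locus is irreducible and contains a big open cell isomorphic to affine space; alternatively, the $\mathbb{G}_m$-action on $\bP^1$ together with the diagonal torus of $GL_N$ gives a torus action on $Q$ with finitely many fixed points, and a Bia\l{}ynicki--Birula analysis yields an affine paving whose unique cell of dimension $\dim Q$ is dense by the equidimensionality already established. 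Either way $Q$ is irreducible and rational.

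Finally, $H^{>0}(Q,\cO_Q)=0$ is then automatic: over a field of characteristic zero, $\dim_k H^i(X,\cO_X)=h^{0,i}(X)=h^{i,0}(X)$ is a birational invariant among smooth projective varieties and vanishes for $i>0$ on $\bP^n$, hence it vanishes for the smooth projective rational variety $Q$ (and the affine paving gives the same conclusion in any characteristic). The step I expect to be the real obstacle is irreducibility and rationality: smoothness and the dimension count are short, but pinning down that $Q$ has a single irreducible component and that this component is rational is precisely Str{\o}mme's structural analysis — the explicit description of the image in the product of Grassmannians, or equivalently the Bia\l{}ynicki--Birula paving — and this is the substantive content of the theorem, and the place where the geometry of $\bP^1$ (as opposed to a higher genus curve, where the analogous $\Quot$ schemes can be reducible) is genuinely used.
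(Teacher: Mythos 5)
This statement is quoted verbatim from Str{\o}mme (\cite{Stromme1987}, Theorem 2.1); the paper gives no proof of it, so there is no internal argument to compare against, and your write-up is best judged as a reconstruction of Str{\o}mme's theorem. The parts you carry out in full are correct: projectivity is Grothendieck; the identification of the tangent space with $\Hom(\cK,\cQ)$ and obstructions with $\Ext^1(\cK,\cQ)$ is standard; the vanishing $\Ext^1(\cK,\cQ)=0$ via $\Ext^1(\cO^{\oplus N},\cQ)=H^1(\cQ)^{\oplus N}=0$ (a quotient of $\cO^{\oplus N}$ on $\bP^1$ is globally generated, hence its locally free part has nonnegative summands and its torsion part has no $H^1$) and $\Ext^2(\cQ,\cQ)=0$ on a curve is right; and the Riemann--Roch count $\chi(\cK^{\vee}\otimes\cQ)=Nd+r(N-r)$ is the correct dimension. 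The observation that smoothness plus equidimensionality reduces irreducibility to connectedness is also a genuine simplification worth keeping.

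The part you do not actually prove is the part you yourself flag: irreducibility and rationality. Both routes you name are viable, but each has an unexecuted key step. For the Bia{\l}ynicki--Birula route you must verify that the $\mathbb{G}_m\times(\mathbb{G}_m)^N$-fixed points are isolated (they are: a fixed subsheaf of $\cO^{\oplus N}$ is a direct sum of monomial ideal sheaves in the coordinate summands) and then count that exactly one cell attains dimension $Nd+r(N-r)$; without that count the argument does not close. For the Grassmannian route, the identification of the image of $\iota_{m-1}$ as the incidence locus $W\cdot H^0(\cO(1))\subseteq W'$ and the exhibition of a dense affine cell are precisely Str{\o}mme's Theorems 4.1 and 2.1 and are not one-liners. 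So I would accept this as a correct and well-organized outline whose complete pieces (smoothness, dimension, and the deduction $H^{>0}(Q,\cO_Q)=0$ from rationality via birational invariance of $h^{0,i}$ in characteristic zero) are sound, but which defers the substantive content of the cited theorem rather than supplying it.
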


Therefore, to show the $\splitt$ has rational singularities, by Proposition \ref{prop:coh_cond}, it remains to show that $R^{> n} \gamma_* \bigwedge^n \cR^{\vee} = 0$ for all $n > 0$, where 
\begin{align*}
\cR &= f_* \cHom(\nye{\gamma}^* \pi^* (\pi_* \cE(-1))(-1), \cQ) \\ 
&= (\gamma^* \pi_* \cE(-1))^{\vee} \otimes f_* \cQ(1).
\end{align*} 
In characteristic $0$, Cauchy's formula says that if $\cE, \cF$ are vector bundles, then 
\[
  \bigwedge^n(\cE \otimes \cF) \cong 
  \oplus_{\mu \vdash n} \bfS_{\mu^{\dagger}} \cE \otimes \bfS_{\mu} \cF,
\]
where $\mu^{\dagger}$ is the conjugate partition of $\mu$, and $\mu$ ranges over all partitions of $n$.
Using this identity, we have
\begin{align*}
\bigwedge^n \cR^{\vee} &= \bigwedge^n \left ( (\gamma^* \pi_* \cE(-1)) \otimes (f_* \cQ(1))^{\vee} \right )\\ 
&= \bigoplus_{\mu \vdash n} \bfS_{\mu^{\dagger}} (\gamma^* \pi_* \cE(-1)) \otimes \bfS_{\mu} ((f_* \cQ(1))^{\vee}) \\ 
&= \bigoplus_{\mu \vdash n}  \gamma^* \bfS_{\mu^{\dagger}} (\pi_* \cE(-1)) \otimes \bfS_{\mu} ((f_* \cQ(1))^{\vee}).
\end{align*}
So by the projection formula,
\begin{align*}
R^k \gamma_* \bigwedge^n \cR^{\vee} &=
\bigoplus_{\mu \vdash n}  \bfS_{\mu^{\dagger}} (\pi_* \cE(-1)) \otimes R^k \gamma_*(\bfS_{\mu} ((f_* \cQ(1))^{\vee})).
\end{align*}

Therefore, to prove Theorem \ref{thm:rational}, it suffices to show that when $k > n = |\mu|$, the higher pushforwards $R^k \gamma_*(\bfS_{\mu} ((f_* \cQ(1))^{\vee}))$ are zero. Since $\gamma$ is flat with fibers all isomorphic to $\Quott$, by cohomology and base change, it suffices to show vanishing for the cohomology of corresponding bundles on $\Quott$.


Let $d > 0$ and $N \geq r \geq 0$. Let $\cQ$ be the universal quotient on $\Quott \times \bP^1$, and let $p: \Quott \times \bP^1 \to \Quott$ be the projection to the first factor. Recall from the introduction that on $\Quott$, for all $m \geq -1$, we define tautological bundles $\cE_m = (p_* \cQ(m))^{\vee}$.

\begin{theorem} \label{thm:used_vanishing}
Let $\lambda$ be any nonempty partition. Then $H^{> |\lambda|}(\Quott, \bfS_{\lambda} \cE_1) = 0$.
\end{theorem}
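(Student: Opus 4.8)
The plan is to transfer the computation from $\Quott$ to a product of two Grassmannians via Str{\o}mme's embedding and then run Borel-Weil-Bott. Write $V = H^0(\bP^1, \cO(1))$, so $\dim V = 2$. Any quotient of $\cO^{\oplus N}$ on $\bP^1$ is fiberwise globally generated, hence fiberwise $0$-regular; in particular $R^1 p_* \cQ(-1) = 0$, and the relative Beilinson resolution (Str{\o}mme's sequence \eqref{eq:SES2} applied to $\cQ$ with $m = 0$) takes the form
\[
  0 \to p^* \cA(-1) \to p^* \cB \to \cQ \to 0 \quad \text{on } \Quott \times \bP^1,
\]
where $\cA = p_* \cQ(-1)$ has rank $d$ and $\cB = p_* \cQ$ has rank $d + r$. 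Twisting by $\cO(1)$ and applying $p_*$ gives $0 \to \cA \to \cB \otimes V \to p_* \cQ(1) \to 0$, so $\cE_1 = (p_* \cQ(1))^\vee$ is the kernel of a surjection $\cB^\vee \otimes V^\vee \to \cA^\vee$. Since $\cB$ is a rank-$(d+r)$ quotient of $\cO_{\Quott}^{\oplus N}$ and $p_* \cQ(1)$ is a rank-$(d+2r)$ quotient of $\cO_{\Quott}^{\oplus 2N}$, Str{\o}mme's theorem realizes $\Quott$ as a closed subscheme of $G \times G'$ with $G = \Gr(d+r, N)$ and $G' = \Gr(d+2r, 2N)$, cut out by a section of a bundle $\cW \cong \cS_{\cB}^\vee \otimes V^\vee \otimes \mathcal{C}$, where $\cS_{\cB} \subset \cO_G^{\oplus N}$ is the tautological subbundle on $G$ and $\mathcal{C}$ the tautological quotient on $G'$; under this embedding $\cE_1 = \mathcal{C}^\vee|_{\Quott}$, hence $\bfS_\lambda \cE_1 = \bfS_\lambda(\mathcal{C}^\vee)|_{\Quott}$. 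A dimension count, $\rk \cW = 2(N - d - r)(d + 2r) = \dim G + \dim G' - \dim \Quott$, shows the section is regular, so the Koszul complex $\wedge^\bullet \cW^\vee$ is a locally free resolution of $\cO_{\Quott}$ on $G \times G'$.

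Next I would tensor this resolution by $\bfS_\lambda(\mathcal{C}^\vee)$ and invoke Proposition \ref{prop:propagate}: it then suffices to prove $H^{> |\lambda| + j}(G \times G', \wedge^j \cW^\vee \otimes \bfS_\lambda \mathcal{C}^\vee) = 0$ for every $j \ge 0$. Since $\cW^\vee \cong \cS_{\cB} \boxtimes (V \otimes \mathcal{C}^\vee)$, Cauchy's formula gives $\wedge^j \cW^\vee = \bigoplus_{\mu \vdash j} \bfS_\mu \cS_{\cB} \boxtimes \bfS_{\mu^\dagger}(V \otimes \mathcal{C}^\vee)$; decomposing $\bfS_{\mu^\dagger}(V \otimes \mathcal{C}^\vee)$ into summands $\bfS_\nu V \otimes \bfS_\pi \mathcal{C}^\vee$ (with $|\pi| = j$, and $\nu$ of at most two rows as $\dim V = 2$) and then absorbing $\bfS_\lambda \mathcal{C}^\vee$ through the Littlewood-Richardson rule $\bfS_\pi \mathcal{C}^\vee \otimes \bfS_\lambda \mathcal{C}^\vee = \bigoplus_\delta c^{\delta}_{\pi \lambda} \bfS_\delta \mathcal{C}^\vee$, one reduces — using the K\"unneth formula and noting that $\bfS_\nu V$ is a fixed vector space — to the following purely Grassmannian claim: for all $\mu$ with $|\mu| = j$ and all $\delta$ with $|\delta| = j + |\lambda|$ arising above,
\[
  H^a(G, \bfS_\mu \cS_{\cB}) \ne 0 \text{ and } H^b(G', \bfS_\delta \mathcal{C}^\vee) \ne 0 \implies a + b \le |\lambda| + j.
\]

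Both $\bfS_\mu \cS_{\cB}$ and $\bfS_\delta \mathcal{C}^\vee$ are Schur functors of (duals of) tautological bundles, so by Borel-Weil-Bott each has cohomology concentrated in a single degree $\ell$ (or zero), where $\ell$ is the length of the Weyl group element that sorts the corresponding $\rho$-shifted weight into dominant position. The theorem therefore reduces to the numerical inequality $\ell_G(\mu) + \ell_{G'}(\delta) \le |\lambda| + j$, and this is the step I expect to be the real obstacle. The crude per-factor bounds ($\ell$ at most the size of the relevant partition on each side) only yield $a + b \le |\mu| + |\delta| = |\lambda| + 2j$, which is too weak; the argument has to exploit that $\mu$, $\pi$, $\delta$ are linked ($|\mu| = |\pi| = j$, and $\delta$ is obtained from $\pi$ and $\lambda$ by Littlewood-Richardson), that each box of $\mu$ contributes at most $1$ to $\ell_G(\mu)$, and that the part of $\delta$ coming from $\lambda$ contributes nothing to $\ell_{G'}(\delta)$ beyond what $\lambda$ alone would. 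Concretely I would run Bott's algorithm on the explicit weights — a suitable reversal of $\mu$ plus $\rho_G$ on $G$, and a suitable reversal of $-\delta$ plus $\rho_{G'}$ on $G'$ — and bound the number of transpositions, tracking repeated entries (which only help, since they force the relevant cohomology to vanish outright) and the interaction between the $\mathcal{C}^\vee$-weights coming from the Koszul term $\bfS_\pi \mathcal{C}^\vee$ and those from $\bfS_\lambda$. The base case $j = 0$, $\delta = \lambda$ is exactly $H^{> |\lambda|}(G', \bfS_\lambda \mathcal{C}^\vee) = 0$, which should follow directly from Bott's algorithm. Carrying this bookkeeping through in general is the combinatorial heart of the argument.
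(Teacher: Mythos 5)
Your overall strategy (Str{\o}mme embedding into a product of Grassmannians, Koszul resolution, Cauchy's formula, Borel--Weil--Bott) is the right family of ideas, but the specific embedding you use does not exist, and this is not a technicality. You want to realize $\Quott$ inside $\Gr(d+r,N)\times\Gr(d+2r,2N)$ via the quotients $p_*\cQ$ and $p_*\cQ(1)$. The map to the first factor requires $\cO^{\oplus N}=H^0(\cO^{\oplus N})\otimes\cO\to p_*\cQ$ to be surjective, i.e.\ $R^1p_*\cS=0$; but on the fiber the subsheaf can degenerate to $\cO(-d)\oplus\cO^{\oplus(N-r-1)}$, so $R^1p_*\cS\neq 0$ as soon as $d\geq 2$ (already for $N=1,r=0,d=2$ one has $h^0(\cQ_t)=2>1$). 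The same problem kills the second factor for $d\geq 3$, and Str{\o}mme's theorem only gives a well-defined map for $m\geq d-1$ and a closed embedding of the pair $(g_{m-1},g_m)$ for $m\geq d$. This is exactly why the paper does \emph{not} attack $\cE_1$ directly: it first proves the more general Theorem \ref{thm:general_vanishing} by descending induction, using the exact sequences $0\to\cE_{n+2}\to\cE_{n+1}^{\oplus 2}\to\cE_n\to 0$ and the associated Schur complexes to resolve $\bfS_\lambda\cE_1$ by bundles built from $\cE_{m-1},\cE_m$ with $m\gg 0$, and only then applies the embedding $\iota_{m-1}$ (Proposition \ref{prop:twoLambdas}). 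That reduction step is the missing idea in your proposal; without it the whole setup collapses for general $d$.

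Separately, you correctly identify the Borel--Weil--Bott inequality as the combinatorial heart but leave it unproved, and the mechanism you sketch (``each box of $\mu$ contributes at most $1$'') is not how the true argument goes — as you note, it only yields $|\lambda|+2j$. The paper's proof instead balances two estimates: Proposition \ref{prop:upperboundforcohdeg} bounds the cohomological degree above by $i_1(n_1-k_1)+jk_2+|\alpha|$ using the fine structure of Bott's algorithm (and the fact that $m$ is large enough to force $\alpha_{n_1-k_1}=0$), while Proposition \ref{prop:lowerboundforsumofparts} bounds $|\mu|+|\alpha|+|\beta|$ below by the same quantity using the dominance relations $\nu'\leq\alpha+\beta$ and $\mu^{\dagger}\leq\alpha\cup\beta$ forced by the Littlewood--Richardson rule. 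The point is that a large cohomological degree on either factor forces $\mu$ to contain a very wide row (from $\mu_{i_1}\geq n_1-k_1+i_1$) or $\nu$ to have a large leading block, which in turn forces $|\mu|$ itself to be large; no per-box counting argument of the kind you describe will produce this.
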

By our previous discussion, Theorem \ref{thm:used_vanishing} implies Theorem \ref{thm:rational}. It follows from the more general statement below.
\begin{theorem}[Restatement of Theorem \ref{thm:general_vanishing}]
Let $m \geq -1$ be an integer. Let $\lambda_{-1}, \lambda_0, \lambda_1, \dots, \lambda_m$ be partitions, at least one of which is nonempty. Let $D = |\lambda_{-1}| + \cdots + |\lambda_m|$. Then 
\[
	H^{> D}(\Quott, \otimes_{i = -1}^m \bfS_{\lambda_i} \cE_i) = 0.
\]
\end{theorem}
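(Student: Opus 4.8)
The plan is to reduce, in two stages, to a cohomology computation on a product of Grassmannians governed by Borel--Weil--Bott. For the first stage, note that the relative Beilinson resolution on $\bP^1$ gives an exact sequence
\[
  0 \to p^*\cE_{-1}^\vee(-1) \to p^*\cE_0^\vee \to \cQ \to 0
\]
on $\Quott\times\bP^1$; one checks this fiberwise --- using that every quotient $\cQ_t$ of $\cO^{\oplus N}$ has $h^1(\cQ_t(-1)) = 0$, so that $p_*\cQ(m)$ is a bundle for $m\ge -1$ and the kernel of the evaluation $p^*p_*\cQ\to\cQ$ is fiberwise $H^0(\cQ_t(-1))\otimes\cO(-1)$ --- and then globalizes via flatness and cohomology and base change. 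Twisting by $\cO(m)$ and pushing forward along $p$ yields, for every $m\ge 1$, a short exact sequence of bundles on $\Quott$
\[
  0 \to \cE_m \to \cE_0\otimes(\mathrm{Sym}^mW)^\vee \to \cE_{-1}\otimes(\mathrm{Sym}^{m-1}W)^\vee \to 0, \qquad W = H^0(\bP^1,\cO(1)).
\]
Applying the Schur functors $\bfS_{\lambda_i}$ to these two-term complexes and decomposing by the Littlewood--Richardson rule, each $\bfS_{\lambda_i}\cE_i$ with $i\ge 1$ acquires a finite resolution whose terms are direct sums of bundles $\bfS_\mu\cE_0\otimes\bfS_\nu\cE_{-1}$ (with the duals $\cE_0^\vee,\cE_{-1}^\vee$ possibly also entering) with $|\mu|+|\nu| = |\lambda_i|$. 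Tensoring these together with $\bfS_{\lambda_0}\cE_0\otimes\bfS_{\lambda_{-1}}\cE_{-1}$ and recombining the $\cE_0$- and $\cE_{-1}$-factors by Littlewood--Richardson, one gets a finite resolution of $\bigotimes_i\bfS_{\lambda_i}\cE_i$ by direct sums of bundles $\bfS_\mu\cE_0\otimes\bfS_\nu\cE_{-1}$ with $|\mu|+|\nu| = D$; by Proposition \ref{prop:propagate} it therefore suffices to prove
\[
  H^{>|\mu|+|\nu|}\bigl(\Quott,\ \bfS_\mu\cE_0\otimes\bfS_\nu\cE_{-1}\bigr) = 0
\]
for all partitions $\mu,\nu$, not both empty (this is the $m=0$ case, using $D\ge 1$).

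\textbf{Transporting to a product of Grassmannians.} By Str{\o}mme \cite{Stromme1987}, $\Quott$ is isomorphic to the zero scheme of a regular section of a globally generated vector bundle $\mathcal{B}$ on a product of two Grassmannians $G$, in such a way that $\cE_0$ and $\cE_{-1}$ (up to duals) are the pullbacks of the tautological bundles of the two factors, and $\mathcal{B}$ is itself assembled from those tautological bundles. Then the Koszul complex $\wedge^\bullet\mathcal{B}^\vee$ resolves $\iota_*\cO_{\Quott}$ on $G$, where $\iota\colon\Quott\hookrightarrow G$; tensoring it with the tautological extensions of $\bfS_\mu\cE_0\otimes\bfS_\nu\cE_{-1}$ to $G$ and invoking Proposition \ref{prop:propagate} once more, the displayed vanishing reduces to a statement purely on $G$: after expanding $\wedge^p\mathcal{B}^\vee$ with Cauchy's formula and the Littlewood--Richardson rule, every irreducible tautological bundle occurring in $\wedge^p\mathcal{B}^\vee\otimes\bfS_\mu\widetilde{\cE}_0\otimes\bfS_\nu\widetilde{\cE}_{-1}$ must have no cohomology in degrees $\ge p+|\mu|+|\nu|+1$.

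\textbf{Borel--Weil--Bott and the combinatorial core.} On a product of Grassmannians this splits by K\"unneth into the two factors, and on each factor Borel--Weil--Bott computes the cohomology of a Schur functor of the tautological bundle: one adds $\rho$ to the associated weight, and either finds a repeated entry --- whence all cohomology vanishes --- or sorts into dominant position and reads off the cohomological degree as the number of transpositions used. What remains is purely combinatorial: one must track the weights through Str{\o}mme's bundle $\mathcal{B}$, the Koszul differentials, Cauchy's formula, and all the Littlewood--Richardson expansions, and show that the total number of transpositions needed never exceeds $p+|\mu|+|\nu|$. I expect this uniform bound on the Bott ``length'' to be the main obstacle: there is no Borel--Weil--Bott statement available directly on $\Quott$, so everything must pass through this bookkeeping, and it is here that the hypotheses $d>0$ and $N\ge r$, together with the precise shape of $\mathcal{B}$, have to be exploited.
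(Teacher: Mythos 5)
Your overall architecture (reduce to Schur functors of two of the tautological bundles, pass to Str{\o}mme's Grassmannian embedding, resolve by the Koszul complex, expand with Cauchy and Littlewood--Richardson, finish with Borel--Weil--Bott) is the same as the paper's, but your reduction runs in the wrong direction, and this creates genuine gaps. In your sequence $0 \to \cE_m \to \cE_0\otimes(\operatorname{Sym}^mW)^\vee \to \cE_{-1}\otimes(\operatorname{Sym}^{m-1}W)^\vee \to 0$ the bundle you want to control, $\cE_m$, is the \emph{kernel}. The Schur complex of a map of bundles gives a left resolution of the Schur functor of its \emph{cokernel}; applied here (or to the dual sequence and then dualized) it only produces a right co-resolution $0\to\bfS_{\lambda}\cE_m\to\cV^0\to\cV^1\to\cdots$, and for a co-resolution the hypercohomology bookkeeping of Proposition \ref{prop:propagate} reverses: to conclude $H^{>|\lambda|}(\bfS_\lambda\cE_m)=0$ you would need $H^{>|\lambda|-p}(\cV^p)=0$, i.e.\ vanishing in strictly \emph{lower} degrees for later terms, which the statement you are reducing to does not supply. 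The paper avoids this by working with $0\to\cE_{n+2}\to\cE_{n+1}^{\oplus 2}\to\cE_n\to 0$, where the \emph{small}-index bundle is the cokernel, and inducting so as to push the smallest nonempty partition index \emph{up} toward $m$ rather than down toward $0$.

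Relatedly, the target of your reduction is not where Str{\o}mme's embedding lives: $\iota=(g_{m-1},g_m):\Quott\hookrightarrow G_{m-1}\times G_m$ is a closed embedding only for $m\geq d$, and since $d>0$ the tautological bundles restrict to $\cE_{m-1}$ and $\cE_m$ with $m\geq d\geq 1$, never to $\cE_{-1}$ and $\cE_0$; for small $m$ the maps $g_m$ are not even defined, since $p_*\cS(m)$ need not be a vector bundle ($h^0(\cS_t(m))$ jumps with the splitting type of the subsheaf). So the correct base case is $H^{>|\alpha|+|\beta|}(\bfS_\alpha\cE_{m-1}\otimes\bfS_\beta\cE_m)=0$ for $m$ \emph{large}, which is Proposition \ref{prop:twoLambdas}. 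Finally, you explicitly defer the combinatorial core --- the uniform bound of the Bott length by $p+|\mu|+|\nu|$ --- but this is the most substantive part of the argument (Propositions \ref{prop:upperboundforcohdeg} and \ref{prop:lowerboundforsumofparts}): it uses the facts that $\nu'$ is dominated by $\alpha+\beta$ and $\mu^{\dagger}$ by $\alpha\cup\beta$, together with the freedom to choose $m$ so large that $\alpha_{n_1-k_1}=0$; the last point is yet another reason the reduction must go to large $m$.
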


We will prove Theorem \ref{thm:general_vanishing} above by induction on the number of factors in the tensor product. For the base case, we will need the following proposition.
\begin{proposition} \label{prop:twoLambdas} Let $D > 0$ be a positive integer. Then for sufficiently large $m$, for any two partitions $\alpha, \beta$ such that $|\alpha| + |\beta| = D$, 
\[
	H^{> D}(\Quott, \bfS_{\alpha} \cE_{m-1} \otimes \bfS_{\beta} \cE_{m}) = 0.
\]
\end{proposition}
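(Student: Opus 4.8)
The plan is to reduce the cohomology vanishing on $\Quott$ to a cohomology vanishing statement on a product of Grassmannians, via Str\o mme's embedding, and then to exploit the fact that as $m$ grows the tautological bundles $\cE_{m-1}$ and $\cE_m$ become ``very positive'' in a way that kills higher cohomology. Concretely, recall (from \cite{Stromme1987}) that $\Quott$ embeds into a product $\Gr_1 \times \Gr_2$ of two Grassmannians built from the spaces $H^0(\cO_{\bP^1}^{\oplus N}(a))$ for suitable twists $a$, and that the tautological quotients $\cE_m$ on $\Quott$ are pulled back from (or are built out of) the tautological sub/quotient bundles on the two Grassmannian factors. The first step is therefore to make this identification precise: express $\cE_{m-1}$ and $\cE_m$, for $m$ large, in terms of the tautological bundles on $\Gr_1 \times \Gr_2$, keeping careful track of which Grassmannian factor contributes and with what rank and twist. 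For $m$ large the ranks of the relevant bundles on the Grassmannian factors grow, while the partitions $\alpha,\beta$ are fixed with $|\alpha|+|\beta|=D$, so the Schur functors $\bfS_\alpha, \bfS_\beta$ applied to these bundles remain ``small'' relative to the geometry.

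The second step is to resolve $\bfS_\alpha \cE_{m-1} \otimes \bfS_\beta \cE_m$, or rather $i_* (\bfS_\alpha \cE_{m-1} \otimes \bfS_\beta \cE_m)$ for the embedding $i: \Quott \hookrightarrow \Gr_1 \times \Gr_2$, by a complex of bundles on $\Gr_1 \times \Gr_2$ whose terms are of the form $\bfS_\mu(\text{tautological}) \otimes \bfS_\nu(\text{tautological})$ with $|\mu|+|\nu|$ bounded in terms of $D$ and the codimension of the embedding. This uses that $\Quott$ sits inside $\Gr_1 \times \Gr_2$ as the zero locus of a section of a vector bundle (again Str\o mme), so one has a Koszul resolution, and one then applies Littlewood--Richardson to expand the wedge powers of the dual of that bundle tensored with $\bfS_\alpha \cE_{m-1}\otimes \bfS_\beta \cE_m$ into irreducible Schur pieces. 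Combined with Proposition \ref{prop:propagate}, it then suffices to prove $H^{> D'}(\Gr_1 \times \Gr_2, \bfS_\mu \cW_1 \otimes \bfS_\nu \cW_2)=0$ for each such piece, where $\cW_j$ are the relevant tautological bundles on the factors and $D'$ is the appropriate shifted bound; by K\"unneth this splits as a product of cohomology statements on each Grassmannian factor.

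The third step is the Borel--Weil--Bott computation on a single Grassmannian. Here the point is that on $\Gr(k, V)$ the cohomology of $\bfS_\mu$ of the tautological quotient bundle (or its dual) is governed by the dominant-conjugacy behavior of a weight built from $\mu$ together with the rank data; as the rank of the ambient space (equivalently $m$) grows, the ``spacing'' between the blocks of the weight grows, so the Weyl-group combinatorics either leaves the weight already dominant (giving only $H^0$) or requires few reflections, and in all cases the cohomological degree in which $\bfS_\mu$ contributes is bounded by $|\mu|$ plus a controlled error. The main obstacle is precisely this uniform control: one must show that, \emph{after} the Koszul expansion has introduced the codimension-dependent shift, the total cohomological degree contributed on the product stays at most $D$, not $D$ plus the codimension. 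This should come out because the codimension of the Str\o mme embedding and the amount by which Borel--Weil--Bott can raise cohomological degree on each Grassmannian factor both scale the same way with $m$, and for $m$ sufficiently large the negative contributions dominate; pinning down this inequality is the combinatorial heart of the argument, and is where the hypothesis ``for sufficiently large $m$'' is genuinely used.
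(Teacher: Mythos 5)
Your overall strategy coincides with the paper's: pull back along Str\o mme's embedding $\iota=(g_{m-1},g_m):\Quott\hookrightarrow G_{m-1}\times G_m$, use the Koszul resolution of $\iota_*\cO_{\Quott}$ coming from the regular section of $\cA_1^{\vee}\boxtimes(\cB_2\oplus\cB_2)$, tensor with $\bfS_{\alpha}\cB_1^{\vee}\boxtimes\bfS_{\beta}\cB_2^{\vee}$ (whose restriction is $\bfS_{\alpha}\cE_{m-1}\otimes\bfS_{\beta}\cE_m$), expand via Cauchy and Littlewood--Richardson, and finish with Borel--Weil--Bott, K\"unneth and Proposition \ref{prop:propagate}. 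However, your third step is a genuine gap, and the heuristic you offer for closing it is not the mechanism that actually works. What must be proved is: for \emph{every} partition $\mu$, if the summand
\[
\cV_{\mu,\alpha,\beta}=(\bfS_{\mu}\cA_1\otimes\bfS_{\alpha}\cB_1^{\vee})\boxtimes(\bfS_{\mu^{\dagger}}(\cB_2^{\vee}\oplus\cB_2^{\vee})\otimes\bfS_{\beta}\cB_2^{\vee})
\]
of the $|\mu|$-th Koszul term has nonzero cohomology, then that cohomology lives in degree at most $|\mu|+|\alpha|+|\beta|$. This is not an asymptotic statement in $m$, and it is not true factor by factor that the Bott degree is ``$|\mu|$ plus a controlled error'': on the $G_m$ factor a piece $\bfS_{\nu}\cB_2^{\vee}$ with $\nu_j\geq k_2+j$ contributes degree $jk_2$, while on the $G_{m-1}$ factor $\bfS_{\mu}\cA_1\otimes\bfS_{\alpha}\cB_1^{\vee}$ can contribute degree up to about $i_1(n_1-k_1)+|\alpha|$, and both contributions must be absorbed simultaneously by the single budget $|\mu|+|\alpha|+|\beta|$, because the same $\mu$ feeds both factors (as $\bfS_{\mu}\cA_1$ on one and $\bfS_{\mu^{\dagger}}$ on the other).

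The paper closes this by a non-asymptotic, term-by-term comparison: the Bott nondegeneracy condition on $G_{m-1}$ forces $\mu_{i_1}\geq n_1-k_1+i_1$; the Bott condition on $G_m$, combined with the Littlewood--Richardson dominance relations (as in Marian--Oprea--Sam), forces $\mu_1^{\dagger}+\cdots+\mu_{2j}^{\dagger}\geq j(k_2+j)-|\beta|$; and these two lower bounds on complementary parts of $\mu$ combine to give $|\mu|+|\alpha|+|\beta|\geq jk_2+i_1(n_1-k_1)+(i_1-j)^2+|\alpha|$, which dominates the Bott degree. The hypothesis that $m$ is large enters only to guarantee $D<n_1-k_1$, hence $\alpha_{n_1-k_1}=0$, which is what makes the upper bound $jk_2+i_1(n_1-k_1)+|\alpha|$ on the cohomological degree valid; it is not used to make ``negative contributions dominate'' asymptotically. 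Without this comparison your argument is an outline rather than a proof, and, as you acknowledge, the deferred inequality is precisely the combinatorial heart of the proposition.
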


\begin{proof}[Proof of Theorem \ref{thm:general_vanishing} assuming Proposition \ref{prop:twoLambdas}]
On $\Quott \times \bP^1$, for any $n \in \Z$, we have an exact sequence
\[
	0 \to \cO(n) \to \cO(n+1)^{\oplus 2} \to \cO(n+2) \to 0,
\] 
by pulling back the same sequence from $\bP^1$. Tensoring this sequence with $\cQ$, since $\cO(n+2)$ is locally free, the sequence remains exact:
\[
	0 \to \cQ(n) \to \cQ(n+1)^{\oplus 2} \to \cQ(n+2) \to 0.
\] 
For $n \geq -1$, $R^1 p_* \cQ(n) = 0$, so the sequence remains exact after taking $p_*$, and each term is a vector bundle on $\Quott$:
\[
	0 \to p_*\cQ(n) \to p_*\cQ(n+1)^{\oplus 2} \to p_*\cQ(n+2) \to 0.
\]
Dualizing, we have
\begin{equation} \label{eq:propogate}
0 \to \cE_{n+2} \xrightarrow{\Phi_{n+1}} \cE_{n+1}^{\oplus 2} \to \cE_n \to 0.
\end{equation}
We will prove Theorem \ref{thm:general_vanishing} for a fixed $D > 0$ by induction on the index $i$ of the smallest nonempty partition $\lambda_i$. Without loss of generality, we may take $m$ to be sufficiently large. So Proposition \ref{prop:twoLambdas} takes care of the base cases $i = m$ and $i = m-1$. Now suppose that Theorem \ref{thm:general_vanishing} is true when the smallest nonempty partition is $\lambda_k$. There is a complex $(\bfS_{\lambda_{k-1}} \Phi_k)_{\bullet}$ formed on the map $\Phi_k: \cE_{k+1} \to \cE_k^{\oplus 2}$, called the Schur complex, whose $t$-th term is \[
	(\bfS_{\lambda_{k-1}} \Phi_k)_{t} = \oplus_{|\alpha| = t, c_{\alpha, \beta}^{\lambda_{k-1}} \neq 0} \bfS_{\alpha^{\dagger}} \cE_{k+1} \otimes \bfS_{\beta} \cE_k^{\oplus 2}.
\]
Corollary V.1.15 of \cite{KaanBuchsbaumWyman} tells us that when the cokernel of $\Phi_k$ is locally free, this complex is a resolution of the Schur functor of the cokernel, which is the vector bundle $\bfS_{\lambda_{k-1}} \cE_{k-1}$ in our case. 

Tensoring this resolution with $\otimes_{i = k}^m \bfS_{\lambda_i} \cE_i$ and applying Littlewood-Richardson rules, we find that the vector bundle $\otimes_{i = k-1}^m \bfS_{\lambda_i} \cE_i$ has a resolution whose $t$-th term is the direct sum of bundles of the form $\otimes_{i = k+2}^m \bfS_{\lambda_i} \cE_i \otimes \bfS_{\lambda_{k+1}'} \cE_{k+1} \otimes \bfS_{\lambda_k'} \cE_k$, where $|\lambda_{k+1}'| = |\lambda_{k+1}| + t$, and $|\lambda_{k}'| = |\lambda_k| + |\lambda_{k-1}| - t$. In particular, the sum of the sizes of the partitions involved is $D$ for all bundles that show up in the resolution. The inductive hypothesis applies to show that all terms in the resolution have no cohomology in degree greater than $D$, which implies that $H^{>D}(\otimes_{i = k-1}^m \bfS_{\lambda_i} \cE_i) = 0$.
\end{proof}

\subsection{Proof of Proposition \ref{prop:twoLambdas}}
For every integer $m \geq d-1$, we have a short exact sequence 
\[
  0 \to p_* \cS(m) \to p_* \cO(m)^{\oplus N} \to p_* \cQ(m) \to 0
\]
of vector bundles on $\Quott$, inducing a map $g_m: \Quott \to G_m$, where $G_m = \Gr(\rk p_* \cS(m), N(m+1))$. Str{\o}mme proves that for $m \geq d$, the map $\iota = \iota_{m-1} = (g_{m-1}, g_m): \Quott \to G_{m-1} \times G_m$ is a closed embedding (Theorem 4.1, \cite{Stromme1987}). Let 
\begin{align}
  & 0 \to \cA_1 \to \cO_{G_{m-1}}^{\oplus Nm} \to \cB_1 \to 0 \\ 
  & 0 \to \cA_2 \to \cO_{G_{m}}^{\oplus N(m+1)} \to \cB_2 \to 0
\end{align}
be the universal sequences on $G_{m-1}$ and $G_m$, and let $W = H^0(\bP^1, \cO(1))$. For $i = 1, 2$, let $k_i = \rk \cA_i, n_1 = Nm, n_2 = N(m+1)$, so that $G_{m-1} \times G_m = \Gr(k_1, n_1) \times \Gr(k_2,n_2)$. Str{\o}mme also proves that $\Quott$ is the zero locus of a global section of the vector bundle $\cA_1^{\vee} \boxtimes (\cB_2 \otimes W) \cong \cA_1^{\vee} \boxtimes (\cB_2 \oplus \cB_2)$, whose rank is equal to the codimension of $\Quott$ in $G_{m-1} \times G_m$, exhibiting it as a local complete intersection (Theorem 4.1, \cite{Stromme1987}).

As a result, $\iota_* \cO_{\Quott}$ admits a Koszul resolution as a $\cO_{G_{m-1} \times G_m}$-module: 
\[
\wedge^{\bullet} \left ( 
\cA_1 \boxtimes (\cB_2 \oplus \cB_2)^{\vee} 
\right )
\to \iota_* \cO_{\Quott}.
\]
By definition of $g_{m-1}$ and $g_m$, we know that 
$\iota^* (\bfS_{\alpha} \cB_1^{\vee} \boxtimes \bfS_{\beta}\cB_2^{\vee}) 
= \bfS_{\alpha} \cE_{m-1} \otimes \bfS_{\beta} \cE_{m}$. 
So tensoring the Koszul resolution with $\bfS_{\alpha} \cB_1^{\vee} \boxtimes \bfS_{\beta} \cB_2^{\vee}$, we get a resolution of $\iota_*(\bfS_{\alpha} \cE_{m-1} \otimes \bfS_{\beta} \cE_{m})$:
\[
  \wedge^{\bullet} \left ( 
\cA_1 \boxtimes (\cB_2 \oplus \cB_2)^{\vee} 
\right ) \otimes 
(\bfS_{\alpha} \cB_1^{\vee} \boxtimes \bfS_{\beta} \cB_2^{\vee})
\to \iota_* (\bfS_{\alpha} \cE_{m-1} \otimes \bfS_{\beta} \cE_{m}).
\]
For a given $D = |\alpha| + |\beta|$, we will choose $m$ to be sufficiently large so that $D < n_1 - k_1$. In particular, any partition $\alpha$ with $|\alpha| \leq D$ must have $\alpha_{n_1-k_1} = 0$. Let $\mu = (\mu_1,\dots, \mu_{k_1}), \alpha = (\alpha_1, \dots, \alpha_{n_1-k_1})$ and $\beta$ be partitions. 
Let 
\[
	\cV_{\mu, \alpha, \beta} = (\bfS_{\mu}\cA_1 \otimes \bfS_{\alpha}\cB_1^{\vee}) 
	\boxtimes 
	(\bfS_{\mu^{\dagger}}(\cB_2^{\vee} \oplus \cB_2^{\vee}) \otimes \bfS_{\beta}\cB_2^{\vee}).
\]
By Cauchy's formula, 
\[
	\wedge^k \left ( 
\cA_1 \boxtimes (\cB_2 \oplus \cB_2)^{\vee} 
\right ) \otimes 
(\bfS_{\alpha} \cB_1^{\vee} \boxtimes \bfS_{\beta} \cB_2^{\vee}) = 
\oplus_{\mu \vdash k} \cV_{\mu, \alpha, \beta}.
\]

By Proposition \ref{prop:propagate}, the following result implies Proposition \ref{prop:twoLambdas}.
\begin{proposition} \label{prop:vanishingGrassmannian} 
For all partitions $\alpha, \beta$ with $|\alpha|+|\beta| > 0$ and $\alpha_{n_1-k_1} = 0$, and any partition $\mu$,
\[H^{> |\mu|+|\alpha|+|\beta|}(\cV_{\mu,\alpha,\beta}) = 0.\]
\end{proposition}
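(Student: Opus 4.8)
The plan is to split the cohomology of the external tensor product via the Künneth formula, compute on each Grassmannian factor using the Borel--Weil--Bott theorem, and then reduce everything to a combinatorial inequality about the lengths of the Weyl group elements produced by Bott's algorithm.

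Write $\cV_{\mu,\alpha,\beta} = \mathcal{M}_1 \boxtimes \mathcal{M}_2$, where $\mathcal{M}_1 = \bfS_\mu\cA_1 \otimes \bfS_\alpha\cB_1^\vee$ lives on $G_{m-1}$ and $\mathcal{M}_2 = \bfS_{\mu^\dagger}(\cB_2^\vee \oplus \cB_2^\vee) \otimes \bfS_\beta\cB_2^\vee$ lives on $G_m$. The Künneth formula gives $H^i(\cV_{\mu,\alpha,\beta}) = \bigoplus_{a+b=i} H^a(G_{m-1},\mathcal{M}_1) \otimes H^b(G_m,\mathcal{M}_2)$. Now $\mathcal{M}_1$ is an irreducible equivariant vector bundle on the Grassmannian $G_{m-1}$ --- a Schur functor of the tautological subbundle tensored with a Schur functor of the dual of the tautological quotient --- so by Borel--Weil--Bott its cohomology is either zero (and then we are done) or concentrated in a single degree $a_0$. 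Applying Cauchy's formula and the Littlewood--Richardson rule decomposes $\mathcal{M}_2$ into a direct sum of irreducible equivariant bundles $\bfS_\sigma\cB_2^\vee$ over partitions $\sigma$ with $|\sigma| = |\mu^\dagger| + |\beta| = |\mu| + |\beta|$, and each of these likewise has cohomology either zero or concentrated in a single degree $b(\sigma)$. Hence it suffices to show, for every partition $\sigma$ with $|\sigma| = |\mu| + |\beta|$ arising this way for which $\mathcal{M}_1$ and $\bfS_\sigma\cB_2^\vee$ are both non-acyclic,
\[
  a_0 + b(\sigma) \;\leq\; |\mu| + |\alpha| + |\beta| \;=\; |\sigma| + |\alpha|.
\]

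To evaluate $a_0$ and $b(\sigma)$ I would translate each bundle into a weight for the relevant Grassmannian, add $\rho$, and run Bott's algorithm: the cohomological degree is the number of pairs of entries of $w+\rho$ that are out of decreasing order, and a repeated entry means the bundle is acyclic. Because $\rk\cA_2 \gg 0$ (we have taken $m$ large) and the hypothesis $\alpha_{n_1-k_1} = 0$ inserts a zero between the ``$\alpha$-block'' and the ``$\mu$-block'' of the weight of $\mathcal{M}_1$, these inversion counts become explicit: $b(\sigma)$ works out to the number of boxes of $\sigma$ lying strictly to one side of the main diagonal, so that $b(\sigma) \leq |\sigma|$ with deficit at least the number of nonzero parts of $\sigma$, while $a_0$ is a comparable count of inversions involving the parts of $\mu$ and $\alpha$, bounded above by $|\alpha|$ plus a correction term measuring how far $\mu$ protrudes past the staircase. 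The point is that in exactly the cases where the naive sum $a_0 + b(\sigma)$ would exceed $|\sigma| + |\alpha|$, the weight of $\bfS_\sigma\cB_2^\vee$ (or of $\mathcal{M}_1$) acquires a repeated entry after adding $\rho$ and the corresponding bundle is acyclic, contributing nothing; this is the mechanism that rescues the bound, and it is vacuous when $\sigma$ is small relative to $m$ but essential when $|\mu|$ --- and hence $\sigma$ --- has size comparable to $\dim G_m$.

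The main obstacle is the combinatorial inequality itself: showing that whenever both Bott weights are regular (no repeated entries), the two inversion counts add to at most $|\sigma| + |\alpha|$. This requires tracking, through the Cauchy and Littlewood--Richardson expansions, the inequalities relating the parts of $\sigma$ to the parts of $\mu^\dagger$ (equivalently to the columns of $\mu$) and to $\beta$, and feeding them into the explicit inversion formulas for $a_0$ and $b(\sigma)$ from Bott's algorithm; the hypothesis $\alpha_{n_1-k_1} = 0$ and the regularity of the weights are used precisely to separate the ``above-diagonal'' contribution (which inflates $b(\sigma)$) from the ``below-diagonal'' contribution (which inflates $a_0$) and to show they cannot both be large. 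I expect to organize the verification as a case analysis on the shape of $\sigma$ relative to the staircase, or by induction on $|\mu|$, and this accounting is where essentially all the work lies.
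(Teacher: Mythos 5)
Your setup is the same as the paper's: K\"unneth, Borel--Weil--Bott on each Grassmannian factor, Cauchy plus Littlewood--Richardson to decompose the second factor into bundles $\bfS_\sigma\cB_2^\vee$ with $|\sigma|=|\mu|+|\beta|$, and a reduction to the inequality $a_0+b(\sigma)\le|\sigma|+|\alpha|$ for the two Bott degrees. That reduction is correct. But the inequality itself --- which is the entire mathematical content of the proposition --- is not proved, and you say as much (``this accounting is where essentially all the work lies''). The specific difficulty your sketch does not resolve is a double-counting problem: non-acyclicity of $\bfS_\sigma\cB_2^\vee$ forces $\sigma_1+\cdots+\sigma_j\ge j(k_2+j)$ and hence $b(\sigma)=jk_2\le|\sigma|=|\mu|+|\beta|$, while non-acyclicity of $\bfS_\mu\cA_1\otimes\bfS_\alpha\cB_1^\vee$ forces $\mu_{i_1}\ge n_1-k_1+i_1$ and hence $a_0\le i_1(n_1-k_1)+|\alpha|\le|\mu|+|\alpha|$. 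Adding these separate bounds only yields $a_0+b(\sigma)\le 2|\mu|+|\alpha|+|\beta|$, which is not enough; your remark that the bad cases are ``rescued'' by a repeated entry in the $\rho$-shifted weight is not a mechanism, since the problem arises precisely when both weights are regular.

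What is actually needed is a joint estimate showing that the two large contributions to $|\mu|$ cannot overlap. The paper does this by tracing $\sigma$ back through the Littlewood--Richardson expansion to get the dominance bound $\mu_1^\dagger+\cdots+\mu_{2j}^\dagger\ge\sigma_1+\cdots+\sigma_j-|\beta|\ge j(k_2+j)-|\beta|$ (so the first $2j$ \emph{columns} of $\mu$ already pay for $b(\sigma)$), and separately noting that the first $i_1$ \emph{rows} of $\mu$ contribute at least $i_1(n_1-k_1+i_1-2j)$ boxes outside those columns (paying for $a_0$); summing gives $|\mu|\ge jk_2+i_1(n_1-k_1)+(i_1-j)^2-|\beta|$, and the cross term $(i_1-j)^2\ge0$ closes the gap. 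It also needs the refined upper bound $a_0\le i_1(n_1-k_1)+|\alpha|$, which follows from telescoping the increments $i_{\ell+1}-i_\ell$ against the differences $\alpha_{n_1-k_1-\ell}-\alpha_{n_1-k_1-\ell+1}$ (this is where $\alpha_{n_1-k_1}=0$ enters). None of this appears in your proposal, so as it stands the proof is incomplete at its central step.
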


We will first deal with the case where $\mu \neq \emptyset$, which is harder. Suppose that there exist partitions $\gamma_1, \gamma_2, \nu', \nu = (\nu_1, \dots, \nu_{n_2-k_2})$ such that $c_{\gamma_1, \gamma_2}^{\mu^{\dagger}} c_{\gamma_1,\gamma_2}^{\nu'} c_{\nu', \beta}^{\nu} \neq 0$. In other words, 
\[\cW_{\mu, \alpha, \nu} = 
(\bfS_{\mu} \cA_1\otimes \bfS_{\alpha}\cB_1^{\vee})
\boxtimes 
\bfS_{\nu}\cB_2^{\vee}
\]
arises as a summand of $\cV_{\mu, \alpha, \beta}$. By Borel-Weil-Bott  and K\"unneth, if $\cW_{\mu, \alpha, \nu}$ has cohomology, the vector bundles $\bfS_{\mu}\cA_1 \otimes \bfS_{\alpha}\cB_1^{\vee}$ and $\bfS_{\nu}\cB_2^{\vee}$ must both have cohomology in some unique degrees $D_1, D_2$. Then $\cW_{\mu, \alpha, \nu}$ has cohomology in degree $D = D_1 + D_2$. When this happens, Borel-Weil-Bott imposes some combinatorial conditions on the partitions involved. We would like to show that $D \leq |\mu| + |\alpha| + |\beta|$.

Before proceeding further with the proof, note the following example which shows that this bound is sharp in general.
\begin{example}
The Quot scheme $\Quot^{1,3}(\cO^{\oplus 3})$ admits an embedding $\iota_2: \Quot^{1,3}(\cO^{\oplus 3}) \hookrightarrow \Gr(3,9) \times \Gr(5,12)$. We can take $\mu = (7, 2), \alpha = (1), \beta = (2)$. Then $H^{12}(\cV_{\mu,\alpha,\beta}) \neq 0$. Indeed, on the first factor, cohomology degree equals $7$. On the second factor, nonzero cohomology arises from the partition $\nu = (6, 1^5)$, which has cohomology in degree $5$. It comes from combining $\nu' = (4, 1^5)$ and $\beta = (2)$ using the Littlewood-Richardson rule. The possible pairs $\gamma_1, \gamma_2$ such that $c_{\gamma_1, \gamma_2}^{\mu^{\dagger}} c_{\gamma_1, \gamma_2}^{\nu'} \neq 0$ include 
  $(\gamma_1 = (2), \gamma_2 = (2, 1^5)),
    (\gamma_1 = (2, 1), \gamma_2 = (2, 1^4)),
    (\gamma_1 = (2, 1, 1), \gamma_2 = (2, 1^3))$, as well as the three tuples obtained from the above by switching $\gamma_1$ and $\gamma_2$. For all six pairs, $c_{\gamma_1, \gamma_2}^{\mu^{\dagger}} c_{\gamma_1,\gamma_2}^{\nu'} c_{\nu', \beta}^{\nu} = 1$.

Due to a lack of understanding of the map on cohomology induced by the Koszul resolution, we are unable to check whether this nonvanishing group implies that $H^{3}(\Quot, \cE_2 \otimes \text{Sym}^2 \cE_3) \neq 0$.
\end{example}

Now let us delve into the combinatorics further. The vector bundle $\bfS_{\nu}\cB_2^{\vee}$ has cohomology if and only if the following tuple obtained by pointwise addition
\[(0^{k_2}, \nu) + (n_2-1, n_2-2, \dots, 1, 0)\]
has no repeated entries. Borel-Weil-Bott says that if this tuple has no repetitions, then $\bfS_{\nu}\cB_2^{\vee}$ has cohomology in degree equal to the length of the permutation that sorts this vector in descending order, and no cohomology in other degrees. Since $|\mu| > 0$, $|\nu| = |\mu| + |\beta| > 0$. So the condition that the vector has no repetition is equivalent to the existence of some $1 \leq j \leq n_2-k_2$ such that $\nu_j \geq k_2 + j$ and $\nu_{j+1} \leq j$. When such $j$ exists, it is unique and $\bfS_{\nu}\cB_2^{\vee}$ has cohomology in degree $D_2 = jk_2$. 

The cohomology of the first vector bundle $\bfS_{\mu}\cA_1 \otimes \bfS_{\alpha}\cB_1^{\vee}$ is a bit more complicated to describe. Define 
\begin{align*}
f_i &= n_1-k_1+i-1-\mu_i, \\ 
g_j &= n_1-k_1-j+\alpha_j.
\end{align*} 
Let $- \mu = (-\mu_{k_1}, \dots, -\mu_2,-\mu_1)$.
Then Borel-Weil-Bott says that $\bfS_{\mu} \cA_1 \otimes \bfS_{\alpha}\cB_1^{\vee}$ has cohomology if and only if the vector 
\[
(-\mu, \alpha) + (n_1-1, n_1-2, \dots, 1, 0) = 
(f_{k_1},f_{k_1-1},\dots,f_1,g_1,g_2, \dots,g_{n_1-k_1})\] 
does not have repeated entries. In this case, $\bfS_{\mu} \cA_1 \otimes \bfS_{\alpha}\cB_1^{\vee}$ has cohomology  in degree equal to the length of the permutation that sorts this vector in strictly descending order. 

Define indices $0 \leq i_1 \leq i_2 \leq \cdots \leq i_{n_1-k_1} \leq k_1$, where $i_\ell$ is the largest index such that $f_{i_\ell} < g_{n_1-k_1-\ell+1}$ for each $1 \leq \ell \leq n_1-k_1$. When there doesn't exist such an index, we set $i_{\ell} = 0$.
In terms of $i_1, \dots, i_{n_1-k_1}$, the unique degree in which $\bfS_{\mu} \cA_1 \otimes \bfS_{\alpha}\cB_1^{\vee}$ has nonzero cohomology is 
\[
	D_1 = \sum_{\ell = 1}^{n_1-k_1}(i_\ell - i_{\ell-1})(n_1-k_1-\ell+1),
\]
where we let $i_0 = 0$.
For $1 \leq \ell \leq n_1-k_1$, the inequality $f_{i_{\ell}} < g_{n_1-k_1-\ell+1}$ is equivalent to  
\begin{align}
\mu_{i_{\ell}} - (n_1-k_1-\ell+1) \geq i_{\ell} - \alpha_{n_1-k_1-\ell+1}. \label{eq:ineq2}
\end{align}
Setting $\ell = 1$, we note that \eqref{eq:ineq2} says that 
\begin{equation} \label{eq:mui1}
	\mu_{i_1} \geq n_1-k_1 +i_1-\alpha_{n_1-k_1} = n_1-k_1 +i_1.
\end{equation}
When $i_{\ell} < i_{\ell+1}$, we also have $f_{i_{\ell}+1} > g_{n_1-k_1-\ell+1}$, which is equivalent to 
\begin{align}
i_{\ell}- \alpha_{n_1-k_1-\ell+1} > \mu_{i_{\ell}+1} - (n_1-k_1-\ell+1) \label{eq:ineq1}.
\end{align}
Replacing $\ell$ with $\ell+1$ in \eqref{eq:ineq2}, we find 
\[
	\mu_{i_{\ell+1}} - (n_1-k_1-\ell) \geq i_{\ell+1}-\alpha_{n_1-k_1-\ell}.
\]
Combined with \eqref{eq:ineq1}, we obtain
\[
	i_{\ell} - \alpha_{n_1-k_1-\ell+1} \geq \mu_{i_{\ell}+1}-(n_1-k_1-\ell) \geq \mu_{i_{\ell+1}} - (n_1-k_1-\ell) \geq i_{\ell+1}-\alpha_{n_1-k_1-\ell}.
\]
So for $1 \leq \ell < n_1-k_1$, whenever $i_{\ell} < i_{\ell+1}$,
\begin{equation} \label{eq:diff-of-i-indices}
	 \alpha_{n_1-k_1-\ell} - \alpha_{n_1-k_1-\ell+1} \geq i_{\ell+1}-i_{\ell}.
\end{equation}
If $i_{\ell} = i_{\ell+1}$, then this inequality follows from $\alpha$ being a partition. So we've shown that \eqref{eq:diff-of-i-indices} holds unconditionally for $1 \leq \ell < n_1-k_1$.

To summarize,
\begin{proposition} \label{prop:upperboundforcohdeg}
If $H^{\bullet}(\cW_{\mu,\alpha,\nu}) \neq 0$, then there exist $1 \leq j \leq n_2-k_2$, and $0 = i_0 \leq i_1 \leq \cdots \leq i_{n_1-k_1} \leq k_1$ such that the conditions \eqref{eq:mui1} and \eqref{eq:diff-of-i-indices} are satisfied, and the unique degree in which the cohomology of $\cW_{\mu,\alpha,\nu}$ is nonzero is $D = jk_2 + \sum_{\ell = 1}^{n_1-k_1}(i_\ell - i_{\ell-1})(n_1-k_1-\ell+1)$. Moreover,
\[
	D \leq i_1(n_1-k_1)+jk_2 + |\alpha|.
\]
\end{proposition}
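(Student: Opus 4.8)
The plan is to observe that the first half of the statement merely assembles the Borel--Weil--Bott analysis of the two factors of $\cW_{\mu,\alpha,\nu}$ already carried out above, so that the real content is the displayed inequality, and that this reduces to an entirely elementary estimate on the indices $i_1,\dots,i_{n_1-k_1}$. I would begin with the K\"unneth formula: since $\cW_{\mu,\alpha,\nu}$ is the external tensor product of $\bfS_\mu\cA_1\otimes\bfS_\alpha\cB_1^\vee$ on $G_{m-1}$ with $\bfS_\nu\cB_2^\vee$ on $G_m$, nonvanishing of the cohomology of $\cW_{\mu,\alpha,\nu}$ forces nonvanishing of the cohomology of both factors, and concentrates it in the single degree $D=D_1+D_2$. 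The discussion preceding the proposition then supplies the index $j$ with $\nu_j\ge k_2+j$, $\nu_{j+1}\le j$ and $D_2=jk_2$, together with the indices $0=i_0\le i_1\le\cdots\le i_{n_1-k_1}\le k_1$, the formula $D_1=\sum_{\ell=1}^{n_1-k_1}(i_\ell-i_{\ell-1})(n_1-k_1-\ell+1)$, and the inequalities \eqref{eq:mui1} and \eqref{eq:diff-of-i-indices}. This settles everything except the bound on $D$.

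For the bound, I would first rewrite $D_1$ by summation by parts. The weights $n_1-k_1-\ell+1$ decrease by exactly $1$ at each step, the last weight equals $1$, and $i_0=0$; hence each $i_\ell$ contributes with coefficient $1$, and $D_1=\sum_{\ell=1}^{n_1-k_1}i_\ell$. Next, summing \eqref{eq:diff-of-i-indices} over the indices $1,\dots,\ell-1$ telescopes to $i_\ell-i_1\le\alpha_{n_1-k_1-\ell+1}-\alpha_{n_1-k_1}=\alpha_{n_1-k_1-\ell+1}$ for every $\ell$, where the last step uses $\alpha_{n_1-k_1}=0$ (only $\alpha_{n_1-k_1}\ge 0$ is actually needed, and the $\ell=1$ case is vacuous). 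Summing this over $\ell=1,\dots,n_1-k_1$ and reindexing the right-hand side as $\sum_{s=1}^{n_1-k_1}\alpha_s=|\alpha|$ yields $D_1-(n_1-k_1)i_1\le|\alpha|$, i.e. $D_1\le i_1(n_1-k_1)+|\alpha|$. Adding $D_2=jk_2$ gives $D\le i_1(n_1-k_1)+jk_2+|\alpha|$, as claimed.

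The argument is wholly elementary, so I do not anticipate a genuine obstacle here; the only places that demand care are the bookkeeping in the summation by parts (pinning down that every $i_\ell$ has coefficient exactly $1$) and the reindexing in the telescoping sum, together with noting that the hypothesis $\alpha_{n_1-k_1}=0$ of Proposition~\ref{prop:vanishingGrassmannian} is precisely what allows the tail term $\alpha_{n_1-k_1}$ to be dropped. The more substantial work---extracting the combinatorial conditions \eqref{eq:mui1} and \eqref{eq:diff-of-i-indices} from Borel--Weil--Bott---has already been done in the lead-up to the statement, so this proof is essentially the final accounting step.
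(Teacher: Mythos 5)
Your proof is correct and follows essentially the same route as the paper: the first half is the same assembly of the Borel--Weil--Bott data, and the bound on $D$ is obtained from \eqref{eq:diff-of-i-indices} together with $\alpha_{n_1-k_1}=0$, with only a cosmetic difference in bookkeeping (you first collapse $D_1$ to $\sum_\ell i_\ell$ by Abel summation and then telescope, whereas the paper substitutes \eqref{eq:diff-of-i-indices} term by term into the weighted sum and evaluates). Your side remark that $\alpha_{n_1-k_1}\ge 0$ would suffice for the inequality is also accurate.
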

\begin{proof}
We've already proven the proposition except for the last inequality. Recall that $m$ was chosen so that $\alpha_{n_1-k_1} = 0$. Now using Equation \eqref{eq:diff-of-i-indices},
\begin{align*}
	D &= jk_2 + \sum_{\ell = 1}^{n_1-k_1}(i_\ell - i_{\ell-1})(n_1-k_1-\ell+1) \\ 
	&\leq jk_2 + i_1(n_1-k_1) + \sum_{\ell = 2}^{n_1-k_1}(\alpha_{n_1-k_1-\ell+1} - \alpha_{n_1-k_1-\ell+2})(n_1-k_1-\ell+1) \\ 
	&= jk_2 + i_1(n_1-k_1) + |\alpha| - \alpha_{n_1-k_1}(n_1-k_1) \\ 
	&= jk_2 + i_1(n_1-k_1) + |\alpha|.
\end{align*}
\end{proof}

\begin{proposition} \label{prop:lowerboundforsumofparts}Suppose that after applying the Littlewood-Richardson decompositions, $\cV_{\mu, \alpha, \beta}$ contains $\cW_{\mu, \alpha, \nu}$ as a summand, i.e. there exist partitions $\gamma_1, \gamma_2, \nu', \nu$ such that $c_{\gamma_1, \gamma_2}^{\mu^{\dagger}} c_{\gamma_1,\gamma_2}^{\nu'} c_{\nu', \beta}^{\nu} \neq 0$. Suppose $\cW_{\mu, \alpha, \nu}$ has cohomology with associated indices $j, i_\ell$ given by the previous proposition. Then
\[
	|\mu| + |\alpha| + |\beta| \geq jk_2 + i_1(n_1-k_1) + (i_1-j)^2 + |\alpha|.
\]
\end{proposition}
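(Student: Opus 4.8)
Since $|\mu| = |\mu^\dagger| = |\gamma_1| + |\gamma_2|$ and $|\nu| = |\mu| + |\beta|$, the asserted inequality is equivalent to the lower bound $|\nu| \ge jk_2 + i_1(n_1-k_1) + (i_1-j)^2$. Write $R_1 = n_1 - k_1$ and $R_2 = n_2 - k_2$, the ranks of $\cB_1^\vee$ and $\cB_2^\vee$. The plan has three steps: (i) convert all hypotheses into partition inequalities; (ii) dispose of the numerically easy regime; (iii) a case analysis in the remaining regime. For (i): the assumption that $\cW_{\mu,\alpha,\nu}$ has cohomology with the index $i_1$ of Proposition~\ref{prop:upperboundforcohdeg} gives, through \eqref{eq:mui1}, that $\mu_{i_1} \ge R_1 + i_1$; since $\mu$ is a partition, $\mu^\dagger$ then contains the $i_1 \times (R_1+i_1)$ rectangle, so $\mu^\dagger_t \ge i_1$ for all $t \le R_1 + i_1$ and $|\mu| \ge i_1(R_1+i_1)$. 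The description of the cohomology of $\bfS_\nu \cB_2^\vee$ yields $\nu_j \ge k_2 + j$, $\nu_{j+1} \le j$, and $\ell(\nu) \le R_2$. Unwinding $c_{\gamma_1,\gamma_2}^{\mu^\dagger} c_{\gamma_1,\gamma_2}^{\nu'} c_{\nu',\beta}^{\nu} \ne 0$ gives $|\gamma_1| + |\gamma_2| = |\mu^\dagger|$, $|\nu| = |\mu^\dagger| + |\beta|$, the containments $\gamma_1, \gamma_2 \subseteq \mu^\dagger$ and $\gamma_1, \gamma_2, \beta \subseteq \nu$ (hence $\ell(\gamma_1), \ell(\gamma_2) \le R_2$), the dominance relations $\mu^\dagger \preceq \gamma_1 + \gamma_2$ and $\nu \preceq \gamma_1 + \gamma_2 + \beta$, and the per-row Littlewood--Richardson bounds $\nu'_t \le (\gamma_1)_t + (\gamma_2)_1$ and $\nu'_t \le (\gamma_2)_t + (\gamma_1)_1$.

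For (ii): if $i_1 = 0$ the claim is just $|\nu| \ge j(k_2+j)$, immediate from $\nu_1 \ge \cdots \ge \nu_j \ge k_2 + j$; and if $2j \ge R_1 + i_1$ then the same bound $|\nu| \ge j(k_2+j)$ suffices, because $j(k_2+j) \ge jk_2 + i_1 R_1 + (i_1-j)^2$ reduces to $(2j - i_1)i_1 \ge i_1 R_1$, i.e. $2j - i_1 \ge R_1$. So assume $i_1 \ge 1$ and $2j < R_1 + i_1$, in particular $j < R_1 + i_1$. For (iii): split $|\nu| = \sum_{i \le j}\nu_i + \sum_{i > j}\nu_i$. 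The first sum is $\ge j(k_2+j)$. For the second, combine $\nu \supseteq \gamma_1$, $\nu \supseteq \gamma_2$, the bound $(\gamma_\ell)_i \le \min(\mu^\dagger_i, \nu_i) \le \min(\mu^\dagger_i, j)$ for $i > j$ (using $\nu_{j+1} \le j$), the mass identity $|\gamma_1| + |\gamma_2| = |\mu^\dagger|$, and the rectangle estimate $\sum_{j < i \le R_1 + i_1}\mu^\dagger_i \ge i_1(R_1 + i_1 - j)$; also feed in $\sum_{i \le j}\nu_i \le \sum_{i \le j}\bigl[(\gamma_1)_i + (\gamma_2)_i + \beta_i\bigr]$, which forces $|\beta| \ge j(k_2+j) - \sum_{i \le j}\bigl[(\gamma_1)_i + (\gamma_2)_i\bigr]$. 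The content is a tradeoff: the enormous $j$-th row of $\nu$ must be accounted for either out of $\beta$ --- in which case $|\nu| = |\mu^\dagger| + |\beta|$ is large --- or out of the top $j$ rows of $\gamma_1, \gamma_2$, which are bounded by the top $j$ rows of $\mu^\dagger$ and, being part of $|\mu^\dagger|$, cannot be large while also leaving the rectangle mass $i_1(R_1 + i_1 - j)$ ``for free'' below row $j$. Making this precise by a case split on $\sum_{i \le j}\mu^\dagger_i$ relative to $i_1 j$ and to $\tfrac12 j(k_2+j)$, and separately on whether $i_1 \le j$ or $j < i_1$, using the per-row bounds to limit how mass in the top rows of $\gamma_1, \gamma_2$ can be placed, should yield $|\nu| \ge jk_2 + i_1 R_1 + (i_1-j)^2$ in every case.

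The main obstacle is step (iii): the two naive lower bounds $|\nu| \ge j(k_2+j)$ and $|\nu| \ge |\mu^\dagger| \ge i_1(R_1+i_1)$ never add up to the target $jk_2 + i_1 R_1 + (i_1-j)^2$, so one genuinely has to intertwine the rectangular structure of $\mu^\dagger$ with the ``spike'' structure of $\nu$ (rows $\le j$ enormous, rows $> j$ of size $\le j$), and the only bridge between them is the chain of containments and dominances through $\gamma_1, \gamma_2, \nu', \beta$. The bound is sharp --- equality occurs, for instance, when $\mu^\dagger$ is a hook and $i_1 = j = 1$ --- so no slack may be discarded, and although each individual case is elementary, the case analysis itself seems unavoidable.
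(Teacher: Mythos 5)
Your step (i) and the two easy reductions in step (ii) are correct (for $i_1=0$ the bound collapses to $|\nu|\geq j(k_2+j)$, and for $2j\geq n_1-k_1+i_1$ the term $i_1(n_1-k_1+i_1-2j)$ is nonpositive, so $|\nu|\geq j(k_2+j)$ again suffices). But the heart of the matter is your step (iii), and there you have only described a strategy --- ``a case split on $\sum_{i\leq j}\mu^{\dagger}_i$ \dots should yield the bound in every case'' --- without carrying it out. Since you yourself identify this as the main obstacle and correctly observe that the two naive bounds $|\nu|\geq j(k_2+j)$ and $|\nu|\geq i_1(n_1-k_1+i_1)$ do not combine to give the target, the proof is not complete: the one step that requires a genuine idea is the one left unexecuted, and it is not evident that the proposed case analysis closes.

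The missing bridge is simpler than a case analysis and runs through the first $2j$ rows of $\mu^{\dagger}$ rather than through a decomposition of $|\nu|$. From $c_{\gamma_1,\gamma_2}^{\mu^{\dagger}}\neq 0$ one gets $\mu^{\dagger}\succeq \gamma_1\cup\gamma_2$, hence $\sum_{k=1}^{2j}\mu^{\dagger}_k \geq \sum_{i=1}^{j}(\gamma_1)_i+\sum_{i=1}^{j}(\gamma_2)_i$; from $c_{\gamma_1,\gamma_2}^{\nu'}\neq 0$ one gets $\nu'\preceq\gamma_1+\gamma_2$, hence the right side is $\geq \sum_{i=1}^{j}\nu'_i$; and from $c_{\nu',\beta}^{\nu}\neq 0$ together with $\nu_j\geq k_2+j$ one gets $\sum_{i=1}^{j}\nu'_i\geq \sum_{i=1}^{j}\nu_i-|\beta|\geq j(k_2+j)-|\beta|$. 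Combining this with your rectangle estimate ($\mu^{\dagger}_t\geq i_1$ for all $t\leq n_1-k_1+i_1$, from \eqref{eq:mui1}) applied to the rows \emph{below} row $2j$ gives
\[
|\mu|\;\geq\;\sum_{k=1}^{2j}\mu^{\dagger}_k+i_1(n_1-k_1+i_1-2j)\;\geq\;j(k_2+j)-|\beta|+i_1(n_1-k_1+i_1-2j)\;=\;jk_2+i_1(n_1-k_1)+(i_1-j)^2-|\beta|,
\]
and adding $|\alpha|+|\beta|$ finishes the proof uniformly, with no case split. The single observation you are missing is to compare the top $j$ rows of $\gamma_1$ and of $\gamma_2$ against the top $2j$ (not $j$) rows of $\mu^{\dagger}$, which is exactly what lets the ``spike'' of $\nu$ and the rectangle of $\mu^{\dagger}$ be charged to disjoint parts of $|\mu|$.
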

\begin{proof}
Write $\alpha + \beta$ for the partition with $(\alpha + \beta)_i = \alpha_i + \beta_i$, and write $\alpha \cup \beta$ for the partition obtained by stacking the Young diagrams of $\alpha$ and $\beta$ and sorting the rows according to length. As explained in Proposition 3.2.1 of \cite{Marian_Oprea_Sam_2023}, Littlewood-Richardson rules imply that $\nu'$ is dominated by $\alpha + \beta$, and $\mu^{\dagger}$ is dominated by $\alpha \cup \beta$. Therefore
\[
	\mu_1^{\dagger} + \cdots + \mu_{2j}^{\dagger} \geq \alpha_1 + \cdots \alpha_j + \beta_1 + \cdots + \beta_j \geq \nu'_1 + \cdots + \nu'_j.
\]
But since $\nu_j \geq k_2 + j$, we also have that
\[
	\nu_1' + \cdots + \nu_j' \geq \nu_1 + \cdots + \nu_j - |\beta| \geq j(k_2 + j) - |\beta|.
\]
Recall also from \eqref{eq:mui1} that $\mu_{i_1} \geq n_1-k_1+i_1$. So 
\begin{align*}
|\mu| &\geq \sum_{k = 1}^{2j} \mu_k^{\dagger} +  i_1(n_1-k_1+i_1-2j) \\ 
& \geq j(k_2 + j) - |\beta| + i_1(n_1-k_1+i_1-2j) \\ 
& \geq jk_2 + i_1(n_1+k_1) + (i_1-j)^2 - |\beta|.
\end{align*}
Adding $|\alpha| + |\beta|$ to both sides yields the desired claim.
\end{proof}
Now Proposition \ref{prop:lowerboundforsumofparts} and Proposition \ref{prop:upperboundforcohdeg} together show that $|\mu| + |\alpha| + |\beta| \geq D$, which proves Proposition \ref{prop:vanishingGrassmannian} when $\mu \neq \emptyset$. 
It remains to prove Proposition \ref{prop:vanishingGrassmannian} when $\mu = \emptyset$. In this case,
$
	\cV_{\emptyset, \alpha, \beta} = \bfS_{\alpha}\cB_1^{\vee} \boxtimes \bfS_{\beta}\cB_2^{\vee}.
$
The same analysis of $\bfS_{\nu}\cB_{2}^{\vee}$ in the previous case applies. If $\alpha$, $\beta$ are both non-empty, the bundle $\cV_{\emptyset, \alpha, \beta}$ has cohomology if and only if there exist $1 \leq i \leq n_1-k_1$, $1 \leq j \leq n_2-k_2$, such that $\alpha_i \geq k_1 + i$, $\alpha_{i+1} \leq i$, $\beta_{j} \geq k_2 + j$, $\beta_j \leq j$. If this is the case, then $\cV_{\emptyset, \alpha, \beta}$ has cohomology in degree $D = ik_1 + jk_2$. Now
$
	|\alpha| + |\beta| \geq i(k_1+i) + j(k_2+j) > D.
$
The argument in the case where only one of $\alpha$, $\beta$ is nonempty is completely analogous. This concludes the proof of Proposition \ref{prop:vanishingGrassmannian} and the proof of Theorem \ref{thm:rational}.

\printbibliography

\end{document}